\numberwithin{equation}{section}
 \renewcommand*{\backrefalt}[4]{%
    \ifcase #1%
     \or (page:~#2)%
     \else (pages:~#2)%
    \fi%
    }
\newcommand{\pred}[1]{\boldsymbol{1}\left\{#1\right\}}
\newcommand{\R}{\mathbb{R}}
\newcommand{\C}{\mathbb{C}}
\newcommand{\F}{\mathbb{F}}
\newcommand{\N}{\mathbb{N}}
\newcommand{\unit}{\boldsymbol{1}}
\newcommand{\identity}{\boldsymbol{I}}
\newcommand{\zero}{\boldsymbol{0}}
\newcommand{\trn}{^\intercal}
\DeclareMathOperator{\diag}{diag}
\DeclareMathOperator{\Tr}{Tr}
\newcommand{\expo}[1]{\exp\left( #1 \right)}
\newcommand{\abs}[1]{\left| #1 \right|}
\newcommand{\paren}[1]{\left( #1 \right)}
\newcommand{\nrm}[1]{\left\Vert #1 \right\Vert}
\newcommand{\vertiii}[1]{{\left\vert\kern-0.25ex\left\vert\kern-0.25ex\left\vert #1 
    \right\vert\kern-0.25ex\right\vert\kern-0.25ex\right\vert}}
\newcommand{\tv}[1]{\nrm{#1}_{\mathsf{TV}}}
\DeclarePairedDelimiter\ceil{\lceil}{\rceil}
\DeclarePairedDelimiter\floor{\lfloor}{\rfloor}
\newcommand{\M}{\mathcal{M}}
\newcommand{\mc}{\boldsymbol{M}}
\newcommand{\bpi}{\boldsymbol{\pi}}
\newcommand{\bmu}{\boldsymbol{\mu}}
\newcommand{\estmc}{\widehat{\mc}} %
\newcommand{\estbpi}{\widehat{\bpi}}
\newcommand{\estpi}{\widehat{\pi}}
\newcommand{\sg}{\gamma}
\newcommand{\asg}{\gamma_\star}
\newcommand{\pssg}{\sg_{\mathsf{ps}}}
\newcommand{\estpssg}{\widehat{\sg}_{\mathsf{ps}}}
\newcommand{\Dpi}{\boldsymbol{D}_{\bpi}}
\newcommand{\Lsym}{\boldsymbol{L}}
\newcommand{\tmix}{t_{\mathsf{mix}}}
\newcommand{\Smc}{\mathcal{S}_d}
\newcommand{\smc}{\boldsymbol{S}}
\newcommand{\X}{\boldsymbol{X}}
\newcommand{\PR}[2][]{\mathbf{P}_{#1}\left( #2 \right)}
\newcommand{\E}[2][]{\mathbf{E}_{#1}\left[ #2 \right]}
\newcommand{\Var}[2][]{\mathbf{Var}_{#1} \left[ #2 \right]}
\newcommand{\dist}{\boldsymbol{D}}
\newcommand{\eps}{\varepsilon}
\newcommand{\ub}{_{\textrm{{\tiny \textup{UB}}}}}
\newcommand{\lb}{_{\textrm{{\tiny \textup{LB}}}}}
\newcommand{\G}{\mathcal{G}}
\newcommand{\kl}[2]{D_{\textrm{\textup{KL}}}
  \left(#1 \middle| \middle| #2\right)}
\newcommand{\beq}{\begin{eqnarray*}}
\newcommand{\eeq}{\end{eqnarray*}}
\newcommand{\beqn}{\begin{eqnarray}}
\newcommand{\eeqn}{\end{eqnarray}}
\newcommand{\Y}{\boldsymbol{Y}}
\newcommand{\W}{\boldsymbol{W}}
\newcommand{\estasg}{\widehat{\gamma}_\star}
\newcommand{\estDpi}{\widehat{\boldsymbol{D}}_{\bpi}}
\newcommand{\EE}[1]{\mathbf{\mathcal{E}}_{#1}}
\newcommand{\pssgK}{\sg_{\mathsf{ps} [K]}}
\newcommand{\estpssgK}{\widehat{\sg}_{\mathsf{ps} [K]}}
\newcommand{\Nmax}{N_{\max}}
\newcommand{\Nmin}{N_{\min}}
\newcommand{\bu}{\boldsymbol{u}}
\newcommand{\bv}{\boldsymbol{v}}
\newcommand{\estginv}{\widehat{\boldsymbol{A}}^{\#}}
\newcommand{\kps}{{k_{\mathsf{ps}}}}
\newcommand{\eqdef}{\doteq}
\newcommand{\eqset}{{:=}}
\newcommand{\compt}[1]{\overline{#1}}
\newcommand{\pimin}{\pi_\star}
\newcommand{\estpimin}{\hat{\pi}_\star}
\newcommand{\rev}{^{\dagger}}
\newcommand{\bigO}{\mathcal{O}}
\newcommand{\hide}[1]{}
\newcommand{\col}{{\textrm{{\tiny \textup{col}}}}}
\newcommand{\row}{{\textrm{{\tiny \textup{row}}}}}
\newcommand{\mydiv}[2]{{#2}\equiv0\pmod{#1}}
\newtheorem{theorem}{Theorem}[section]
\newtheorem{lemma}{Lemma}[section]
\newtheorem{corollary}{Corollary}[section]
\newtheorem{remark}{Remark}[section]
\newcommand{\set}[1]{\left\{ #1 \right\}}
\title{Estimating the Mixing Time of Ergodic Markov Chains}
\author{G. Wolfer, A. Kontorovich} %
\begin{document}

\maketitle

\begin{abstract}%
We address the problem of estimating the mixing time $\tmix$ of an arbitrary ergodic finite Markov chain from a single trajectory of length $m$. The reversible case was addressed by \citet{hsu-mixing-ext}, who left the general case as an open problem. In the reversible case, the analysis is greatly facilitated by the fact that the Markov operator is self-adjoint, and Weyl's inequality allows for a dimension-free perturbation analysis of the empirical eigenvalues. As \citeauthor{hsu-mixing-ext} point out, in the absence of reversibility (and hence, the non-symmetry of the pair probabilities matrix), the existing perturbation analysis has a worst-case exponential dependence on the number of states $d$. Furthermore, even if an eigenvalue perturbation analysis with better dependence on $d$ were available, in the non-reversible case the connection between the spectral gap and the mixing time is not nearly as straightforward as in the reversible case. Our key insight is to estimate the {\em pseudo-spectral gap} instead, which allows us to overcome the loss of self-adjointness and to achieve a polynomial dependence on $d$ and the minimal stationary probability $\pimin$. Additionally, in the reversible case, we obtain simultaneous nearly (up to logarithmic factors) minimax rates in $\tmix$ and precision $\eps$, closing a gap in \citeauthor{hsu-mixing-ext}, who treated $\eps$ as constant in the lower bounds. Finally, we construct fully empirical confidence intervals for the pseudo-spectral gap, which shrink to zero at a rate of roughly $1/\sqrt m$, and improve the state of the art in even the reversible case.
\end{abstract}

\section{Introduction}

We address the problem of estimating the mixing time $\tmix$ of
a
Markov chain with transition probability matrix $\mc$
from a single trajectory of observations.
Approaching the problem from a minimax perspective,
we construct point estimates
as well as fully empirical
confidence intervals for
$\tmix$ (defined in (\ref{definition:tmix})) of an unknown
{ergodic finite state time homogeneous} Markov chain.

It is a classical result \citep{levin2009markov} that
the mixing time of
an ergodic and reversible Markov chain
is controlled by its
\emph{absolute spectral gap} $\asg$
and minimum stationary probability $\pimin$:
\begin{equation}
\label{eq:control-tmix-reversible}
\left( \frac{1}{\asg} - 1 \right) \ln{2} \leq \tmix \leq\frac{\ln \left( 4/\pimin \right)}{\asg} ,
\end{equation}
which \citet{hsu2015mixing} leverage to estimate $\tmix$
in the reversible case.
For non-reversible Markov chains,
the relationship between the spectrum and the mixing time is not nearly as straightforward.
Any eigenvalue $\lambda\neq1$ provides a lower bound on the mixing time
\citep{levin2009markov},
\begin{equation}
\left( \frac{1}{1 - \abs{\lambda}} - 1\right)\ln{2} \leq \tmix,
\end{equation}
and upper bounds may be obtained in
terms of the spectral gap of the
\emph{multiplicative reversiblization}
\citep{fill1991eigenvalue, montenegro2006mathematical},
\begin{equation}
\tmix \leq  \frac{2}{\sg\left(\mc \rev \mc\right)} \ln\left( 2\sqrt{\frac{1 - \pimin}{\pimin}} \right).
\end{equation}
Unfortunately, the latter estimate is far from sharp\footnote{
  Consider the Markov chain on $\set{1, 2, 3}$
  with the transition probability matrix
  $\mc = \left(\begin{smallmatrix}0 & 1 & 0 \\ 0 & 0 & 1 \\ 1/2 & 0 & 1/2\end{smallmatrix}\right)$,
    which is rapidly mixing despite having $\sg(\mc \rev \mc ) = 0$ \citep{montenegro2006mathematical}.
  }.
A more delicate quantity, the \emph{pseudo-spectral gap}, $\pssg$
(formally defined in \eqref{definition:pssg})
was introduced by \citet{paulin2015concentration}, who showed that for ergodic $\mc$,
\begin{equation}
\label{eq:control-tmix-non-reversible}
  \frac{1}{2\pssg}\leq \tmix \leq \frac{1}{\pssg} \left(
  \ln \frac{1}{\pimin} 
  + 2 \ln 2 +1 \right).
\end{equation}
Thus, $\pssg$
plays a role analogous to that of $\asg$ in the reversible case,
in that it
controls the mixing time of non-reversible chains from above and below
--- and will be our main quantity of interest throughout the paper.

\section{Main results}

Here we give an informal overview of our main results, all of which pertain to
an unknown $d$-state ergodic time homogeneous Markov chain $\mc$
with mixing time $\tmix$ and minimum stationary probability $\pimin$.
The formal statements are deferred to 
Section~\ref{section:formal-results}. {\em Sample complexity} refers
to the trajectory length of observations drawn from $\mc$.

\begin{enumerate}
\item We determine the minimax sample complexity of estimating $\pimin$
  to within a relative error of $\eps$
  to be $\tilde{\Theta} \left( \frac{\tmix}{\eps^2 \pimin} \right)$. This improves
  the state of the art even in the reversible case.
\item
  We upper bound the sample complexity
  of estimating the \emph{pseudo-spectral gap} $\pssg$ of
  any ergodic $\mc$
  to within a absolute error of $\eps$
by
$\tilde \bigO \left(  \frac{\max \set{\tmix, \mathcal{C}(\mc)}}{\pimin \eps^2} \right)$,
where $1\le\mathcal{C}(\mc)\le d/\pimin$
captures a notion of how far
$\mc$
is
from being doubly-stochastic.
\item
  We lower bound the sample complexity
  of estimating $\tmix$ by
  $\tilde \Omega \left( \frac{\tmix d}{\eps^2} \right)$,
  which holds for both the reversible and non-reversible cases.
  This shows that our upper bound is sharp in $\eps$,
  up to logarithmic factors.
\item We construct fully empirical confidence intervals for $\pimin$ and $\pssg$
  without assuming reversibility.
\item Finally, our analysis
narrows the width of the confidence intervals and improves their
computational cost as compared to the state of the art.
\end{enumerate}

\section{Related work}

\sloppypar Our work is largely motivated by PAC-type learning problems with dependent data.
Many results from statistical learning and empirical process theory have been extended to
dependent data
with
sufficiently rapid mixing
\citep{yu1994rates, karandikar2002rates, gamarnik2003extension, mohri2008stability, mohri2009rademacher, steinwart2009fast, steinwart2009learning, me-cosma-nips, wolfer2018minimax}.
These have been used to
provide generalization guarantees that depend on the possibly unknown mixing properties of the process.
In the Markovian setting,
the relevant quantity is usually the mixing time,
and therefore
empirical estimates of this quantity yield corresponding data-dependent generalization bounds.

Other applications include MCMC diagnostics for non-reversible chains, which have recently gained interest through {accelerations} methods. Chains generated by the classical Metropolis-Hastings are reversible,
which
is instrumental in
analyzing the stationary distribution.
However, non-reversible chains may enjoy better {mixing properties} as well as improved asymptotic variance.
For theoretical and experimental results in this direction, see \citet{hildebrand1997rates, chen1999lifting, diaconis2000analysis, neal2004improving, sun2010improving, suwa2010markov, turitsyn2011irreversible,  chen2013accelerating, vucelja2016lifting}.

The problem of obtaining empirical estimates on the mixing time (with confidence)
was first addressed in \citet{hsu2015mixing} for the reversible setting,
by reducing the task to one of
estimating the absolute spectral gap and minimum stationary probability.
\citeauthor{hsu2015mixing}
gave
a point estimator for the {absolute spectral gap}, up to
fixed relative error,
with sample complexity
between
$\Omega \left(\frac{d}{\asg} + \frac{1}{\pimin} \right)$
and
$\tilde{\bigO} \left( \frac{1}{\asg^3 \pimin} \right)$.
Following up,
\citet{levin2016estimating}
sharpened the
upper bound of $\tilde{\bigO} \left( \frac{1}{\asg \pimin} \right)$, again leveraging properties of the {absolute spectral gap} of the unknown chain.
Additionally, \citet{hsu2015mixing}
presented a point estimator for
$\pimin$
with sufficient sample complexity
$\tilde{\bigO} \left( \frac{1}{\pimin \asg} \right)$.
The state of the art, as well as our improvements, are summarized in
Table~\ref{table:comparison}.

More recently, also in the reversible setting, algorithmic improvements in a different sampling model were obtained
\citep{qin2017estimating, combes2018computationally}, for which new estimation techniques,
derived from the \emph{power iteration method} were introduced; these
focus on computational rather than statistical efficiency.
Our stringent one-sequence sampling model is at the root of many of the technical challenges encountered;
allowing, for example, access to a 
\emph{restart mechanism} of the chain from any given state
considerably simplifies the estimation problem
\citep{batu2000testing, batu2013testing, bhattacharya2015testing}.

\section{Notation and definitions}

We define $[d] \eqdef \set{1,\ldots,d}$,
denote
the simplex of all distributions over $[d]$
by $\Delta_d$, and the collection of all $d\times d$ row-stochastic matrices by $\M_d$.
$\mathbb{N}$ will refer to $\set{1, 2, 3, \dots}$, and in particular
$0\notin\mathbb{N}$.
For $\bmu\in\Delta_d$, we will write either $\bmu(i)$ or $\mu_i$, as dictated by convenience. All vectors are rows unless indicated otherwise. A Markov chain on $d$ states being entirely specified by an initial distribution $\bmu\in\Delta_d$ and a row-stochastic transition matrix $\mc\in\M_d$, we identify the chain with the pair $(\mc,\bmu)$. Namely, by $(X_1,\ldots,X_m)\sim(\mc,\bmu)$, we mean that
\begin{equation}
\PR{(X_1,\ldots,X_m)=(x_1,\ldots,x_m)}=\bmu(x_1)\prod_{t=1}^{m-1}\mc(x_t,x_{t+1}).
\end{equation}
We write $\PR[\mc,\mu]{\cdot}$ to denote probabilities over sequences induced by the Markov chain $(\mc,\bmu)$,
and omit the subscript when it is clear from context. 

\paragraph{Skipped chains and associated random variables.}

For a Markov chain $X_1, \dots X_m \sim (\mc, \bmu)$, for any $k \in \mathbb{N}$ and $r \in \set{0, \dots, k - 1}$ we can define the $k$-skipped $r$-offset Markov chain, 
$$X_{r + k}, X_{r + 2k}, \dots, X_{r + t k}, \dots, X_{r + \floor{(m-r)/k} k} \sim (\mc^k, \bmu \mc^{r}),$$
and we will write it $X_1^{(k, r)}, \dots, X_{\floor{(m-r)/k}}^{(k, r)}$, or more simply $X_1^{(k)}, \dots, X_{\floor{m/k}}^{(k)}$ when $r = 0$. The main random quantities in use throughout this work are now defined for clarity. For two states $i$ and $j$, a skipping rate $k$, and an offset $r$, we define the \emph{number of visits to state $i$} to be
\begin{equation}
\label{equation:def-number-of-visits}
N_{i}^{(k, r)} \eqdef \abs{\set{1 \leq t \leq \floor{(m - r)/k} - 1: X_{t}^{(k, r)} = i}}, \quad N_{i}^{(k)} \eqdef N_{i}^{(k, 0)}
\end{equation}
and the \emph{number of transitions from $i$ to $j$} to be
\begin{equation}
\label{equation:def-number-of-transitions}
N_{i j}^{(k, r)} \eqdef \abs{\set{1 \leq t \leq \floor{(m-r)/k} - 1: X_t^{(k, r)} = i, X_{t+1}^{(k, r)} = j}}, \quad N_{i j}^{(k)} \eqdef N_{i j}^{(k, 0)}.
\end{equation}
We will also use the shorthand notation
\begin{equation}
\label{equation:def-random-variable-convenient-notation}
N_{i} \eqdef N_{i}^{(1)}, \ N_{i j} \eqdef N_{i j}^{(1)}, \ 
\Nmax^{(k)} \eqdef \max_{i \in [d]} N_i^{(k)}, \ \Nmin^{(k)} \eqdef \min_{i \in [d]} N_i^{(k)}, \ \Nmax \eqdef \Nmax^{(1)}, \ \Nmin \eqdef \Nmin^{(1)}.
\end{equation}

\paragraph{Stationarity.}

The Markov chain $(\mc,\bmu)$ is {\em stationary} if
$\bmu=\bmu\mc$ (i.e. $\bmu$ is a left-eigenvector associated to the eigenvalue $1$).
Unless noted otherwise, $\bpi$ is assumed to be a stationary distribution of the Markov chain in context.
We also define $\Dpi \eqdef \diag(\bpi)$, the diagonal matrix whose entries correspond to the stationary distribution, i.e. $\bpi = \unit \cdot \Dpi$, with
$\unit = (1, \dots, 1)$.

\paragraph{Ergodicity.}

The Markov chain $(\mc,\bmu)$ is {\em ergodic} if $\mc^k > 0$ (entry-wise positive) for some $k\ge1$. If $\mc$ is ergodic, it has a unique stationary distribution $\bpi$ and moreover $\pimin>0$, where $\pimin = \min_{i\in[d]}\pi_i$ is called the \emph{minimum stationary probability}.
We henceforth only consider ergodic chains.

\paragraph{Mixing time.}

When the chain is ergodic, we can define its mixing time as the number of steps it
requires to converge
to its stationary distribution within a constant precision (traditionally taken to be $1/4$):
\begin{equation}
\label{definition:tmix}
\tmix \eqdef \min_{t \in \N} \set{ \sup_{\bmu \in \Delta_{d}} \tv{\bmu \mc^{t-1} - \bpi} \leq \frac{1}{4} }
.
\end{equation}

\paragraph{Reversibility.}

A {\em reversible} $\mc\in\M_d$ satisfies {\em detailed balance} for some distribution $\bmu$: for all $i,j\in[d]$, $\mu_i\mc(i,j)=\mu_j\mc(j,i)$ --- in which case $\bmu$ is necessarily the unique stationary distribution. The eigenvalues of a reversible $\mc$ lie in $(-1,1]$, and these may be ordered (counting multiplicities): $1=\lambda_1\ge\lambda_2\ge\ldots\ge\lambda_d$. The \emph{spectral gap} and \emph{absolute spectral gap} are respectively defined to be 
\begin{equation}
\label{definition:asg}
\sg \eqdef 1-\lambda_2(\mc), \qquad \asg \eqdef 1 - \max \set{\lambda_2(\mc), \abs{\lambda_d(\mc)}}.\end{equation}
It follows from above that whenever $\mc$ is \emph{lazy} ($\forall i \in [d], \mc(i,i) \geq 1/2$), all eigenvalues of $\mc$ are positive and $\sg = \asg$. The matrix $\Dpi \mc$ consists of the \emph{doublet probabilities} associated with $\mc$ and is symmetric when $\mc$ is reversible.
The {\em rescaled transition matrix},
\beqn
\label{eq:Ldef}
\Lsym \eqdef \Dpi^{1/2}  \mc \Dpi^{-1/2},
\eeqn
is also symmetric for reversible $\mc$ (but not in general).
Since
$\Lsym$ and $\mc$ are similar matrices, their eigenvalue systems are identical.

\paragraph{Non-reversibility.}

Chains that do not satisfy the detailed balanced equations are said to be non-reversible. In this case, the eigenvalues
may be complex,
and the transition matrix may not be diagonalizable, even over $\C$.
\citet{paulin2015concentration} defines the {\em pseudo-spectral gap} by 
\begin{equation}
\label{definition:pssg}
\pssg \eqdef \max_{k \in \N} \set{ \frac{\sg((\mc \rev)^k \mc^k)}{k} },
\end{equation}
where $\mc \rev$ is the \emph{time reversal} of $\mc$, given by $\mc \rev (i,j) \eqdef \bpi(j)\mc(j,i)/\bpi(i)$ and $\sg$ is defined at (\refeq{definition:asg}); the expression $\mc \rev \mc$ is called the \emph{multiplicative reversiblization} of $\mc$.
The chain
$\mc \rev \mc$ is always reversible, and its eigenvalues are all real and non-negative  \citep{fill1991eigenvalue}. We also denote by $\kps$ the smallest positive integer
such that $\pssg = \frac{\sg \left( \left(\mc \rev \right)^\kps \mc ^\kps\right)}{\kps}$; this is
the power of $\mc$ for which the multiplicative reversiblization achieves\footnote{Note that for ergodic chains the pseudo-spectral is always achieved for finite $k$ so that $\kps$ is properly defined.  
Indeed, writing $g(k) \mapsto \sg((\mc^k)\rev \mc^k)/k$, it is a fact that $0 \leq g(k) \leq 1/k$.}
its pseudo-spectral gap. Intuitively, the pseudo-spectral gap is a generalization of the multiplicative reversiblization approach of \citeauthor{fill1991eigenvalue}, and for a reversible chain, the pseudo-spectral and absolute spectral gap are similar within a multiplicative factor of 2 (Lemma~\ref{lemma:pssg-reversible}).

\paragraph{Norms and metrics.}

We use the standard $\ell_1, \ell_2$ norms $\nrm{z}_p=\paren{\sum_{i\in[d]}|z_i|}^{1/p}
$;
in the context of distributions (and up to a convention-dependent factor of $2$),
$p=1$
corresponds to the total variation norm. For $A \in \R^{d\times d}$, define the spectral radius $\rho(A)$ to be the largest absolute value of the eigenvalues of $A$, and recall the following operator norms for real matrices,
\begin{equation}
\nrm{A}_\infty = \max_{i\in[d]}\sum_{j\in[d]}|A(i,j)|, \quad \nrm{A}_1 = \max_{j\in[d]}\sum_{i\in[d]}|A(i,j)|, \quad \nrm{A}_2 = \sqrt{\rho(A \trn A)}.
\end{equation}
We denote by $\langle \cdot, \cdot \rangle_{\bpi}$ the inner product on $\R^{d}$ defined by
$\langle \mathbf{f}, \mathbf{g} \rangle_{\bpi} \eqdef \sum_{i \in [d]} \mathbf{f}(i)\mathbf{g}(i)\bpi(i)$,
and
write $\nrm{\cdot}_{2, \bpi}$ for its associated norm; $\ell^2(\bpi)$ is the resulting Hilbert space.
To any
$(\mc,\bmu)$, we also associate
\begin{equation}
\nrm{\bmu/\bpi}_{2, \bpi}^2 \eqdef \sum_{i \in [d]} \mu_i^2/\pi_i,
\end{equation}
which
provides a notion of
``distance from  stationarity''
and
satisfies
$\nrm{\bmu/\bpi}_{2, \bpi} \le {1}/{\pimin}$.
For two $[d]$-supported distributions $\dist = (p_1, \dots, p_d)$ and $\dist' = (q_1, \dots, q_d)$, we also define respectively the Hellinger distance and the KL divergence,
\begin{equation}
H(\dist, \dist') = \frac{1}{\sqrt{2}} \sqrt{\sum_{i \in [d]} (\sqrt{p_i} - \sqrt{q_i})^2} \quad \kl{\dist}{\dist'} = \sum_{i \in [d]} p_i \ln\left( \frac{p_i}{q_i} \right).
\end{equation}

\paragraph{Asymptotic notation.}

We use standard $\bigO(\cdot)$, $\Omega(\cdot)$ and $\Theta(\cdot)$ order-of-magnitude notation, as well as their tilde variants $\tilde \bigO(\cdot)$, $\tilde\Omega(\cdot)$, $\tilde\Theta(\cdot)$ where lower-order log factors of any variables are suppressed. Note that as we are giving finite sample fully empirical bounds with explicit multiplicative constants, the logarithm base is relevant and we write $\ln \left( \cdot \right)$ for the natural logarithm of base $e$.

\section{Formal statement of the results}
\label{section:formal-results}

Unless otherwise specified, the results in this section are all summarized in terms of learning the parameters of interest up to some $\eps$
relative error and confidence $\delta$. For absolute error point estimation learning bounds, we refer the reader to Section~\ref{section:full-proofs}.

\begin{theorem}[Minimum stationary probability estimation upper bound (relative error)]
\label{theorem:pi-star-ub}
There exists an estimator $\estpimin$ which, for all $0 < \eps < 1$, $0 < \delta < 1 $, satisfies the following. If $\estpimin$ receives as input a sequence $\X=(X_1,\ldots,X_m)$ of length at least $m\ub$ drawn according to an unknown $d$-state Markov chain $(\mc,\bmu)$ with minimal stationary probability $\pimin$ and pseudo-spectral gap $\pssg$, then $\abs{\estpimin - \pimin} < \eps \pimin$ holds with probability at least $1 - \delta$. The sample complexity is upper-bounded by
\begin{equation}
  \label{eq:ubpi}
\begin{split}
  m \ub
  &\eqset \cfrac{C_\star}{\pssg \eps^2 \pimin} \ln{\left( \frac{d}{\delta} \sqrt{\pimin^{-1}} \right)},
\end{split}
\end{equation}
where $C_\star$ is a universal constant.
\end{theorem}

From this theorem, and the fact that for reversible Markov chains the absolute and pseudo-spectral gaps are within a constant multiplicative factor (see Lemma~\ref{lemma:pssg-reversible}), we immediately recover the point estimation upper bound
for the minimum stationary probability provided by \citet{hsu2015mixing} for reversible chains.

\begin{theorem}[Minimum stationary probability estimation lower bound (relative error)]
\label{theorem:pi-star-lb}
Let $d \in \N, d \geq 4$. For every $0 < \eps < 1/2, 0 < \pimin < \frac{1}{d}$ and $
\pimin
<
\pssg
<
1
$, there exists a $(d+1)$-state Markov chain $\mc$ with pseudo-spectral gap $\pssg$ and minimum stationary probability $\pimin$ such that any estimator must require a sequence $\X=(X_1,\ldots,X_m)$ drawn from the unknown $\mc$ of length at least
\begin{equation}
  \label{eq:lbpi}
m\lb
\eqset
\Omega\left( \cfrac{ \ln{\left(\delta^{-1}\right)}}{\pssg  \eps^2 \pimin}\right),
\end{equation}
for $\abs{\estpimin - \pimin} < \eps \pimin$ to hold with probability $\ge1-\delta$.
\end{theorem}

The upper and lower bounds in (\ref{eq:ubpi}) and (\ref{eq:lbpi})
match up to logarithmic factors --- and continue to hold for the reversible case
(Lemma~\ref{lemma:pssg-reversible}).

\begin{theorem}[Pseudo-spectral gap estimation upper bound (absolute error)]
\label{theorem:pseudo-sg-ub}
There exists an estimator $\estpssg$ which, for all $0 < \eps < 1$, $0 < \delta < 1 $, satisfies the following. If $\estpssg$ receives as input a sequence $\X=(X_1,\ldots,X_m)$ of length at least $m \ub$ drawn according to an unknown $d$-state Markov chain $(\mc,\bmu)$ with minimal stationary probability $\pimin$ and pseudo-spectral gap $\pssg$,
then
$\abs{\estpssg - \pssg} < \eps$ holds with probability at least $1 - \delta$.
The sample complexity is upper-bounded by
\begin{equation}
\begin{split}
m \ub \eqset \frac{C_{\mathsf{ps}}}{\pimin \eps^2}  \max \set{ \frac{1}{\pssg},  \mathcal{C}(\mc)}  \ln \left( \frac{d \sqrt{\pimin^{-1}}}{\eps^2 \delta} \right)
\end{split}
,
\end{equation}
where $\mathcal{C}(\mc) \leq \nrm{\mc}_{\bpi}  \min \set{d, \nrm{\mc}_{\bpi}}$,  $\nrm{\mc}_{\bpi} \eqdef \max_{(i,j) \in [d]^2}\set{ \frac{\pi_i}{\pi_j} }$, and $C_{\mathsf{ps}}$ is a universal constant. 
\end{theorem}

The quantity $\mathcal{C}(\mc)$ can be thought of as a measure of ``distance to double stochasticity'' of the chain,
or a measure of non-uniformity of
its stationary distribution;
for doubly-stochastic chains, we have
$\mathcal{C}(\mc) = 1$.
The proof of the theorem provides a more delicate yet less
tractable expression
for $\mathcal{C}(\mc)$,
given in
\eqref{equation:pssg-actual-ub}.

\begin{theorem}[Pseudo-spectral gap estimation lower bound (relative error)]
\label{theorem:pseudo-sg-lb}
Let $d \in \mathbb{N}, d \geq 4$. For every $0 < \eps < 1/4, 0 < \delta < 1/(8d), 0 < \pssg < 1/8$, there exists a $d$-state Markov chains $\mc$ with pseudo-spectral gap $\pssg$ such that every estimator $\estpssg$ must require in the worst case a sequence $\X=(X_1,\ldots,X_m)$ drawn from the unknown $\mc$ of length at least
\beq
m \lb \eqset \Omega \left( \frac{d}{\pssg \eps^2} \ln \left( \frac{1}{d\delta}\right) \right),
\eeq

in order for $\abs{\estpssg - \pssg} < \eps \pssg$ to hold with probability at least $1 - \delta$.

\end{theorem}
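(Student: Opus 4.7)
The plan is to apply a standard information-theoretic minimax argument: exhibit a hard pair of ergodic $d$-state chains whose pseudo-spectral gaps differ by at least $2\eps\pssg$, yet whose length-$m$ trajectories are statistically indistinguishable when $m$ falls below the claimed threshold. The $\ln(1/\delta)$ dependence points to Le Cam's two-point reduction combined with Pinsker's inequality, and the $d$ factor must be extracted by engineering a localized perturbation that is observable only a $\Theta(1/d)$ fraction of the time.

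First I would construct two chains $\mc_0, \mc_1$ on $d$ states, both reversible, so that the pseudo-spectral gap analysis reduces (via Lemma~\ref{lemma:pssg-reversible}) to an absolute-spectral-gap analysis and $\kps = 1$. A natural template is two weakly coupled clusters of size $\Theta(d)$ connected by a single ``bridge'' pair of states with bridge transition probability of order $\pssg$, so that the pseudo-spectral gap of $\mc_0$ is of order $\pssg$ and is essentially determined by the bridge; a multiplicative $(1 \pm \eps)$ perturbation of the bridge then shifts the second eigenvalue of $\mc_1$ by $\Theta(\eps\pssg)$. I would verify this spectral shift via a direct two-by-two block computation, or via Weyl's inequality on the symmetric operator $\Lsym$ from~\eqref{eq:Ldef}. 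Crucially, the bridge states carry stationary mass $\Theta(1/d)$, so the perturbation is only ``visible'' with that probability per time step.

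Second, I would bound the KL divergence between the two trajectory laws. By the chain rule and stationarity,
\[
\kl{\PR[\mc_0]{\X = \cdot}}{\PR[\mc_1]{\X = \cdot}} \leq m \sum_{i \in [d]} \pi_i\, \kl{\mc_0(i,\cdot)}{\mc_1(i,\cdot)} + O(1),
\]
and by construction $\mc_0(i,\cdot)$ and $\mc_1(i,\cdot)$ differ only at the bridge states, with per-row KL of order $\eps^2 \pssg$ by Taylor expansion around the unperturbed transition probabilities. The total KL is therefore of order $m \eps^2 \pssg / d$. Requiring this to be $\Omega(\ln(1/\delta))$ via Le Cam's inequality then yields $m = \Omega(d \ln(1/\delta)/(\pssg \eps^2))$, matching the claim.

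The main obstacle will be reconciling two competing demands on the perturbation: it must (i) shift $\pssg$ by $\Theta(\eps\pssg)$ while (ii) remaining undetectable at the trajectory level except through a $\Theta(1/d)$ fraction of the time steps. A global perturbation changes $\pssg$ easily but inflates the per-step KL by a factor of $d$; a purely single-row perturbation is invisible but only shifts $\pssg$ by $\Theta(\eps\pssg/d)$. The resolution is a construction where a small ``bottleneck'' set with stationary mass $\Theta(1/d)$ nevertheless exerts $\Theta(1)$ influence on the spectral gap, and a careful perturbation analysis is needed to certify that the spectral shift does not dilute below $\Theta(\eps\pssg)$. Handling the definition of $\pssg$ — ruling out that some higher power $k \geq 2$ of the perturbed chain dominates via $\sg((\mc_1\rev)^k \mc_1^k)/k$ — is the final delicate step, best circumvented by arguing that for lazy reversible chains the maximizer $\kps$ equals $1$.
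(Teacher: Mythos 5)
There is a genuine gap, and it sits exactly where you flag your ``main obstacle'': the construction. In your two-balanced-cluster template, with bridge transition probability $p=\Theta(\pssg)$ and bridge states of stationary mass $\Theta(1/d)$, the chain does \emph{not} have pseudo-spectral gap $\Theta(\pssg)$: testing the Dirichlet form with the function equal to $+1$ on one cluster and $-1$ on the other gives $\sg \leq 4\pi(\text{bridge})\,p = \Theta(p/d)$, so the gap is only $\Theta(\pssg/d)$. If you rescale $p$ to $\Theta(d\pssg)$ to restore the gap (already impossible for $\pssg$ up to $1/8$ as the theorem requires, since $p\le 1$ forces $\pssg \lesssim 1/d$), then each visit to the bridge carries KL of order $d\pssg\eps^2$, the per-step KL becomes $\Theta(\pssg\eps^2)$ rather than $\Theta(\pssg\eps^2/d)$, and Le Cam yields only $m=\Omega(\ln(1/\delta)/(\pssg\eps^2))$ --- the factor $d$, which is the entire content of the theorem, is lost. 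Your proposal acknowledges this tension but resolves it only by assertion (``a small bottleneck set \dots nevertheless exerts $\Theta(1)$ influence''), which is precisely the property balanced two-cluster chains lack; moreover your dichotomy that a perturbation localized at a single low-mass state can only shift $\pssg$ by $\Theta(\eps\pssg/d)$ is false, and it is what steers the construction in the wrong direction.

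The paper's resolution is a ``trap-state'' family: one sticky state with holding probability $1-\alpha$ and uniform exits, all other states nearly uniform among themselves (doubly stochastic, reversible). The spectrum is explicit, with second eigenvalue $1-\tfrac{d}{d-1}\alpha$, so $\asg=\tfrac{d}{d-1}\alpha$ and, via Lemma~\ref{lemma:pssg-reversible}, $\pssg=\asg(2-\asg)$ --- this also disposes of the $k\ge 2$ issue in the definition of $\pssg$, much as you suggest (laziness is not needed since all relevant eigenvalues are positive). Here the slow mode is escape from a single state of mass $1/d$, so the gap equals the escape probability $\Theta(\alpha)$ while perturbing $\alpha\mapsto\alpha(1\pm\eps)$ separates the gaps by $\Theta(\eps\pssg)$ and injects only $\Theta(\alpha\eps^2/d)$ of statistical information per step. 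The paper then bounds the trajectory distance not by the KL chain rule but by Hellinger affinity through Kazakos's product formula (Lemma~\ref{lemma:mc-hellinger-recursive}), starting both chains from the normalized Perron eigenvector of the entrywise geometric-mean matrix so that $1-H^2=\rho^m$ with $\rho\ge 1-6\alpha\eps^2/(d-1)$, and concludes with Le Cam. Your KL-tensorization route would also work once the right construction is in hand (it is how the paper proves Theorem~\ref{theorem:pi-star-lb}), so the choice of divergence is not the gap; the missing idea is a construction in which the spectral gap equals the escape rate of a state of vanishing stationary mass.
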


The proof of the above result also yields the lower bound of $\tilde{\Omega} \left( \frac{d}{\asg \eps^2} \right)$ is the reversible case, and closes the minimax estimation gap, also in terms of the precision parameter $\eps$, at least for doubly-stochastic Markov chains (for which $\pimin = 1/d$), matching the upper bound of \citet{hsu-mixing-ext} up to logarithmic terms. For a comparison with existing results in the literature,
see
Table~\ref{table:comparison}.

\begin{remark} The minimax sample complexity in
$\pssg$ with relative error remains to be pinned down as our current bounds exhibit a gap. 
In the reversible case, the dependence on $\asg$ was improved from $\asg^3$ in \citet{hsu2015mixing} to
$\asg$ in \citet{levin2016estimating,hsu-mixing-ext}
via a ``doubling trick''
\citep{levin2009markov},
which
exploited
the identity $\asg(\mc^k) = 1 - (1 - \asg(\mc))^k$.
The non-reversible analogue
$\sg ( (\mc \rev )^k \mc^k ) = \sg ( (\mc \rev \mc )^k )$
in general fails,
and we leave the question as an open problem.
\end{remark}

\begin{remark}
We conjecture that the dependence on $\mathcal{C}(\mc)$ is an artifact of the current analysis and that the correct rate is $\tilde \Theta (\tmix / (\pimin \eps^2))$, similar to that of the reversible case. Indeed, suppose we take a reversible chain and lightly perturb some of the entries; a heuristic continuity argument would seem to argue against an abrupt increase in sample complexity.  
\end{remark}

\section{Empirical procedure}

The full procedure for estimating the pseudo-spectral gap is described below. For clarity, the computation of the confidence intervals is not made explicit in the pseudo-code and the reader is referred to Theorem~\ref{theorem:pssg-confidence-intervals-full} for their expression. The estimator is based on a plug-in approach. Namely, we compute an approximate pseudo-spectral gap over a prefix $[K] \subsetneq \N$, of multiplicative reversiblizations of powers of the chain, which are estimated with the natural counts based on observed skipped chains, and averaged over different possible offsets. We additionally introduce a smoothing parameter $\alpha$, which for simplicity is kept identical for each power, and whose purpose is to keep the confidence intervals and estimator properly defined even in degenerate cases. The sub-procedure that computes the spectral gap of the multiplicative reversiblization of a chain uses a few analytical shortcuts to reduce its computational cost, which are developed in depth in Section~\ref{section:reduction-to-radii}.

\begin{algorithm}[H]
 \SetAlgoFuncName{PseudoSpectralGapEstimator}{pssg}
 \SetKwData{CI}{$CI$}
 \SetKwData{N}{$\mathbf{N}$}
 \SetKwData{estimator}{$\estpssg$}
 \SetKwFunction{SpectralGapMultRev}{SpectralGapMultRev}
 \SetKwProg{Fn}{Function}{:}{}
	
	\SetKwFunction{FPSSG}{PseudoSpectralGap}
	\Fn{\FPSSG{$d$, $\alpha$, $(X_1, \dots, X_m)$, $K$}}{
		$\estimator \leftarrow 0$ \\
		\For{$k \leftarrow 1$ \KwTo $K$}{
			$\tilde{\sg}^\dagger_k \leftarrow 0$ \\
			\For{$r \leftarrow 0$ \KwTo $k - 1$}{
					$\hat{\sg}^\dagger_{k,r} \leftarrow  \SpectralGapMultRev(d, \alpha, (X_{r + k}, X_{r + 2k}, \dots, X_{r + \floor{(m-r)/k}k} ))$ \\
					$\tilde{\sg}^\dagger_k \leftarrow \tilde{\sg}^\dagger_k + \hat{\sg}^\dagger_{k,r}$
			}
			$\tilde{\sg}^\dagger_k \leftarrow \tilde{\sg}^\dagger_k / k$ \\
			\If{$\tilde{\sg}^\dagger_k / k > \estimator $}{
								$\estimator \leftarrow \tilde{\sg}^\dagger_k / k$
			}
	 }
	 \KwRet \estimator
	}
	
	\SetKwFunction{FSGMR}{SpectralGapMultRev}
	\Fn{\FSGMR{$d, \alpha, (X_1, \dots, X_n)$}}{
				 \SetKwData{N}{$\mathbf{N}$}
				 \SetKwData{estimator}{$\hat{\sg}^\dagger$}
				 \SetKwData{D}{$\mathbf{D}$}
				 \SetKwData{NT}{$\mathbf{T}$}
				 \SetKwData{NN}{$\mathbf{N}$}
				 \SetKwData{NS}{$\mathbf{S}$}
				 \SetKwData{F}{$\mathbf{F}$}
				 \SetKwData{G}{$\mathbf{G}$}
				 \SetKwFunction{SpectralRadius}{SpectralRadius}
				 \SetKwFunction{MatrixMultiply}{MatrixMultiply}
				 \SetKwFunction{Diag}{Diag}
        $\NN \leftarrow \left[ d \alpha \right]_{d}$ \\
				$\NT \leftarrow \left[ \alpha \right]_{d \times d}$ \\
				\For{$t \leftarrow 1$ \KwTo $n - 1$}{
						$\NN[X_t] \leftarrow \NN[X_t] + 1$ \\
						$\NT[X_t, X_{t+1}] \leftarrow \NT[X_t, X_{t+1}] + 1$ \\
				}
				$\NS \leftarrow \sqrt{1 / \NN}$ \\
				$\F \leftarrow \MatrixMultiply(\NS \trn, \NS) / n$ \\
				$\D \leftarrow \Diag(1 / \NN)$ \\
				$\G \leftarrow \MatrixMultiply(\NS, \NT \trn,  \D, \NT, \NS)$ \\
				$\estimator \leftarrow 1 - \SpectralRadius(\G - \F)$ \\
				\KwRet \estimator
  }

\caption{The estimation procedure outputting $\estpssg$}
\label{algorithm:pseudo-spectral-gap-estimator}
\end{algorithm}

We construct fully empirical confidence intervals whose
non-asymptotic form is deferred to
Theorem~\ref{theorem:pssg-confidence-intervals-full},
and the asymptotic behavior is summarized as follows:

\begin{theorem}[Confidence intervals, asymptotic behavior]
  \label{theorem:pssg-confidence-intervals}
	In the non-reversible case, the interval widths asymptotically behave as
\begin{equation}
\begin{split}
\abs{\estpimin - \pimin} = \tilde \bigO \left(  \frac{\sqrt{d}}{\pssg \sqrt{\pimin m} } \right), \quad  \abs{\estpssgK - \pssg} = \tilde{\bigO} \left( \frac{1}{K} + \sqrt{\frac{d}{\pimin m}}  \left( \sqrt{d} \nrm{\mc}_{\bpi}  + \frac{1}{\pssg \pimin} \right) \right),
\end{split}
\end{equation}
and in the reversible case, they asymptotically behave as
\begin{equation}
\begin{split}
\abs{\estpimin - \pimin} = \tilde{\bigO} \left( \frac{\sqrt{d}}{\asg \sqrt{\pimin m} } \right), \qquad  \abs{\estasg - \asg} = \tilde{\bigO} \left( \sqrt{\frac{d}{\pimin m}} \left( \sqrt{d} \nrm{\mc}_{\bpi} + \frac{1}{\asg \pimin} \right) \right).
\end{split}
\end{equation}
\end{theorem}

See Remark~\ref{remark:improvement-empirical-intervals}
for a discussion of how
Theorem~\ref{theorem:pssg-confidence-intervals}
improves the state of the art.

\section{Proof sketches}

In this section we,
provide proof sketches and explain the basic intuition. These are fully fleshed out in
Section~\ref{section:full-proofs}.

\subsection{Point estimation}
\subsubsection{Proof sketch of Theorem~\ref{theorem:pi-star-ub}}
We take the natural candidate
$\estpimin \eqdef \frac{1}{m}  \min_{i \in [d]} \abs{t \in [m]: X_t = i}$
as our estimator.
The proof follows along the lines of its counterpart in
\citet{hsu2015mixing} for the reversible case, with the exception that it makes use of a more general
concentration inequality from \citet{paulin2015concentration} that also applies to non-reversible Markov chains.

\subsubsection{Proof sketch of Theorem~\ref{theorem:pi-star-lb}}

To prove the claim, we first focus on the estimation problem with absolute error,
in
the regime $2 \eps < \pimin < \pssg$.
We  construct the following star-shaped class of \emph{reversible} Markov chain where
a single ``hub'' state is special,
while the remaining
``spoke''
can only transition to themselves or to the hub.
For $d \in \mathbb{N}, d \geq 4$,
\begin{equation}
\Smc = \set{ \smc_\alpha(\dist): 0 < \alpha < 1, \dist  = (p_1, \dots, p_d) \in \Delta_d } \text{ where } \smc_\alpha(\dist) = \left( \begin{smallmatrix} \alpha & (1 - \alpha) p_1 & \cdots & (1 - \alpha) p_d \\ \alpha & 1 - \alpha & \hdots & 0 \\ \vdots & \vdots & \ddots & \vdots \\ \alpha & 0 & \hdots & 1 - \alpha \end{smallmatrix} \right)
.
\end{equation}
Any chain in $\Smc$ is readily computed to have
stationary distribution $\left(\alpha, (1 - \alpha)p_1, \dots, (1 - \alpha)p_d \right)$ and pseudo-spectral
gap $\pssg(\smc_\alpha(\dist)) = \Theta\left( \alpha \right)$. 
Consider
$\dist,\dist_\eps\in\Delta_d$ defined by
\begin{equation}
\begin{split}
\dist \eqset \left(\beta , \beta, \frac{1 - 2 \beta}{d-2} , \dots, \frac{1 - 2 \beta}{d-2} \right), \dist_\eps \eqset \left(\beta + 2 \eps , \beta - 2 \eps, \frac{1 - 2 \beta}{d-2} , \dots, \frac{1 - 2 \beta}{d-2} \right),
\end{split}
\end{equation}
with $2 \eps < \beta < 1/d$.
Consider further
the two (stationary) Markov chains $\smc_{\alpha},\smc_{\alpha,\eps}\in\Smc$
indexed, respectively, by $\dist$ and $\dist_\eps$.
Then $\abs{\pimin\left( \smc_{\alpha} \right) - \pimin\left( \smc_{\alpha,\eps} \right)} = \Omega(\eps)$, and for all $m$, we
exploit the structure of $\Smc$ to derive
a tensorization property of the KL divergence between trajectories
$\X_1^m \sim \smc_{\alpha}$ and $\Y_1^m \sim \smc_{\alpha, \eps}$:
$$\kl{\X_1^m}{\Y_1^m} \leq \alpha m \kl{\dist_\eps}{\dist}.$$
The argument is concluded with
a direct computation of $\kl{\dist_\eps}{\dist} =\bigO \left( \frac{\eps^2}{\beta} \right)$
and a KL version of Le Cam's two point method.

\subsubsection{Proof sketch of Theorem~\ref{theorem:pseudo-sg-ub}}

We solve the estimation problem with respect to the absolute error. The quantity we wish to estimate is a maximum over the integers,
while an empirical procedure can only consider a finite search space $[K]\subset\N$.
To highlight the dependence of our estimator on the choice of $K$, we write
\begin{equation}
\label{eq:sgk-def}
\begin{split}
\sg^{\dagger}_k \eqdef \sg\left( \left(\mc \rev \right)^k \mc^k\right), \quad \widehat{\sg}^{\dagger}_k \eqdef \sg\left( \left(\widehat{\Lsym}^{(k)}\right) \trn \widehat{\Lsym}^{(k)} \right), \quad \pssgK \eqdef \max_{k \in [K]} \set{ \frac{\sg^{\dagger}_k}{k}},
\end{split}
\end{equation}
where $\widehat{\Lsym}^{(k)}$ the empirical
version of the rescaled transition matrix
\eqref{eq:Ldef}
associated with the
$k$-skipped chain. We denote by $\estpssgK$ the empirical estimator, chosen in our case to be 
\begin{equation}
\begin{split}
\estpssgK \eqdef \max_{k \in [K]}
\set
{
\frac{\widehat{\sg}^{\dagger}_k}{k}
}.
\end{split}
\end{equation}
It is easily seen that
$\abs{\pssgK - \pssg} \leq \frac{1}{K}$,
and so it suffices to consider
$K = \ceil{2/\eps}$,
which  yields
\begin{equation}
\begin{split}
\PR{\abs{\estpssgK - \pssg} > \eps} \leq \sum_{k = 1}^{\ceil{\frac{2}{\eps}}} \PR{ \abs{\sg^{\dagger}_k - \widehat{\sg}^{\dagger}_k}  > \frac{k \eps}{2}}.
\end{split}
\end{equation}
Via a standard application of
Weyl's eigenvalue perturbation inequality, Perron-Frobenius theory, and properties of similar matrices,
we obtain:
\begin{equation}
\begin{split}
\abs{\widehat{\sg}^{\dagger}_k - \sg^{\dagger}_k} \leq \nrm{(\widehat{\Lsym}^{(k)}  ) \trn \widehat{\Lsym}^{(k)} - (\Lsym^k ) \trn \Lsym^k}_2 \leq 2 \nrm{\Lsym^k - \widehat{\Lsym}^{(k)}}_2.
\end{split}
\end{equation}
We continue by decomposing $\widehat{\Lsym}^{(k)} - \Lsym^k$,
\begin{equation}
\begin{split}
&\widehat{\Lsym}^{(k)} - \Lsym^k =\EE{\mc}^{(k)} + \EE{\bpi,1}^{(k)} \Lsym^k  + \Lsym^k \EE{\bpi,2}^{(k)} + \EE{\bpi,1}^{(k)} \Lsym^k \EE{\bpi,2}^{(k)} \\
\text{where } &\EE{\mc}^{(k)} = \left(\estDpi^{(k)}\right)^{1/2} \left(\estmc^{(k)} - \mc^k \right) \left(\estDpi^{(k)}\right)^{-1/2}, \\
&\EE{\bpi,1}^{(k)} = \left(\estDpi^{(k)}\right)^{1/2}\Dpi^{-1/2} - \identity, \qquad \EE{\bpi,2}^{(k)} = \Dpi^{1/2}\left(\estDpi^{(k)}\right)^{-1/2} - \identity \\
\end{split}
\end{equation}
so that
\begin{equation}
\begin{split}
\nrm{\widehat{\Lsym}^{(k)} - \Lsym^k}_2 &\leq \nrm{\EE{\mc}^{(k)}}_2 + 2\nrm{ \EE{\bpi}^{(k)}}_2 + \nrm{\EE{\bpi}^{(k)}}_2^2, \text{ where }\nrm{\EE{\bpi}^{(k)}}_2 \eqdef \max \set{\nrm{\EE{\bpi,1}^{(k)}}_2, \nrm{\EE{\bpi,2}^{(k)}}_2}.
\end{split}
\end{equation}
Now we
must upper bound
$\nrm{\EE{\mc}^{(k)}}_2$,
$\nrm{ \EE{\bpi}^{(k)}}_2$,
and
the ``bad'' event that a $k$-skipped
chain did not visit
every state
a ``reasonable'' amount of times (in which case,
the estimator might not even be properly defined).
The most challenging quantity to bound is $\nrm{\EE{\mc}^{(k)}}_2$, which we achieve via a
row-martingale process.
After carefully controlling its second-order induced row and column processes,
we invoke a matrix martingale version of Freedman's inequality \citep{tropp2011freedman},
concluding that $\nrm{\EE{\mc}^{(k)}}_2 = \bigO(k \eps)$
with high confidence for a trajectory of
length
$m = \tilde \Omega \left( \frac{\nrm{\mc ^k}_1 \nrm{\mc}_{\bpi} }{k \eps^2} \right)$.
The bound $\nrm{ \EE{\bpi}^{(k)}}_2 = \bigO \left( k \eps \right)$
follows from
\citet[Section~6.3]{hsu-mixing-ext}
and
Theorem~\ref{theorem:alternative-concentration-pssg}.

\subsubsection{Proof sketch of Theorem~\ref{theorem:pseudo-sg-lb}}
For $0 < \alpha < \frac{1}{8}$ and $d \geq 4$,
we define the following family of {symmetric} (and hence {reversible}) stochastic $d\times d$ matrices:
\begin{equation}
\begin{split}
\mc(\alpha) = \begin{pmatrix}
1 - \alpha & \frac{\alpha}{d-1} & \cdots & \cdots & \frac{\alpha}{d-1} \\
\frac{\alpha}{d-1} & 1/2 - \frac{\alpha}{d-1} & \frac{1}{2(d-2)} & \cdots & \frac{1}{2(d-2)} \\
\vdots & \frac{1}{2(d-2)} & \ddots &  & \frac{1}{2(d-2)} \\
\vdots & \vdots & & & \vdots \\
\frac{\alpha}{d-1} & \frac{1}{2(d-2)} & \frac{1}{2(d-2)} & \cdots & 1/2 - \frac{\alpha}{d-1} \\
\end{pmatrix},
\end{split}
\end{equation}
for which $\bpi = \frac{1}{d} \cdot \unit$, and $\pssg = \Theta (\alpha)$. We then
invoke
a result of \citet{kazakos1978bhattacharyya}, reproduced
in Lemma~\ref{lemma:mc-hellinger-recursive},
which provides with a
method for recursively computing the Hellinger distance between two
distributions over words of length $m$ sampled from different Markov chains in terms of the
entry-wise geometric mean of their transition matrices.
The problem
is then
reduced to one of controlling a spectral radius.
Writing for convenience 
$$ p=\frac{\sqrt{\alpha_0 \alpha_1}}{d-1}, q=\frac{1}{2(d-2)}, r=\sqrt{(1-\alpha_0)(1-\alpha_1)},  s = \sqrt{ \left( 1/2 - \frac{\alpha_0}{d-1} \right)\left(1/2 - \frac{\alpha_1}{d-1} \right)},$$
we compute the entry-wise geometric mean to be
\begin{equation}
\begin{split}
\left[ \mc(\alpha_0), \mc(\alpha_1) \right]_{\surd} = \begin{pmatrix}
r & p & \cdots & \cdots & p \\
p & s & q & \cdots & q \\
\vdots & q & \ddots &  & q \\
\vdots & \vdots & & & \vdots \\
p & q & q & \cdots & s \\
\end{pmatrix}
\end{split}
.
\end{equation}
Observing
that the rank of this matrix is less than $2$
and employing some careful analysis,
we are able to
bound
the spectral radius $\rho$ of $\left[ \mc(\alpha_0), \mc(\alpha_1) \right]_{\surd}$ from below by
$\rho \geq 1 - 6 \frac{\alpha \eps^2}{d-1}$.
A Hellinger version of Le Cam's two-point method
concludes the proof.

\subsection{Empirical confidence intervals}
\paragraph{Non-reversible setting.}
The complete algorithmic procedure is described in Algorithm~\ref{algorithm:pseudo-spectral-gap-estimator}. Formally, for a sample path $(X_1, \dots, X_m)$, a fixed $k$ and a smoothing parameter $\alpha$, we construct the estimator
\begin{equation}
\begin{split}
\widetilde{\sg}^\dagger_{k, \alpha} (X_1, \dots, X_m) \eqdef \frac{1}{k}\sum_{r = 0}^{k-1} \widehat{\sg}_{k,r, \alpha}^\dagger (X_{t}^{(k,r)}, 1 \leq t \leq \floor{(m-r)/k}),
\end{split}
\end{equation}
where $\widehat{\sg}_{k,r, \alpha}^\dagger$ is an estimator for the spectral gap of the multiplicative reversiblization of $\mc^k$, which was constructed by observing the $k$-skipped $r$-offset Markov chain $X_{t}^{(k,r)}, 1 \leq t \leq \floor{(m-r)/k}$,
and applying Laplace $\alpha$-smoothing. We
then notice that
$\widehat{\sg}_{k,r, \alpha}^\dagger = \sg \left( \left(\widehat{\Lsym}^{(k,r, \alpha)}\right) \trn \widehat{\Lsym}^{(k,r, \alpha)} \right)$, where
\begin{equation}
\begin{split}
\left(\widehat{\Lsym}^{(k,r, \alpha)}\right) \trn \widehat{\Lsym}^{(k,r, \alpha)}  &= \left( \mathbf{D}_N^{(k,r,\alpha)} \right)^{-1/2}  \left(\mathbf{N}^{(k ,r, \alpha)}\right) \trn  \left( \mathbf{D}_N^{(k,r,\alpha)} \right)^{-1} \mathbf{N}^{(k ,r, \alpha)} \left( \mathbf{D}_N^{(k,r,\alpha)} \right)^{-1/2}, \\
\mathbf{N}^{(k,r,\alpha)} &\eqdef \left[N_{i j}^{(k ,r)} + \alpha \right]_{(i,j)}, \; \mathbf{D}_N^{(k,r,\alpha)} \eqdef \diag \left(N_1^{(k,r)} + d \alpha, \dots , N_d^{(k,r)} + d \alpha \right).
\end{split}
\end{equation}
The derivation of the confidence intervals starts with an empirical version of the decomposition introduced for the point estimator. The subsequent analysis has two key components. The first is
a perturbation bound for the stationary distribution in terms of the pseudo-spectral gap
and the stability of the perturbation of matrix with respect to the $\nrm{\cdot}_\infty$ norm.
More precisely,
Lemma~\ref{lemma:non-reversible-kappa-control} guarantees that
\begin{equation}
\begin{split}
\nrm{\estbpi - \bpi}_\infty \leq \tilde{\bigO} \left(1 \right) \frac{1}{\pssg\left(\estmc\right)} \nrm{\estmc - \mc}_\infty.
\end{split}
\end{equation}
The second
component (Lemma~\ref{lemma:mc-empirical-learning-infinity-norm}) involves controlling
the
latter perturbation
in terms of empirically observable quantities.
In particular,
\begin{equation}
\begin{split}
\nrm{\estmc - \mc}_\infty \leq \tilde{\bigO} \left( 1 \right) \sqrt{\frac{d}{\Nmin}} 
\end{split}
\end{equation}
holds with high probability
---
which is an empirical version of the result of \citet[Theorem~1]{wolfer2018minimax},
achieved by constructing and analyzing appropriate row-martingales. 

\paragraph{Reversible setting.}
Our analysis also
yields
improvements
over
the state of the art estimation procedure in the reversible setting,
where
\citet{hsu2015mixing}
used
the absolute spectral gap $\asg$ of the additive reversiblization of the empirical transition matrix
$\frac{\estmc \rev + \estmc}{2}$
as the estimator for the mixing time.
Our analysis via row-martingales
sharpens
the confidence intervals
roughly by a factor of
$\bigO(\sqrt{d})$ over the previous method.
The latter
relied on
entry-wise martingales together with the metric inequality $\nrm{\mathbf{A}}_\infty \leq d \max_{(i,j) \in [d]^2} \abs{\mathbf{A}(i, j)}, \mathbf{A} \in \R^{d \times d} $.
Additionally, we show that the computation complexity of the task can be
reduced
over
non-trivial parameter regimes. We
achieve this via
iterative methods
for
computing the second largest eigenvalue,
and
by replacing an expensive pseudo-inverse computation
by
the
already-computed
estimator for
$\asg$ itself (Corollary~\ref{corollary:reversible-kappa-control}).
These computational improvements
do not degrade
the asymptotic behavior of the confidence intervals.

\section*{Acknowledgments}
We are thankful to Daniel Paulin for the insightful
conversations, and to the anonymous referees for their valuable comments.

\bibliography{bibliography}
\bibliographystyle{abbrvnat}

\appendix
\section{Summary table}

We use the notation $\tilde \bigO_{\times}, \tilde \bigO_{+}, \tilde \Omega_{\times}, \tilde \Omega_{+}$ to denote sample complexity (where logarithmic factors are suppressed) upper and lower bounds for estimation with absolute $(+)$ and relative $(\times)$ error.

\begin{center}
\begin{table}[!ht]
  \begin{tabular}{ l | c | c | c }
    \hline
    Quantity & \citet{hsu2015mixing} & \citet{levin2016estimating} & Present work \\ \hline
		$\pimin$ (rev.) & $\tilde{\bigO}_{\times} \left( \cfrac{1}{\pimin \asg \eps^2} \right)$ & - & $\tilde{\Omega}_{\times}\left( \cfrac{1}{\pimin \asg \eps^2} \right)$ Th.~\ref{theorem:pi-star-lb} \\ \hline
		$\pimin$ (non-rev.) & - & - & $\tilde{\Theta}_{\times}\left( \cfrac{1}{\pimin \pssg \eps^2} \right)$ Th.~\ref{theorem:pi-star-ub}, Th.~\ref{theorem:pi-star-lb} \\ \hline		
    $\asg$ (rev.) & \shortstack{$\tilde{\bigO}_{+} \left( \cfrac{1}{\pimin \asg \eps^2} \right)$ \\ $\tilde{\bigO}_{\times} \left( \cfrac{1}{\pimin \asg^3 \eps^2} \right)$ \\ $\tilde{\Omega}\left( \cfrac{d}{\asg} + \frac{1}{\pimin} \right)$} & $\tilde{\bigO}_{\times} \left( \cfrac{1}{\pimin \asg \eps^2} \right)$ & $ \tilde{\Omega}_{\times} \left( \cfrac{d}{\asg \eps^2} \right)$ Th.~\ref{theorem:pseudo-sg-lb} \\ \hline	
    $\pssg$ (non-rev.) & - & - & \shortstack{$\tilde{\bigO}_{+}\left( \frac{\max \set{ \pssg^{-1},  \mathcal{C}(\mc)}}{\pimin \eps^2} \right)$ Th.~\ref{theorem:pseudo-sg-ub} \\  $ \tilde{\Omega}_{\times}\left( \cfrac{d}{\pssg \eps^2} \right)$  Th.~\ref{theorem:pseudo-sg-lb}}  \\ \hline			
  \end{tabular}
	\caption{Comparison with existing results in the literature.}
	\label{table:comparison}
	\end{table}
\end{center}
\section{Confidence intervals}

The main algorithmic procedure whose description is given at Algorithm~\ref{algorithm:pseudo-spectral-gap-estimator} yields the following fully empirical confidence intervals.

\begin{theorem}
  \label{theorem:pssg-confidence-intervals-full}
  Let $C_\mathcal{K} \le 192$ be a universal constant and define
  $$\tau_{\delta, m} = \inf\set{t>0:\left(1 + \ceil{\ln(2m/t)}_+\right)  (d + 1)  e^{-t} \leq \delta}
  .$$
  Then $\tau_{\delta,m}  = \bigO \left( \ln \left( \frac{d \ln{m}}{\delta} \right) \right)$
  and
  with probability at least
  $1 - \delta$,

\begin{equation}
\begin{split}
& \abs{\estpssgK^{(\alpha)} - \pssg} \leq \frac{1}{K} + 2 \max_{k \in [K]} \set{\frac{1}{k^2} \sum_{r=0}^{k-1} \left( \hat{a}^{(k, r, \alpha)}  + 2 \hat{c}^{(k, r, \alpha)} + \left(\hat{c} ^{(k, r, \alpha)} \right)^2 \right)},  \\
& \text{where } \begin{cases}
    \hat{a}^{(k,r,\alpha)} &= \sqrt{d}  \frac{\Nmax^{(k,r)} + d \alpha}{\Nmin^{(k,r)} + d \alpha} \hat{d}^{(k,r, \alpha)}  \\
    \hat{b}^{(k,r, \alpha)} &= \frac{C_\mathcal{K}}{\pssg\left(\estmc^{(k,r, \alpha)}\right)} \ln \left( 2 \sqrt{\frac{2 (\floor{(m -r) / k} + d^2 \alpha)}{\Nmin^{(k,r)} + d \alpha }} \right) \hat{d}^{(k,r, \alpha)} \\
    \hat{c}^{(k, r, \alpha)} &= \frac{1}{2} \max \bigcup_{i \in [d]} \set{\cfrac{\hat{b}^{(k,r,\alpha)}}{\cfrac{N_i^{(k,r)} + d \alpha}{\floor{(m-r)/k} + d^2 \alpha}   }, \cfrac{\hat{b}^{(k,r,\alpha)}}{\left[\cfrac{N_i^{(k,r)} + d \alpha}{\floor{(m-r)/k} + d^2 \alpha} - \hat{b}^{(k,r,\alpha)}\right]_+}} \\
		\hat{d}^{(k,r, \alpha)} &= 4 \tau_{\delta/d, \floor{(m - r)/ k }} \sqrt{\frac{d }{\Nmin^{(k,r)} + d \alpha}} + \frac{ 2 \alpha d}{\Nmin^{(k,r)} + d \alpha}.
  \end{cases}
\end{split}
\end{equation}
The interval widths asymptotically behave as
\begin{equation}
\begin{split}
\abs{\estpimin^{(\alpha)} - \pimin} &= \tilde \bigO \left(  \frac{\sqrt{d}}{\pssg \sqrt{\pimin m} } \right) \\   \abs{\estpssgK^{(\alpha)} - \pssg} &= \tilde{\bigO} \left( \frac{1}{K} + \sqrt{\frac{d}{\pimin m}}  \left( \sqrt{d} \nrm{\mc}_{\bpi}  + \frac{1}{\pssg \pimin} \right) \right).
\end{split}
\end{equation}
In the reversible case,
\begin{equation}
\begin{split}
\abs{\estasg^{(\alpha)} - \asg} &\leq \hat{a}^{(\alpha)}  + 2 \hat{c}^{(\alpha)} + \left(\hat{c} ^{(\alpha)} \right)^2\qquad  \abs{\estpimin^{(\alpha)} - \estpimin} \leq \hat{b}^{(\alpha)}, \\
\text{where } & \begin{cases}
    \hat{a}^{(\alpha)} &= \sqrt{d} \frac{\Nmax + d \alpha}{\Nmin + d \alpha} \hat{d}^{(\alpha)}  \\
    \hat{b}^{(\alpha)} &= \frac{C_\mathcal{K}}{\estasg^{(\alpha)}} \ln \left( 2 \sqrt{\frac{2 (m + d^2 \alpha)}{\Nmin + d \alpha }} \right) \hat{d}^{(\alpha)} \\
    \hat{c}^{(\alpha)} &= \frac{1}{2} \max \bigcup_{i \in [d]} \set{\cfrac{\hat{b}^{(\alpha)}}{\cfrac{N_i + d \alpha}{m + d^2 \alpha}   }, \cfrac{\hat{b}^{(\alpha)}}{\left[\cfrac{N_i + d \alpha}{m + d^2 \alpha} - \hat{b}^{(\alpha)}\right]_+}} \\
		\hat{d}^{(\alpha)} &= 4 \tau_{\delta/d, \floor{m }} \sqrt{\frac{d }{\Nmin + d \alpha}} + \frac{ 2 \alpha d}{\Nmin + d \alpha}
  \end{cases}.
\end{split}
\end{equation}
The interval widths asymptotically behave as
\begin{equation}
\begin{split}
 \abs{\estpimin^{(\alpha)} - \pimin} = \tilde{\bigO} \left( \frac{\sqrt{d}}{\asg \sqrt{\pimin m } } \right), \qquad  \abs{\estasg^{(\alpha)} - \asg} = \tilde{\bigO} \left( \sqrt{\frac{d}{\pimin m}} \left( \sqrt{d} \nrm{\mc}_{\bpi} + \frac{1}{\asg \pimin} \right) \right).
\end{split}
\end{equation}

\end{theorem}
\section{Proofs}
\label{section:full-proofs}

\subsection{Minimum stationary probability}
\subsubsection{Proof of Theorem~\ref{theorem:pi-star-ub}}

Let $(X_1, \dots, X_m) \sim (\mc, \bmu)$ be a $d$-state Markov chain,
with pseudo-spectral gap $\pssg$ and stationary distribution $\bpi$ minorized by $\pimin$.
Our estimator for $\pimin$ is
defined
as the minimum of the empirical stationary distribution, or more formally,
\begin{equation}
\label{definition:estimator-pi-star-ub}
\estpimin \eqdef \min_{i \in [d]} \estbpi(i)
=
\min_{i\in[d]}
\estbpi(i) \frac{1}{m} \abs{t \in [m]: X_t = i}.
\end{equation}
Without loss of generality, suppose that $\pimin = \pi_1 \leq \pi_2 \leq \dots \leq \pi_d$
(renumber states if needed).
A Bernstein-type inequality
\citep[Theorem~3.4]{paulin2015concentration}, combined with
\citet[Theorem~3.10]{paulin2015concentration},
yields that for all $i \in [d]$ and $t > 0$,

\begin{equation}
\begin{split}
\PR[\mc, \bmu]{|\estpi_i - \pi_i| \geq t} &\leq \sqrt{2 \nrm{\bmu/\bpi}_{2, \bpi}} \expo{- \frac{t^2 \pssg m }{16(1 + 1/(m \pssg))\pi_i(1-\pi_i) + 40t}}.
\end{split}
\end{equation}
Taking
$m > \frac{1}{\pssg}$ and
putting
$$t_m = \cfrac{\log{\left( \frac{d}{\delta} \sqrt{2 \nrm{\bmu/\bpi}_{2, \bpi}} \right)}}{\pssg m},
\qquad
t = \sqrt{32\pi_i t_m} + 40 t_m,$$
yields (via a union bound)
$\PR[\mc, \bmu]{ \nrm{\estbpi - \bpi}_\infty \geq t } \leq \delta$.
We
claim that
\beqn
\label{eq:piincl}
\forall i \in [d], |\estpi_i - \pi_i| < \sqrt{32\pi_i t_m} + 40 t_m
\implies
|\estpi_\star - \pimin| < 8\sqrt{t_{m} \pimin} + 136 t_{m}
.
\eeqn
Indeed,
Let $i_\star$ be such that $\estpi_\star = \estpi_{i_\star}$, and suppose that $\forall i \in [d]: |\estpi_i - \pi_i| < \sqrt{32\pi_i t_m} + 40 t_m$. Since  $\estpi_\star \leq \estpi_1$,
we have
\begin{equation}
\estpi_\star - \pimin \leq \estpi_1 - \pi_1 \leq \sqrt{32\pimin t_m} + 40 t_m \leq \pimin + 48 t_m,
\end{equation}
where the last inequality follows from the AM-GM inequality. Furthermore, 
$a \leq b \sqrt{a} + c \implies a \leq b^2 + b \sqrt{c} + c$
\citep{bousquet2004introduction}
and
\begin{equation}
\pi_{i_\star} \leq \sqrt{32 t_m }\sqrt{\pi_{i_\star}} + (\estpi_{i_\star} + 40 t_{m}).
\end{equation}
Thus,
\begin{equation}
\begin{split}
\pi_{i_\star} &\leq 32 t_{m} + (\estpi_{\star} + 40 t_{m})  + \sqrt{32 t_m }\sqrt{\estpi_{\star} + 40 t_{m}} = \estpi_{\star} + 72 t_{m}   + \sqrt{32 t_{m}  (\estpi_{\star} + 40 t_{m})} \\
&\leq \estpi_{\star} + \sqrt{32 t_{m}  (2\pimin + 88 t_{m})} + 72 t_{m} \leq \estpi_{\star} + \sqrt{32 \cdot 88 t_{m}^2} +\sqrt{64 t_{m} \pimin} + 72 t_{m} \\
&= \estpi_{\star} + 16 \sqrt{11} t_{m} + 8\sqrt{t_{m} \pimin} + 72 t_{m} \leq \estpi_{\star} + 8\sqrt{t_{m} \pimin} + 136 t_{m}m
\end{split}
\end{equation}
and therefore,
\begin{equation}
\pimin - \estpi_\star \leq \pi_{i_\star} - \estpi_{i_\star} \leq 8\sqrt{t_{m} \pimin} + 136 t_{m},
\end{equation}
whence
\begin{equation}
\abs{\estpi_\star - \pimin} \leq 8\sqrt{t_{m} \pimin} + 136 t_{m}.
\end{equation}
A direct computation shows that 
\begin{equation}
m \geq \cfrac{16^2 \pimin \log{\left( \frac{d}{\delta} \sqrt{2 \nrm{\bmu/\bpi}_{2, \bpi}} \right)}}{\pssg \eps^2 } \implies 8 \sqrt{t_{m} \pimin} \leq \frac{\eps}{2} 
\end{equation}
and 
\begin{equation}
m \geq \cfrac{2 \cdot 136  \log{\left( \frac{d}{\delta} \sqrt{2 \nrm{\bmu/\bpi}_{2, \bpi}} \right)}}{\pssg \eps } \implies 136 t_{m} \leq \frac{\eps}{2},
\end{equation}
so that for $m \geq \cfrac{\log{\left( \frac{d}{\delta} \sqrt{2 \nrm{\bmu/\bpi}_{2, \bpi}} \right)}}{\pssg \eps } \max \set{272, \frac{256 \pimin}{\eps}}$,
we have
$\abs{\estpimin - \pimin} < \eps$ with probability at least $1 - \delta$.
The theorem then follows by choosing the precision to be $\eps \pimin$.

\subsubsection{Proof of Theorem~\ref{theorem:pi-star-lb}}

We first prove the claim for
absolute error
in
the regime $2 \eps < \pimin < \pssg$,
and at the end obtain the claimed result via an absolute-to-relative conversion. Consider a $(d+1)$-state star-shaped class of Markov chains with a single ``hub''
connected to
``spoke'' states,
each of which can only
transition to itself and to the hub.  Namely, we construct the family of $(d+1)$-state Markov chains for $d \in \mathbb{N}, d \geq 4$,
\beq
\Smc = \set{ \smc_\alpha(\dist): 0 < \alpha < 1, \dist  = (p_1, \dots, p_d) \in \Delta_d }, \text{ where } \smc_\alpha(\dist) = \left( \begin{smallmatrix} \alpha & (1 - \alpha) p_1 & \cdots & (1 - \alpha) p_d \\ \alpha & 1 - \alpha & \hdots & 0 \\ \vdots & \vdots & \ddots & \vdots \\ \alpha & 0 & \hdots & 1 - \alpha \end{smallmatrix} \right).
\eeq
Notice that $\bpi(\dist) = \left(\alpha, (1 - \alpha)p_1, \dots, (1 - \alpha)p_d \right)$ is a stationary distribution for $\smc_\alpha(\dist)$, that $\smc_\alpha(\dist)$ is reversible and that the spectrum of $\smc_\alpha(\dist)$
consists of
$\lambda_0 = 1$,
$\lambda_\alpha = 1 - \alpha$ (of multiplicity $d-1$),
and $\lambda_{d} = 0$.
Thus,
the absolute spectral gap is $\asg(\smc_\alpha(\dist)) = \alpha$, and the
pseudo-spectral gap $\pssg=O(\alpha)$,
by
Lemma~\ref{lemma:pssg-reversible}.
We apply
Le Cam's two-point method as follows. Take $\dist$ and $\dist_\eps$ in $\Delta_d$ defined as follows,
\begin{equation}
\begin{split}
\dist \eqset \left(\beta , \beta, \frac{1 - 2 \beta}{d-2} , \dots, \frac{1 - 2 \beta}{d-2} \right), \dist_\eps \eqset \left(\beta + 2 \eps , \beta - 2 \eps, \frac{1 - 2 \beta}{d-2} , \dots, \frac{1 - 2 \beta}{d-2} \right),
\end{split}
\end{equation}
with $2 \eps < \beta < 1/d$.
For readability, we will abbreviate
in this section
$$\smc_{\alpha} \eqset \smc_{\alpha}(\dist),  \bpi \eqset \bpi(\dist),  \smc_{\alpha, \eps} \eqset \smc_{\alpha}(\dist_\eps),  \bpi_\eps \eqset \bpi(\dist_\eps),  \PR{\cdot} \eqset \PR[\smc_{\alpha}, \pi]{\cdot} ,  \PR[\eps]{\cdot} \eqset \PR[\smc_{\alpha, \eps}, \pi_\eps]{\cdot}
.$$
Consider the two stationary Markov chains $(\smc_{\alpha}, \pi)$ and $(\smc_{\alpha, \eps}, \pi_\eps)$ for $\alpha > \beta$. First notice that 
$$\abs{\min_{i \in [d+1]} \pi_\eps(i) - \min_{i \in [d+1]} \pi(i)} = 2 \eps.$$ 
We now exhibit a tensorization property of the KL divergence
between trajectories of length $m$ sampled from
the two
chains. Let $\X_1^m \sim (\smc_{\alpha}, \pi)$ and $\Y_1^m \sim (\smc_{\alpha, \eps}, \pi_\eps)$, from the definition of the KL divergence and the Markov property,
\begin{equation}
\begin{split}
&\kl{\Y_1^m}{\X_1^m} = \sum_{(z_1, \dots, z_m) \in [d]^m} \PR[\eps]{\Y_1^m = z_1^m} \ln \left( \frac{\PR[\eps]{\Y_1^m = z_1^m}}{\PR{\X_1^m = z_1^m}} \right) \\
&= \underbrace{\sum_{(z_1, \dots, z_{m-1}) \in [d]^{m-1}} \PR[\eps]{\Y_1^{m-1} = z_1^{m-1}}  \ln \left( \frac{\PR[\eps]{\Y_1^{m-1} = z_1^{m-1}} }{\PR{\X_1^{m-1} = z_1^{m-1}}} \right)}_{\kl{\Y_1^{m-1}}{\X_1^{m-1}}} \underbrace{\sum_{z_m \in [d]} \smc_{\alpha, \eps} (z_{m-1}, z_m)}_{=1} \\
&+ \sum_{z_1^{m-2}  \in [d]^{m-2}} \PR[\eps]{\Y_1^{m-2} = z_1^{m-2}} \sum_{z_{m-1} \in [d]} \smc_{\alpha, \eps} (z_{m-2}, z_{m-1}) \Bigg\{ \\
& \underbrace {\sum_{z_m \in [d]} \smc_{\alpha, \eps} (z_{m-1}, z_m) \ln \left( \frac{\smc_{\alpha, \eps} (z_{m-1}, z_m) }{\smc_{\alpha} (z_{m-1}, z_m)} \right)}_{= \pred{z_{m-1} = 1} \kl{\dist_\eps}{\dist} } \Bigg\}, \\
\end{split}
\end{equation}
and as by structural property of the chains of the class $\forall z_{m-2} \in [d], \smc_{\alpha, \eps} (z_{m-2}, 1) = \alpha$,
\begin{equation}
\begin{split}
\kl{\Y_1^m}{\X_1^m} &= \kl{\Y_1^{m-1}}{\X_1^{m-1}} + \alpha \kl{\dist_\eps}{\dist} \underbrace{ \sum_{z_1^{m-2} \in [d]^{m-2}} \PR[\eps]{\Y_1^{m-2} = z_1^{m-2}}}_{=1}, \\
\end{split}
\end{equation}
so by induction and stationarity,
$\kl{\Y_1^m}{\X_1^m} = m \alpha \kl{\dist_\eps}{\dist}$.
It remains
to compute the KL divergence between the two distributions,
\begin{equation}
\begin{split}
\kl{\dist_\eps}{\dist} &= (\beta + 2\eps) \ln \left( 1  + \frac{2\eps}{\beta} \right) + (\beta - 2\eps) \ln \left( 1 + \frac{- 2\eps}{\beta} \right) \leq \frac{8 \eps^2}{\beta}.
\end{split}
\end{equation}
Denote by $\M_{d, \pssg, \pimin}$ the collection of all $d$-state Markov chains whose stationary distribution is minorized by $\pimin$ and whose pseudo-spectral gap is at least $\pssg$, and define the minimax risk as
\begin{equation}
\begin{split}
\mathcal{R}_m = \inf_{\estpimin} \sup_{\mc \in \M_{d, \pssg, \pimin}} \PR[\mc]{\abs{\estpimin - \pimin} > \eps},
\end{split}
\end{equation}
then from the KL divergence version of Le Cam's theorem \citep[Chapter~2]{tsybakov2009introduction},
\begin{equation}
\begin{split}
\mathcal{R}_m &\geq  \frac{1}{4} \expo{ - \kl{\Y_1^m}{\X_1^m}} \geq  \frac{1}{4} \expo{ - \frac{8 \alpha \eps^2 m}{\beta} } \geq  \frac{1}{4} \expo{ - \frac{8 \pssg \eps^2 m}{\pimin} }.
\end{split}
\end{equation}
Hence,
for $m \leq \cfrac{\pimin \ln{\left(\frac{1}{4 \delta}\right)}}{8 \pssg \eps^2}$,
we have
$\mathcal{R}_m \geq \delta$,
and
so
$m = \Omega\left( \cfrac{\pimin \ln{\left(\frac{1}{\delta}\right)}}{\pssg \eps^2}\right) = \tilde{\Omega}\left( \cfrac{\pimin}{\pssg \eps^2}\right)$ is a lower bound for the problem, in the $\pssg > \pimin$ regime, up to absolute error. When taking the accuracy to be $\eps \pimin$ instead, the previous bound becomes $\tilde{\Omega}\left( \frac{1}{\pssg \pimin \eps^2}\right)$, and the fact that the proof exclusively makes use of a reversible family confirms that the upper bound derived in \citet{hsu-mixing-ext} is minimax optimal up to a logarithmic a factor,
in the parameters $\pimin, \pssg$ and
$\eps$.

\subsection{Pseudo-spectral gap}
\subsubsection{Proof of Theorem~\ref{theorem:pseudo-sg-ub}}
\label{section:pseudo-sg-ub}
In this section, we analyze
our
point estimator for the pseudo-spectral gap.

\paragraph{Reduction to a maximum over a finite number of estimators.} \label{paragraph:reduction-finite-K}
Recall the definitions from \eqref{eq:sgk-def}:
\beq
\sg^{\dagger}_k \eqdef \sg\left( \left(\mc \rev \right)^k \mc^k\right), \qquad \pssgK \eqdef \max_{k \in [K]} \set{ \frac{\sg^{\dagger}_k}{k} }.
\eeq
It follows from the definition of $\mc \rev$ that
$\bpi$
is
the stationary distribution of $\left(\mc \rev\right)^k \mc^k$
for all
$k \in \N$.
We denote by $\estpssgK$ the empirical estimator for $\pssgK$:
\begin{equation}
\begin{split}
\estpssgK \eqdef \max_{k \in [K]} \set{ \frac{\widehat{\sg}^{\dagger}_k (X_1, \dots, X_m)}{k} },
\end{split}
\end{equation}
where $\widehat{\sg}^{\dagger}_k (X_1, \dots, X_m)$ is an estimator for $\sg^{\dagger}_k$
to be defined below. From the triangle inequality,
\begin{equation}
\begin{split}
\PR{\abs{\estpssgK - \pssg} > \eps} \leq \PR{\abs{\estpssgK - \pssgK} + \abs{\pssgK - \pssg} > \eps}. \\
\end{split}
\end{equation}
By taking a maximum over a larger set, $\abs{\pssgK - \pssg} = \pssg - \pssgK \leq \max_{k \in \N \setminus [K]} \set{ \frac{\sg^{\dagger}_k}{k} }$,
and since $\sg^{\dagger}_k \leq 1$
for all $k \in \N$,
we have
$\abs{\pssgK - \pssg} \leq \frac{1}{K}$.
Thus, for $K \geq \frac{2}{\eps}$,
\begin{equation}
\begin{split}
\PR{\abs{\estpssgK - \pssg} > \eps} \leq \PR{\abs{\estpssgK - \pssgK} + \frac{1}{K} > \eps} \leq  \PR{\abs{\estpssgK - \pssgK}  > \frac{\eps}{2}}, \\
\end{split}
\end{equation}
and from another application of the triangle inequality, for $K \geq \frac{2}{\eps}$,
\begin{equation}
\begin{split}
\PR{\abs{\estpssgK - \pssg} > \eps} \leq \PR{ \max_{k \in [K]} \set{\frac{\abs{\sg^{\dagger}_k - \widehat{\sg}^{\dagger}_k}}{k} } > \frac{\eps}{2}} \leq \sum_{k = 1}^{\ceil{\frac{2}{\eps}}} \PR{ \abs{\sg^{\dagger}_k - \widehat{\sg}^{\dagger}_k}  > \frac{k \eps}{2}}.
\end{split}
\end{equation}

\paragraph{Reduction to controlling spectral norms.}
\label{paragraph:general-k-spectral-reduction}

Recall that $\sg^{\dagger}_k = \sg ((\mc^k) \rev \mc^k )$ is the spectral gap of the multiplicative reversiblization of $\mc^k$, the $k$-skipped Markov chain associated with $\mc$. We now introduce natural estimators for $\mc^k$ and $\bpi$,
\begin{equation}
\begin{split}
\estmc^{(k)}(i,j) \eqdef \frac{N_{i j}^{(k)}}{N_i^{(k)}} &\text{ and }\estbpi^{(k)}(i) \eqdef \frac{N_i^{(k)}}{m}, \\
\end{split}
\end{equation}
where $N_{i}^{(k)}$ and $N_{i j}^{(k)}$ are defined in
(\ref{equation:def-number-of-visits}, \ref{equation:def-number-of-transitions}).
It is readily verified
that $\estbpi^{(k)}$ is
the stationary distribution of $\estmc^{(k)}$,
and so
$(\estmc^{(k)}) \rev \estmc^{(k)}$
is a natural estimator for $(\mc^k) \rev \mc^k$.
We also introduce $\widehat{\Lsym}^{(k)} = (\estDpi^{(k)})^{1/2} \estmc^{(k)} (\estDpi^{(k)})^{-1/2}$, and will later show that the event
$\min_{(k,i) \in [K] \times [d]}N_i^{(k)} > 0$
occurs with high probability for our sampling regime,
in which case
the above quantities
will be well-defined, making smoothing unnecessary.
Notice $(\mc^k) \rev \mc^k = \Dpi^{-1/2} (\Lsym ^k ) \trn \Lsym ^k \Dpi^{1/2}$,
and
$(\Lsym ^k ) \trn \Lsym^k$
is symmetric,
which makes
Weyls' inequality
applicable:
$$\abs{\widehat{\sg}^{\dagger}_k - \sg^{\dagger}_k} \leq \nrm{(\widehat{\Lsym}^{(k)}  ) \trn \widehat{\Lsym}^{(k)} - (\Lsym^k ) \trn \Lsym^k}_2.$$
By the triangle inequality and sub-multiplicativity of the spectral norm, 
\begin{equation}
\begin{split}
\abs{\widehat{\sg}^{\dagger}_k - \sg^{\dagger}_k} \leq \nrm{\left(\Lsym^k \right) \trn}_2  \nrm{\widehat{\Lsym}^{(k)} - \Lsym^k}_2 + \nrm{ \left(\widehat{\Lsym}^{(k)}\right) \trn - \left(\Lsym^k \right) \trn}_2 \nrm{\widehat{\Lsym}^{(k)}}_2 \leq 2 \nrm{\Lsym^k - \widehat{\Lsym}^{(k)}}_2,
\end{split}
\end{equation}
where for the second inequality we
invoked
the Perron-Frobenius theorem ($\sqrt{\bpi}$ is an eigenvector associated to
eigenvalue $1$ for $(\Lsym ^k ) \trn \Lsym^k$,
$k \in [K]$ and $\nrm{\Lsym^k}_2 = 1$;
the same holds
for its empirical version).
We continue by decomposing $\widehat{\Lsym}^{(k)} - \Lsym^k$ into more manageable quantities,
\begin{equation}
\begin{split}
&\widehat{\Lsym}^{(k)} - \Lsym^k =\EE{\mc}^{(k)} + \EE{\bpi,1}^{(k)} \Lsym^k  + \Lsym^k \EE{\bpi,2}^{(k)} + \EE{\bpi,1}^{(k)} \Lsym^k \EE{\bpi,2}^{(k)} \\
\text{where } &\EE{\mc}^{(k)} = \left(\estDpi^{(k)}\right)^{1/2} \left(\estmc^{(k)} - \mc^k \right) \left(\estDpi^{(k)}\right)^{-1/2}, \\
&\EE{\bpi,1}^{(k)} = \left(\estDpi^{(k)}\right)^{1/2}\Dpi^{-1/2} - \identity, \qquad \EE{\bpi,2}^{(k)} = \Dpi^{1/2}\left(\estDpi^{(k)}\right)^{-1/2} - \identity.
\end{split}
\end{equation}
Writing $\nrm{\EE{\bpi}^{(k)}}_2 = \max \set{\nrm{\EE{\bpi,1}^{(k)}}_2, \nrm{\EE{\bpi,2}^{(k)}}_2}$, from the sub-multiplicativity of the spectral norm and another application of the Perron-Frobenius theorem,
\begin{equation}
\begin{split}
\nrm{\widehat{\Lsym}^{(k)} - \Lsym^k}_2 &\leq \nrm{\EE{\mc}^{(k)}}_2 + 2\nrm{ \EE{\bpi}^{(k)}}_2 + \nrm{\EE{\bpi}^{(k)}}_2^2.
\end{split}
\end{equation}
Consider the event
$$\mathcal{K}_{1/2}^K = \set{ \max_{(k,i)\in[K]\times[d]}
\abs{N_i^{(k)} - \E[\mc, \bpi]{N_i^{(k)}}} \leq \frac{1}{2} \E[\mc, \bpi]{N_i^{(k)}}}.$$
Applying the law of total probability,
\begin{equation}
\begin{split}
\PR[\mc, \bmu]{\abs{\estpssgK - \pssg} > \eps} & \leq \PR[\mc, \bmu]{\compt{\mathcal{K}_{1/2}^K}} + \sum_{k = 1}^{\ceil{\frac{2}{\eps}}} \Bigg( \PR[\mc, \bmu]{ \nrm{\EE{\mc}^{(k)}}_2 > \frac{\eps k}{8} \text{ and } \mathcal{K}_{1/2}^K } \\
& + \PR[\mc, \bmu]{ 2\nrm{\EE{\bpi}^{(k)}}_2 + \nrm{\EE{\bpi}^{(k)}}_2^2 > \frac{\eps k }{8} } \Bigg).
\end{split}
\end{equation}

\paragraph{A matrix martingale approach.}

Consider for $k \in [K]$, the following sequence of random matrices $\Y_1^{(k)} = \zero, \Y_t^{(k)} = \left[ \pred{X_{t-1}^{(k)} = i} \left(\pred{X_{t}^{(k)} = j} - \mc^k(i,j)\right) \right]_{(i,j) \in [d]^2}$. Notice that $\sum_{t = 1}^{\floor{m/k}} \Y_t^{(k)} = \diag \left(N_1^{(k)}, \dots, N_d^{(k)}\right) \left(\estmc^{(k)} - \mc^k\right)$. From the Markov property, for any $t \geq 1$, $\E[t-1]{\Y_t^{(k)}} = \zero$, proving that $\Y_t^{(k)}$ is a matrix martingale difference sequence, and we can write 
\begin{equation}
\begin{split}
\EE{\mc}^{(k)} = \diag\left(\sqrt{N_1^{(k)}}, \dots, \sqrt{N_d^{(k)}}\right)^{-1} \left( \sum_{t=1}^{\floor{m/k}} \Y_t^{(k)} \right) \diag\left(\sqrt{N_1^{(k)}}, \dots, \sqrt{N_d^{(k)}}\right)^{-1}.
\end{split}
\end{equation}
At this point, we will
invoke Freedman's inequality for matrices \citep[Corollary~1.3]{tropp2011freedman},
stated here as Theorem~\ref{theorem:matrix-freedman}.
A direct computation shows that for $t \geq 2$,
\begin{equation}
\begin{split}
\nrm{\Y_t^{(k)}}_\infty = 2\left(1 - \mc^k\left(X_{t-1}^{(k)},X_{t}^{(k)}\right)\right) \text{ and } \nrm{\Y_t^{(k)}}_1 = \max_{j \in [d]} \abs{ \pred{X_{t}^{(k)} = j} - \mc^k\left(X_{t-1}^{(k)},j\right) }\\
\end{split}
\end{equation}
so that from H\"older's inequality for matrix norms, $\nrm{\Y_t^{(k)}}_2 \leq \sqrt{\nrm{\Y_t^{(k)}}_1 \nrm{\Y_t^{(k)}}_\infty} \leq \sqrt{2}$. The same trivially holds for $t = 1$.
Similarly, using the Kronecker symbol $\delta_{i j}$, we compute
\begin{equation}
\label{eq:YY}
\begin{split}
\left[\Y_t^{(k)} \left(\Y_t^{(k)}\right) \trn \right]_{(i,j)} &= \delta_{i j} \pred{X_{t-1}^{(k)} = i} \left( 1 - 2 \sum_{\ell = 1}^{d} \mc^k(i,\ell) \pred{X_{t}^{(k)} = \ell}  + \nrm{\mc^k(i,\cdot)}_2^2 \right) \\
\left[ \left(\Y_t^{(k)}\right) \trn \Y_t^{(k)} \right]_{(i,j)} &= \sum_{\ell = 1}^{d} \pred{X_{t-1}^{(k)} = \ell} \bigg(\pred{X_{t}^{(k)} = i} \pred{X_{t}^{(k)} = j} \\
& - \mc^k(\ell,i) \pred{X_{t}^{(k)} = j} - \mc^k(\ell,j) \pred{X_{t}^{(k)} = i} + \mc^k(\ell,i) \mc^k(\ell,j)\bigg).
\end{split}
\end{equation}
Recall the random variables
$\Nmin^{(k)}$ and $\Nmax^{(k)}$ defined in \eqref{equation:def-random-variable-convenient-notation}.
As a consequence of \eqref{eq:YY},
the two predictable quadratic variation processes
$\W_{\col, \floor{m/k}}^{(k)}$ and $\W_{\row, \floor{m/k}}^{(k)}$
are well-defined:
\begin{equation}
\begin{split}
\left[\W_{\col, \floor{m/k}}^{(k)}\right]_{(i,j)} &\eqdef \sum_{t=1}^{\floor{m/k}} \E[t-1]{\left[\Y_t^{(k)} \left(\Y_t^{(k)}\right) \trn \right]_{(i,j)}} = \delta_{i j} N_i^{(k)} \left( 1 - \nrm{\mc^k(i,\cdot)}_2^2 \right) \\
\left[\W_{\row, \floor{m/k}}^{(k)}(i,j)\right]_{(i,j)} &\eqdef \sum_{t=1}^{\floor{m/k}} \E[t-1]{ \left[ \left(\Y_t^{(k)}\right) \trn \Y_t^{(k)} \right]_{(i,j)}} = \sum_{\ell = 1}^{d} N_\ell^{(k)} \mc^k(\ell,i) (\delta_{i j}  - \mc^k(\ell,j))
\end{split}
\end{equation}
whence
\begin{equation}
\begin{split}
\nrm{\W_{\col, \floor{m/k}}^{(k)}}_2 \leq \Nmax^{(k)}, \qquad \nrm{\W_{\row, \floor{m/k}}^{(k)}}_2 \leq \frac{1}{4} \Nmax^{(k)} \nrm{\mc^k}_{1},
\end{split}
\end{equation}
where we used the fact that
$\mathbf{A} \geq \mathbf{B} \geq \zero \text{ (entry-wise) }
\implies \nrm{\mathbf{A}}_2 \geq \nrm{\mathbf{B}}_2$
holds for all real valued matrices.

Finally, as in the event where $\forall k \in [K], \Nmin^{(k)} > 0$, $\EE{\mc}^{(k)}$ is defined, and by sub-multiplicativity of the spectral norm and the fact that for $\mathbf{D} = \diag\left(v_1, \dots, v_d\right)$ a diagonal matrix, $\nrm{\mathbf{D}}_2 = \max_{i \in [d]} v_i$,
\begin{equation}
\begin{split}
\nrm{\EE{\mc}^{(k)}}_2 \leq \frac{1}{\Nmin^{(k)}}\nrm{\sum_{t=1}^{\floor{m/k}} \Y_t^{(k)} }_2. \\
\end{split}
\end{equation}
Since $\mathcal{K}_{1/2}^K \implies
(\Nmin^{(k)} \geq \frac{1}{2} \floor{m/k} \pimin)
\wedge
(\Nmax^{(k)} \leq \frac{3}{2} \floor{m/k} \max_{i \in [d]} \set{ \pi_i })$, writing
\begin{equation}
\nrm{\Sigma_{\floor{m/k}}}_2 \eqdef \max \set{ \nrm{\W_{\col, \floor{m/k}}^{(k)}}_2 , \nrm{\W_{\row, \floor{m/k}}^{(k)}}_2 },
\end{equation}
we have that
\begin{equation}
\begin{split}
& \PR[\mc, \bmu]{ \nrm{\EE{\mc}^{(k)}}_2 > \frac{k \eps}{8}  \text{ and } \mathcal{K}_{1/2}^K } \\
& \leq \PR[\mc, \bmu]{ \nrm{\sum_{t=1}^{\floor{m/k}} \Y_t^{(k)} }_2 > \frac{k \eps}{8} \frac{\floor{m/k} \pimin}{2} \text{ and } \Nmax^{(k)} \leq \frac{3}{2} \floor{m/k} \max_{i \in [d]} \pi_i } \\
& \leq \PR[\mc,\bmu]{\nrm{\sum_{t=1}^{\floor{m/k}} \Y_t }_2 > \frac{(m-k) \eps \pimin}{16}  \text{ and } \nrm{\Sigma_{\floor{m/k}}}_2 \leq \frac{3m}{2k} \nrm{\mc^k}_{1} \max_{i \in [d]} \pi_i } \\
& \leq 2d \expo{- C \cfrac{m \eps^2 \pimin k}{\nrm{\mc}_{\bpi}\nrm{\mc^k}_1 }}
\text{ (Theorem~\ref{theorem:matrix-freedman})},
\end{split}
\end{equation}
and for $m \geq C \frac{ \nrm{\mc^k}_1 \nrm{\mc}_{\bpi}}{ k \eps^2 \pimin} \ln \left( \frac{2 d}{\delta_{\mc}^{(k)}} \right)$, the error probability is controlled by $\delta_{\mc}^{(k)}$. \\\\

\paragraph{Finishing up.}
Since we have
\begin{equation}
\begin{split}
\PR[\mc, \bmu]{ 2\nrm{\EE{\bpi}^{(k)}}_2 + \nrm{\EE{\bpi}^{(k)}}_2^2 > \frac{k \eps }{8}} \leq \PR[\mc, \bmu]{ \nrm{\EE{\bpi}^{(k)}}_2 > \frac{ k\eps }{32} } + \PR[\mc, \bmu]{ \nrm{\EE{\bpi}^{(k)}}_2 > \frac{\sqrt{ k \eps} }{4}},
\end{split}
\end{equation}
Theorem~\ref{theorem:pi-star-ub} together with \citet[Section~6.3]{hsu-mixing-ext}
imply that
for
$m \geq \frac{C}{\pssg \pimin \eps^2} \ln{\left( \frac{d}{\delta_{\bpi}^{(k)}} \sqrt{ \pimin^{-1}} \right)}$,
this quantity is upper bounded by $\delta_{\bpi}^{(k)}$.
Similarly, for $K \geq \frac{2}{\eps}$, taking $m \geq \frac{C_\mathcal{K}}{\pimin \pssg \eps^2} \ln\left( \frac{2\sqrt{2 \pimin^{-1}} d }{ \eps \delta_{\mathcal{K}}} \right)$ and invoking Lemma~\ref{lemma:control-visits-all-k-skipped-chains} gives $\PR[\mc, \bmu]{\compt{\mathcal{K}}} \leq \delta_{\mathcal{K}}$. \\\\
Finally, choosing $\delta_{\mathcal{K}} \eqset \frac{\delta}{4}$, $\delta_{\mc}^{(k)} \eqset \delta_{\bpi}^{(k)} \eqset \frac{\eps \delta}{8}$, and taking the maximum of the three sample sizes,
\begin{equation}
\begin{split}
\label{equation:pssg-actual-ub}
m \geq \frac{C_{\mathsf{ps}}}{\pimin \eps^2} \max \set{ \frac{1}{\pssg},  \nrm{\mc}_{\bpi} \max_{k \in \ceil{2/\eps}} \set{\frac{\nrm{\mc^k}_1}{k} }} \ln \left( \frac{d \sqrt{\pimin^{-1}}}{\eps^2 \delta} \right), C_{\mathsf{ps}} \in \R^+
\end{split}
\end{equation}
is sufficient to control the error up to absolute error $\eps$ and confidence $1 - \delta$.

\begin{remark}
Notice that for all $k \in \N$, 
$$\nrm{\mc^k}_1 = \max_{i \in [d]} \set{\sum_{\ell=1}^d\mc^k(\ell, i)} \leq \frac{1}{\pimin}\max_{i \in [d]} \set{\sum_{\ell=1}^d \pi_\ell\mc^k(\ell, i)} = \nrm{\mc}_{\bpi},$$ 
and
$\nrm{\mc^k}_1 \leq d$, so that
$\frac{\nrm{\mc^k}_1}{k} \leq \min \set{d, \nrm{\mc}_{\bpi}}$;
this justifies the appearance of
$\mathcal{C}(\mc) = \nrm{\mc}_{\bpi} \min \set{d, \nrm{\mc}_{\bpi}}$
in the upper bound.
\end{remark}

\subsubsection{Proof of Theorem~\ref{theorem:pseudo-sg-lb}}

\citet{kazakos1978bhattacharyya}
proposed an efficient method
for recursively computing the Hellinger distance $H^2(\X, \Y)$
between two trajectories $\X = (X_1, \dots, X_m), \Y = (Y_1, \dots, Y_m)$ of length $m$ sampled respectively from two
Markov chains $(\mc_0, \bmu_0)$ and $(\mc_1, \bmu_1)$ in terms of
the entry-wise geometric mean of their transition matrices and initial distributions, 
which we reproduce in Lemma~\ref{lemma:mc-hellinger-recursive}.
\citet[Proof~of~Claim~2]{daskalakis2017testing}
used this result to
upper bound the Hellinger distance in terms of the spectral radius $\rho$
for symmetric stationary Markov chains:
\begin{equation}
\begin{split}
1 - H^2(\X, \Y) &\geq \frac{\rho^m}{d}. \\
\end{split}
\end{equation}
For $0 < \alpha < \frac{1}{8}$, consider the following family of {symmetric} stochastic matrices of size $d \geq 4$:
\begin{equation}
\begin{split}
\mc(\alpha) = \begin{pmatrix}
1 - \alpha & \frac{\alpha}{d-1} & \cdots & \cdots & \frac{\alpha}{d-1} \\
\frac{\alpha}{d-1} & 1/2 - \frac{\alpha}{d-1} & \frac{1}{2(d-2)} & \cdots & \frac{1}{2(d-2)} \\
\vdots & \frac{1}{2(d-2)} & \ddots &  & \frac{1}{2(d-2)} \\
\vdots & \vdots & & & \vdots \\
\frac{\alpha}{d-1} & \frac{1}{2(d-2)} & \frac{1}{2(d-2)} & \cdots & 1/2 - \frac{\alpha}{d-1}
\end{pmatrix}
\end{split}.
\end{equation}
Being {doubly-stochastic}, all the chains described by this family are {symmetric}, {reversible}, and have $\bpi = (1/d , \dots, 1/d)$ as their {stationary distribution}. The eigenvalues of $\mc(\alpha)$ are given by $\lambda_1 = 1, \lambda_{\alpha, 1} = 1 - \frac{d}{d-1} \alpha, \lambda_{\alpha, 2} = 1 - \left( \frac{d-1}{2(d-2)} + \frac{\alpha}{d-1} \right)$. Note that $\lambda_{\alpha,1} > \lambda_{\alpha, 2}$ whenever $\alpha < \frac{d-1}{2(d-2)}$,
and so
$\alpha < \frac{1}{4}
\implies
\sg(\mc(\alpha)) = \frac{d}{d-1} \alpha$. Notice that although the constructed chains are not {lazy},
we still have
$\lambda_{\alpha, 1} > 0$ and $\lambda_{\alpha, 2} > 0$,
so that $\asg(\mc(\alpha)) = \sg(\mc(\alpha))$ and is (by Lemma~\ref{lemma:pssg-reversible}) within a
factor of
$2$
of $\pssg(\mc(\alpha))$.
Let $\bu=(1,0,\ldots,0) \trn $ and $ \bv =\frac1{\sqrt{d-1}}(0,1,\ldots,1) \trn$
and
put
$$ p=\frac{\sqrt{\alpha_0 \alpha_1}}{d-1}, q=\frac{1}{2(d-2)}, r=\sqrt{(1-\alpha_0)(1-\alpha_1)},  s = \sqrt{ \left( 1/2 - \frac{\alpha_0}{d-1} \right)\left(1/2 - \frac{\alpha_1}{d-1} \right)}.$$ 
We proceed to compute
\begin{equation}
\begin{split}
\left[ \mc(\alpha_0), \mc(\alpha_1) \right]_{\surd} &= \begin{pmatrix}
r & p & \cdots & \cdots & p \\
p & s & q & \cdots & q \\
\vdots & q & \ddots &  & q \\
\vdots & \vdots & & & \vdots \\
p & q & q & \cdots & s \\
\end{pmatrix} 
\\ &= (r - s + q )\bu \bu \trn + p \sqrt{d-1} (\bu \bv \trn + \bv \bu \trn) + (d-1) q \bv \bv \trn +(s-q)\identity
.
\end{split}
\end{equation}
So, $\left[ \mc(\alpha_0), \mc(\alpha_1) \right]_{\surd} -(s-q)\identity$ has rank $\leq 2$ and its
operator
restriction to
the subspace
$\operatorname{span}\set{\bu,\bv }$ has a matrix representation $\boldsymbol{R} = \begin{pmatrix}r - s+q & p \sqrt{d-1}  \\ p \sqrt{d-1}  & (d-1)q\end{pmatrix}$. For $d \geq 4$, and since $r - s + q >0$, $\boldsymbol{R}$ is entry-wise positive. Hence
\begin{equation}
\begin{split}
\rho \eqset \rho \left( \left[ \mc(\alpha_0), \mc(\alpha_1) \right]_{\surd} \right) &=s-q+\rho(\boldsymbol{R}),
\end{split}
\end{equation}
with 
\begin{equation}
\begin{split}
\rho(\boldsymbol{R}) &= \max \set{ \frac{\Tr(\boldsymbol{R})}{2} + \sqrt{\frac{\Tr^2(\boldsymbol{R})}{4} - \abs{\boldsymbol{R}}}, \frac{\Tr(\boldsymbol{R})}{2} - \sqrt{\frac{\Tr^2(\boldsymbol{R})}{4} - \abs{\boldsymbol{R}}} }, \\
\end{split}
\end{equation}
so that 
\begin{equation}
\label{equation:rho-definition}
\begin{split}
\rho &=  s-q+\frac{(r-s+d q)+\sqrt{\left[r-s-(d-2)q\right]^2+4(d-1)p^2}}{2} \\ &=\frac{(r+s+1/2)+\sqrt{(r-s-1/2)^2+\frac{4\alpha_0 \alpha_1}{d-1}}}{2}.\\
\end{split}
\end{equation}
Lemma~\ref{lemma:control-rho}
shows that, for $d \geq 4, 0 < \alpha < 1/8, 0 < \eps < 1/2$,
and
$\alpha_0 = \alpha (1 - \eps), \alpha_1 = \alpha (1 + \eps)$,
we have
\beq
\rho = \rho \left( \left[ \mc(\alpha_0), \mc(\alpha_1) \right]_{\surd} \right) \ge
1 - 6 \frac{\alpha \eps^2}{d-1}.
\eeq
The minimax risk for the problem of estimating the pseudo-spectral gap
is defined as
\begin{equation}
\begin{split}
\mathcal{R}_m^{\mathsf{ps}} \eqdef \inf_{\estpssg} \sup_{(\bmu, \mc) \in \M_{d, \pssg, \pimin}} \PR[\mc, \bmu]{\abs{\estpssg(X_1, \dots, X_m) - \pssg(\mc)} > \eps},
\end{split}
\end{equation}
where the $\inf$ is taken over all measurable functions $\estpssg: (X_1, \dots, X_m) \to (0, 1)$,
and the $\sup$ over the set $\M_{d, \pssg, \pimin}$ of $d$-state Markov chains whose minimum stationary probability is $\pimin$,
and of pseudo-spectral gap at least $\pssg$. Using Le Cam's two point method \citep[Chapter~2]{tsybakov2009introduction},
\begin{equation}
\begin{split}
\mathcal{R}_m^{\mathsf{ps}} &\geq \frac{1}{2} \left( 1 - \sqrt{H^2 \left( \X \sim \mc(\alpha_0), \Y \sim \mc(\alpha_1) \right)} \right) \geq \frac{1}{2} \left( 1 - \sqrt{1 - \frac{\rho^m}{d}} \right) \geq \frac{\rho^m}{4 d} \\
&\geq \frac{\expo{m \ln \left( 1 - 6 \frac{\eps^2 \alpha}{d} \right)}}{4d} \geq \frac{\expo{ - 9  \frac{ m\eps^2 \alpha}{d}}}{4d},
\end{split}
\end{equation}
where the last inequality
holds because
$6 \frac{\eps^2 \alpha}{d} \leq 3/8$ and
$\ln(1 - t) \geq -\frac{3}{2} t$,
$t \in \left(0, \frac12 \right)$.
Thus, for $\delta < \frac{1}{4d}$,
a sample of size
at least $m = \Omega \left( \frac{d}{9 \cdot \eps^2 \alpha} \ln \left( \frac{1}{4d\delta}\right) \right)$
is necessary to achieve a
confidence of $1-\delta$.

\begin{lemma}
\label{lemma:control-secondary-square-root}
For $d \geq 4$, $0 < \alpha_0, \alpha_1 < 1/4$, and $s = \sqrt{ \left( 1/2 - \frac{\alpha_0}{d-1} \right)\left(1/2 - \frac{\alpha_1}{d-1} \right)}$, we have
\begin{equation}
\begin{split}
\frac{1}{2} \left( 1 - \frac{\alpha_0 + \alpha_1}{d-1} - 2 \left(\frac{\alpha_0 - \alpha_1}{d-1}\right)^2 \right) \leq s \leq \frac{1}{2} \left( 1 - \frac{\alpha_0 + \alpha_1}{d-1} \right).
\end{split}
\end{equation}
\begin{proof}
We begin by showing that $s \leq \frac{1}{2} \left( 1 - \frac{\alpha_0 + \alpha_1}{d-1} \right)$. By the AM-GM inequality,
$4 \alpha_0 \alpha_1 \leq (\alpha_0 + \alpha_1)^2$, whence
\begin{equation}
\begin{split}
\left( \sqrt{1 - \frac{2\alpha_0}{d-1}} \sqrt{1 - \frac{2\alpha_1}{d-1}} \right)^2 &= 1 - \frac{2(\alpha_0 + \alpha_1)}{d-1} + \frac{4\alpha_0 \alpha_1}{(d-1)^2} \\
&\leq 1 - \frac{2(\alpha_0 + \alpha_1)}{d-1} + \left(\frac{\alpha_0 + \alpha_1}{d-1}\right)^2 \\
& = \left(1 - \frac{\alpha_0 + \alpha_1}{d-1} \right)^2,
\end{split}
\end{equation}
which,
together with
$\frac{\alpha_0 + \alpha_1}{d-1} \leq 1$,
proves the upper bound.
For the lower bound, observe that
\begin{equation}
\begin{split}
\left( 1 -\frac{\alpha_0 + \alpha_1}{d-1} - 2 \left(\frac{\alpha_0 - \alpha_1}{d-1}\right)^2 \right)^2 &= \left( \sqrt{1 - \frac{2 \alpha_0}{d-1}} \sqrt{1 - \frac{2 \alpha_1}{d-1}} \right)^2 \\
& + \left(\frac{\alpha_0 - \alpha_1}{d-1}\right)^2\left[ \frac{4 (\alpha_0 + \alpha_1)}{d-1} + \frac{4(\alpha_0 - \alpha_1)^2}{(d-1)^2} -3 \right].
\end{split}
\end{equation}
Now for $d \geq 4$, $0 < \alpha_0, \alpha_1 < 1/4$,
we have
$ \frac{4 (\alpha_0 + \alpha_1)}{d-1} + 4 \left(\frac{\alpha_0 - \alpha_1}{d-1}\right)^2 < 3$ and the lemma is proved.
\end{proof}
\end{lemma}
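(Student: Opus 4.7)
The plan is to square both inequalities, reducing each to a polynomial condition that can be dispatched by elementary algebra plus AM--GM. I will first introduce the abbreviations $a = \alpha_0/(d-1)$ and $b = \alpha_1/(d-1)$, so that $s = \sqrt{(1/2-a)(1/2-b)}$ and hence
$$4s^2 = (1-2a)(1-2b) = 1 - 2(a+b) + 4ab.$$
Since $d \geq 4$ and $\alpha_0, \alpha_1 < 1/4$, I have $a, b < 1/12$ and thus $a+b < 1/6$; this bound will make each expression being compared to $2s$ strictly positive, which is what justifies the squaring step in both directions.

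For the upper bound, squaring $2s \leq 1 - (a+b)$ yields $1 - 2(a+b) + 4ab \leq 1 - 2(a+b) + (a+b)^2$, which simplifies to $4ab \leq (a+b)^2$, precisely AM--GM. For the lower bound, the right-hand side $1 - (a+b) - 2(a-b)^2$ is at least $5/6 - 2(1/12)^2 > 0$ in the parameter range, so squaring is again valid. Writing $u = 1-(a+b)$ and $v = 2(a-b)^2$, and using the identity $4ab = (a+b)^2 - (a-b)^2$ to rewrite $4s^2 = u^2 - (a-b)^2$, the target $4s^2 \geq (u-v)^2$ becomes, after cancelling $u^2$, the inequality $(a-b)^2 \leq 2uv - v^2$. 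The case $a = b$ is trivial; otherwise I divide by $(a-b)^2$ and reduce to showing $4(1-(a+b)) - 4(a-b)^2 \geq 1$, equivalently $4(a+b) + 4(a-b)^2 \leq 3$.

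This last inequality is immediate from the parameter range: $4(a+b) < 2/3$ and $4(a-b)^2 < 4/144 < 1/36$, summing to well under $3$. I expect no real obstacle; the argument is routine polynomial manipulation. The only things worth flagging are the positivity verifications that license squaring, and the couplng identity $4ab = (a+b)^2 - (a-b)^2$, which is what lets the upper and lower bounds be expressed in terms of the single deviation $(a-b)^2$ and explains the shape of the claim.
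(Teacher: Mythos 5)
Your proof is correct and follows essentially the same route as the paper: square both sides (with the needed positivity check), dispatch the upper bound via AM--GM, and reduce the lower bound to the inequality $4\frac{\alpha_0+\alpha_1}{d-1} + 4\left(\frac{\alpha_0-\alpha_1}{d-1}\right)^2 \leq 3$, which is exactly the bracketed condition in the paper's algebraic identity. The only cosmetic slips (writing $4/144 < 1/36$ instead of $=$, and noting that positivity is not actually needed to pass from the squared lower bound back to the original since $x \le |x|$) do not affect the argument.
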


\begin{lemma}
\label{lemma:control-main-square-root}
For $d \geq 4, 0 < \alpha < 1/8, 0 < \eps < 1/2$ and
$\alpha_0 = \alpha (1 - \eps), \alpha_1 = \alpha (1 + \eps), r = \sqrt{(1 - \alpha_0)(1 - \alpha_1)}$, we have
\begin{equation}
\begin{split}
\label{eq:control-main-square-root}
\sqrt{(r-1)^2 + \frac{4 \alpha_0 \alpha_1 + (\alpha_0 + \alpha_1)(r-1)}{d-1} + \frac{\alpha_0 \alpha_1}{(d-1)^2}} \geq (1 - r) + \left[ \frac{4 \alpha_0 \alpha_1}{1 - r} - \frac{3}{2} (\alpha_0 + \alpha_1) \right]\frac{1}{d-1}
.
\end{split}
\end{equation}
\begin{proof}
Squaring \eqref{eq:control-main-square-root}, the claim will follow immediately from
\begin{equation}
\label{equation:control-main-square-root-sufficient-condition}
\begin{split}
2(\alpha_0 + \alpha_1)(1 - r) + \frac{\alpha_0 \alpha_1}{d-1} \geq \left( \frac{4 \alpha_0 \alpha_1}{1 - r} - \frac{3}{2} (\alpha_0 + \alpha_1) \right)^2 \frac{1}{d-1} + 4 \alpha_0 \alpha_1
.
\end{split}
\end{equation}
Fix $\alpha_0, \alpha_1$
and define
$$\varphi(t) = 2(\alpha_0 + \alpha_1)(1 - r) - 4 \alpha_0 \alpha_1 + t \left[ \alpha_0 \alpha_1 - \left( \frac{4 \alpha_0 \alpha_1}{1 - r} - \frac{3}{2} (\alpha_0 + \alpha_1) \right)^2 \right],
\qquad
t \in (0, +\infty)
.
$$
The sign of $\frac{d \varphi(t)}{dt}$ is the
same as that of
$\alpha_0 \alpha_1 - \left( \frac{4 \alpha_0 \alpha_1}{1 - r} - \frac{3}{2} (\alpha_0 + \alpha_1) \right)^2$.
Since $\eps < \frac{1}{2}$,
it is straightforward to verify
that $4(1 - \eps)(1 + \eps) - 3 \leq \sqrt{(1 + \eps)(1 - \eps)}$.
Additionally, $r^2 = 1 - 2 \alpha + \alpha^2(1 + \eps)(1 - \eps) \leq \left(1 - \alpha \right)^2$, so that
$\frac{1}{1-r} \leq \frac{1}{\alpha}$, and $\frac{4(1- \eps)(1 + \eps) \alpha}{1 - r} - 3 \leq \sqrt{(1 + \eps)(1 - \eps)}$.
Squaring this inequality yields,
for our
range of parameters, $\frac{d \varphi(t)}{dt} \geq 0$,
and so
$\varphi$ is minimized at
$t = 0$, and $\varphi(t) \geq \varphi(0) = 2 (\alpha_0 + \alpha_1)(1 - r) - 4 \alpha_0 \alpha_1$.

By the AM-GM inequality,
$\sqrt{\alpha_0 \alpha_1} \leq \frac{\alpha_0 + \alpha_1}{2}
\implies
(1- \alpha_0)(1 - \alpha_1) \leq \left(1 - \frac{\alpha_0 + \alpha_1}{2} \right)^2$,
and so $r \leq 1 - \frac{\alpha_0 + \alpha_1}{2}$,
and another application of AM-GM yields
$1 - r \geq 2\frac{\alpha_0 \alpha_1}{\alpha_0 + \alpha_1}$.
We conclude that
$\varphi(0) \geq 0$,
which proves the claim.
\end{proof}
\end{lemma}

\begin{lemma}
\label{lemma:control-micro}
For $d \geq 4, 0 < \alpha < 1/8, 0 < \eps < 1/2$
and
$\alpha_0 = \alpha (1 - \eps), \alpha_1 = \alpha (1 + \eps), r = \sqrt{(1 - \alpha_0)(1 - \alpha_1)}$,
we have
\begin{equation}
\begin{split}
\alpha_0 + \alpha_1 - 2\frac{\alpha_0 \alpha_1}{1-r} \leq 4 \eps^2 \alpha
.
\end{split}
\end{equation}
\begin{proof}
Observe that
$\alpha (\alpha \eps^2 + 2) \leq 1$
holds
for our assumed
range of parameters,
which implies
$2(1 - \alpha) - \eps^2 \alpha^2 \geq 1$, and further
\begin{equation}
\begin{split}
(1 - \alpha -\alpha^2 \eps^2)^2 = (1 - \alpha)^2 - \alpha^2 \eps^2 [2(1 - \alpha) - \eps^2 \alpha^2] \leq (1 - \alpha)^2 - \alpha^2 \eps^2 = r^2.
\end{split}
\end{equation}
As a consequence, $\frac{1}{1-r} \geq \frac{1}{\alpha(1 + \alpha \eps^2)} \geq \frac{1 - \alpha \eps^2}{\alpha}$, and 
\begin{equation}
\begin{split}
\alpha_0 + \alpha_1 - 2\frac{\alpha_0 \alpha_1}{1-r} \leq 2 \alpha - 2 \alpha(1 + \eps)(1 - \eps)(1 - \alpha \eps^2) = 2 \alpha \eps^2 (1 + \alpha - \alpha \eps^2 ) \leq 4 \alpha \eps^2.
\end{split}
\end{equation}
\end{proof}
\end{lemma}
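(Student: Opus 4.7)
\textbf{Proof plan for Lemma~\ref{lemma:control-micro}.}

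The plan is to substitute, rearrange, and then produce a sharp enough lower bound on $r$ to cover the small leftover. First, I would exploit the symmetry $\alpha_0+\alpha_1=2\alpha$ and $\alpha_0\alpha_1=\alpha^2(1-\eps^2)$ to rewrite the left-hand side as
\begin{equation*}
\alpha_0+\alpha_1-\frac{2\alpha_0\alpha_1}{1-r}\;=\;2\alpha\left(1-\frac{\alpha(1-\eps^2)}{1-r}\right),
\end{equation*}
so the target inequality is equivalent to $\dfrac{\alpha(1-\eps^2)}{1-r}\ge 1-2\eps^2$, which in turn boils down to an \emph{upper} bound on $1-r$ of the form $1-r\le\alpha(1+c\alpha\eps^2)$ for a small constant $c$.

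Second, I would get that upper bound by producing a matching lower bound for $r$. Since $r^2=(1-\alpha)^2-\alpha^2\eps^2$, the natural candidate is $r\ge 1-\alpha-\alpha^2\eps^2$. To verify this, I would square the candidate and compute
\begin{equation*}
(1-\alpha-\alpha^2\eps^2)^2-r^2 \;=\; -\alpha^2\eps^2\bigl(1-2\alpha-\alpha^2\eps^2\bigr),
\end{equation*}
and note that the bracket is strictly positive for $\alpha<1/8$, $\eps<1/2$ (indeed $2\alpha+\alpha^2\eps^2<1/4+1/256<1$). Hence $r\ge 1-\alpha(1+\alpha\eps^2)$, which gives $\dfrac{1}{1-r}\ge\dfrac{1}{\alpha(1+\alpha\eps^2)}\ge\dfrac{1-\alpha\eps^2}{\alpha}$ after one application of $\frac{1}{1+x}\ge 1-x$.

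Third, I would plug this back into the rewritten expression to conclude. Doing so yields
\begin{equation*}
\alpha_0+\alpha_1-\frac{2\alpha_0\alpha_1}{1-r}\;\le\;2\alpha-2\alpha(1-\eps^2)(1-\alpha\eps^2)\;=\;2\alpha\eps^2\bigl(1+\alpha(1-\eps^2)\bigr),
\end{equation*}
and the bracket is at most $1+\alpha<9/8<2$ in the assumed range, giving the claimed $4\alpha\eps^2$ bound with room to spare.

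The only delicate step is the second one: loose bounds on $r$ (for instance the AM-GM estimate $r\le 1-\alpha$ combined with $\sqrt{a^2-b^2}\ge a-b^2/(2a)$) would leave a factor bigger than $4$ on the right, so one must pick the quadratic candidate $1-\alpha-\alpha^2\eps^2$ and then carefully check that $2\alpha+\alpha^2\eps^2<1$ under the stated hypotheses; that check is exactly where the constraints $\alpha<1/8$ and $\eps<1/2$ are consumed.
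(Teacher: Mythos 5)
Your proposal is correct and follows essentially the same route as the paper: the crucial step in both is the lower bound $r \geq 1-\alpha-\alpha^2\eps^2$ obtained by comparing squares (your positivity check of $1-2\alpha-\alpha^2\eps^2$ is exactly the paper's condition $2(1-\alpha)-\eps^2\alpha^2\geq 1$), followed by $\frac{1}{1-r}\geq\frac{1-\alpha\eps^2}{\alpha}$ and the same final expansion $2\alpha\eps^2\bigl(1+\alpha(1-\eps^2)\bigr)\leq 4\alpha\eps^2$. The only difference is cosmetic: your initial symmetric-function reformulation is not actually needed, since you plug the bound back into the original expression just as the paper does.
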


\begin{lemma}
\label{lemma:control-rho}
For $d \geq 4, 0 < \alpha < 1/8, 0 < \eps < 1/2$
$\rho$ as defined in \eqref{equation:rho-definition},
and
$\alpha_0 = \alpha (1 - \eps), \alpha_1 = \alpha (1 + \eps), r = \sqrt{(1 - \alpha_0)(1 - \alpha_1)}$,
we have
\begin{equation}
\begin{split}
\rho \geq 1 - \left[ (\alpha_0 + \alpha_1) - \frac{2 \alpha_0 \alpha_1}{1 - r} \right] \frac{1}{d-1} - \frac{1}{2}\left(\frac{\alpha_0 - \alpha_1}{d-1}\right)^2 \geq 1 - 6 \frac{\alpha \eps^2}{d-1}.
\end{split}
\end{equation}
\begin{proof}
\begin{equation}
\begin{split}
2 \rho &= (r+s+1/2)+\left((r-1/2)^2 + s^2 - 2(r-1/2)s + \frac{4\alpha_0 \alpha_1}{d-1} \right)^{1/2}\\
&\geq (r+s+1/2)+\bigg((r-1/2)^2 + \left(1/2 - \frac{\alpha_0}{d-1}\right) \left(1/2 - \frac{\alpha_1}{d-1} \right) \\
&- 2(r-1/2)\left(1 - \frac{\alpha_0 + \alpha_1}{d-1} \right) + \frac{4\alpha_0 \alpha_1}{d-1} \bigg)^{1/2} \\
&= (r+s+1/2)+\left((r-1)^2 + \frac{4 \alpha_0 \alpha_1 + (\alpha_0 + \alpha_1)(r-1)}{d-1} + \frac{\alpha_0 \alpha_1}{(d-1)^2}\right)^{1/2},\\
\end{split}
\end{equation}
where the inequality is due to Lemma~\ref{lemma:control-secondary-square-root}. Invoking Lemmas~\ref{lemma:control-secondary-square-root}, and \ref{lemma:control-main-square-root}, we have
\begin{equation}
\begin{split}
2 \rho &\geq r + \frac{1}{2} \left( 1 - \frac{\alpha_0 + \alpha_1}{d-1} - 2 \left(\frac{\alpha_0 - \alpha_1}{d-1}\right)^2 \right) + \\
 & 1/2 + (1 - r) + \left[ \frac{4 \alpha_0 \alpha_1}{1 - r} - \frac{3}{2} (\alpha_0 + \alpha_1) \right]\frac{1}{d-1}, \\
\rho &\geq 1 - \left[ (\alpha_0 + \alpha_1) - \frac{2 \alpha_0 \alpha_1}{1 - r} \right] \frac{1}{d-1} - \frac{1}{2} \left( \frac{\alpha_0 - \alpha_1}{d-1} \right)^2.
\end{split}
\end{equation}
Finally,
Lemma~\ref{lemma:control-micro}
implies a lower bound on $\rho$:
\begin{equation}
\begin{split}
\rho \geq 1 - 4 \frac{\alpha \eps^2}{d-1} - 4 \frac{\alpha^2 \eps^2}{2(d-1)^2} \geq  1 - 6 \frac{\alpha \eps^2}{d-1}.
\end{split}
\end{equation}
\end{proof}
\end{lemma}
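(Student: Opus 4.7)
The plan is to lower-bound the explicit formula for $\rho$ in \eqref{equation:rho-definition} by handling its two pieces separately: the linear combination $r + s + 1/2$ (which approaches $2$ as $\alpha, 1/(d-1)\to 0$) and the square-root term $\sqrt{(r-s-1/2)^2 + 4\alpha_0\alpha_1/(d-1)}$. Since the target is a bound of the form $1 - O(\alpha\eps^2/(d-1))$, the key is to expand $(r-s-1/2)^2$ using an accurate estimate of $s$ in terms of $\alpha_0, \alpha_1, d$, then convert the resulting square root into $(1-r)$ plus a tractable $1/(d-1)$ correction that combines with the $r + s + 1/2$ piece.

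First I would establish two auxiliary estimates. For $s = \tfrac12\sqrt{(1-2\alpha_0/(d-1))(1-2\alpha_1/(d-1))}$, squaring and applying AM-GM to $4\alpha_0\alpha_1 \leq (\alpha_0+\alpha_1)^2$ yields the upper bound $s \leq \tfrac12(1 - (\alpha_0+\alpha_1)/(d-1))$, while the lower bound $s \geq \tfrac12(1 - (\alpha_0+\alpha_1)/(d-1) - 2((\alpha_0-\alpha_1)/(d-1))^2)$ follows by squaring and noting that the residual has the same sign as $3 - 4(\alpha_0+\alpha_1)/(d-1) - 4((\alpha_0-\alpha_1)/(d-1))^2$, which is positive in our parameter range. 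The second estimate is the square-root inequality $\sqrt{(r-1)^2 + A/(d-1) + \alpha_0\alpha_1/(d-1)^2} \geq (1-r) + B/(d-1)$ with $A = 4\alpha_0\alpha_1 + (\alpha_0+\alpha_1)(r-1)$ and $B = 4\alpha_0\alpha_1/(1-r) - 3(\alpha_0+\alpha_1)/2$. After squaring, this reduces to a linear inequality in $t = 1/(d-1)$, and I would analyze the derivative in $t$ to monotonicity-reduce it to the single check $2(\alpha_0+\alpha_1)(1-r) - 4\alpha_0\alpha_1 \geq 0$, which is exactly AM-GM applied to $(1-\alpha_0)(1-\alpha_1) \leq (1 - (\alpha_0+\alpha_1)/2)^2$.

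With these in place, I would substitute the upper bound on $s$ into $(r-s-1/2)^2$ inside the radical, collect terms to rewrite the radicand in the normal form $(r-1)^2 + A/(d-1) + \alpha_0\alpha_1/(d-1)^2$, and apply the square-root estimate. Adding $r + s + 1/2$ (using the lower bound on $s$), dividing by $2$, and simplifying telescopes the $r$ and $(1-r)$ contributions into $1$, leaving exactly the first stated bound $\rho \geq 1 - [(\alpha_0+\alpha_1) - 2\alpha_0\alpha_1/(1-r)]/(d-1) - \tfrac12 ((\alpha_0-\alpha_1)/(d-1))^2$. For the second inequality, I would verify $\alpha_0+\alpha_1 - 2\alpha_0\alpha_1/(1-r) \leq 4\alpha\eps^2$ by exploiting the bound $1 - r \geq \alpha(1 - \alpha\eps^2)$, which itself follows from checking $(1 - \alpha - \alpha^2\eps^2)^2 \leq r^2 = (1-\alpha)^2 - \alpha^2\eps^2$. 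Combining with $(\alpha_0 - \alpha_1)^2/(2(d-1)^2) = 2\alpha^2\eps^2/(d-1)^2 \leq 2\alpha\eps^2/(d-1)$ gives the desired $6\alpha\eps^2/(d-1)$.

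The main obstacle will be the square-root estimate: the coefficient $B = 4\alpha_0\alpha_1/(1-r) - 3(\alpha_0+\alpha_1)/2$ must have the right sign, and the quadratic expansion arising from squaring must be controlled globally in $1/(d-1)$. If $B$ slipped below zero the lower bound would collapse, so the parameter restriction $\alpha < 1/8$, $\eps < 1/2$ is precisely what forces $4(1-\eps)(1+\eps)\alpha/(1-r) - 3 \leq \sqrt{(1-\eps)(1+\eps)}$ and keeps the auxiliary function monotone. Tracking these sign conditions carefully through the chain of estimates, and making sure the $\alpha_0\alpha_1/(d-1)^2$ term is handled rather than discarded, is the technical bottleneck of the proof.
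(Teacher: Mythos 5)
Your proposal is correct and follows essentially the same route as the paper: the same upper/lower bounds on $s$ (Lemma~\ref{lemma:control-secondary-square-root}), the same square-root estimate with $B = \frac{4\alpha_0\alpha_1}{1-r} - \frac{3}{2}(\alpha_0+\alpha_1)$ reduced via monotonicity in $1/(d-1)$ to an AM-GM check (Lemma~\ref{lemma:control-main-square-root}), and the same final estimate $\alpha_0+\alpha_1 - \frac{2\alpha_0\alpha_1}{1-r} \leq 4\alpha\eps^2$ (Lemma~\ref{lemma:control-micro}). One small transcription slip: the bound you need is $1-r \leq \alpha(1+\alpha\eps^2)$, i.e.\ $\frac{1}{1-r} \geq \frac{1-\alpha\eps^2}{\alpha}$ (an upper bound on $1-r$, which is what your cited check $(1-\alpha-\alpha^2\eps^2)^2 \leq r^2$ actually delivers), not ``$1-r \geq \alpha(1-\alpha\eps^2)$'' as written.
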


\subsection{Analysis of the empirical procedure}

Before delving into the proofs, we provide some preliminary motivation and analysis
behind
the main
estimation
procedure in Algorithm~\ref{algorithm:pseudo-spectral-gap-estimator}.

Although we only have access to a single path from $\mc$,
by considering
skipped chains with offsets $r \in \set{0, \dots, k -1}$
we effectively obtain
$k$ different paths from which to estimate $\mc^k$.
Define
the averaged estimator
\begin{equation}
\label{equation:definition-average-sg-k-estimator}
\begin{split}
\widetilde{\sg}^\dagger_{k, \alpha} (X_1, \dots, X_m) \eqdef \frac{1}{k}\sum_{r = 0}^{k-1} \widehat{\sg}_{k,r, \alpha}^\dagger \left(X_{t}^{(k,r)}, 1 \leq t \leq \floor{(m-r)/k} \right),
\end{split}
\end{equation}
where $\widehat{\sg}_{k,r, \alpha}^\dagger$ is the spectral gap of the multiplicative reversiblization of the $\alpha$-smoothed empirical transition matrix constructed from $X_{t}^{(k,r)}$:
\begin{equation}
\begin{split}
\label{equation:definition-smoothed-estimators}
\estmc^{(k, r, \alpha)}(i, j) \eqdef \frac{N_{i j}^{(k,r)} + \alpha}{N_i^{(k, r)} + d \alpha}, \qquad \estpi^{(k, r, \alpha)}_i \eqdef \frac{N_{i}^{(k,r)} + d \alpha}{\floor{(m -r) / k} + d^2 \alpha}.
\end{split}
\end{equation}
Notice that
$\estpi^{(k, r, \alpha)}_i$
requires more aggressive
smoothing
than for the transition matrix in order
to ensure stationarity.
We now derive the smoothed empirical form of \eqref{eq:LktLk} for a generic skipping rate $k$ and offset $r$:
\begin{equation}
\begin{split}
\left[\left(\widehat{\Lsym}^{(k,r, \alpha)}\right) \trn \widehat{\Lsym}^{(k,r, \alpha)} \right]_{(i,j)} = \frac{1}{\sqrt{\left(N_i^{(k,r)} + d \alpha \right) \left( N_j^{(k,r)} + d \alpha \right)}} \sum_{\ell = 1}^{d} \frac{\left(N^{(k,r)}_{\ell i} + \alpha \right) \left(  N^{(k,r)}_{\ell j} + \alpha \right)}{N_\ell^{(k,r)} + d \alpha},
\end{split}
\end{equation}
where $N_i^{(k, r)}, N_{i j}^{(k, r)}$ are defined in (\ref{equation:def-number-of-visits}, \ref{equation:def-number-of-transitions}).
The expression can be alternatively be written in its vectorized form:
\begin{equation}
\begin{split}
\left(\widehat{\Lsym}^{(k,r, \alpha)}\right) \trn \widehat{\Lsym}^{(k,r, \alpha)}  &= \left( \mathbf{D}_N^{(k,r,\alpha)} \right)^{-1/2} \left(\mathbf{N}^{(k ,r, \alpha)}\right) \trn \left( \mathbf{D}_N^{(k,r,\alpha)} \right)^{-1} \mathbf{N}^{(k ,r, \alpha)} \left( \mathbf{D}_N^{(k,r,\alpha)} \right)^{-1/2} \\
\text{with } \mathbf{N}^{(k,r,\alpha)} &\eqdef \left[N_{i j}^{(k ,r)} + \alpha \right]_{(i,j)}, \; \mathbf{D}_N^{(k,r,\alpha)} \eqdef \diag \left(N_1^{(k,r)} + d \alpha, \dots, N_d^{(k,r)} + d \alpha \right).
\end{split}
\end{equation}

\subsubsection{Proof of Theorems~\ref{theorem:pssg-confidence-intervals} and \ref{theorem:pssg-confidence-intervals-full}}
We begin with a decomposition very similar to the one employed in
Section~\ref{section:pseudo-sg-ub}.
From
the definition of $\widetilde{\sg}^\dagger_{k, \alpha}$ in (\ref{equation:definition-average-sg-k-estimator}),
it follows
that for all $K \in \mathbb{N}$,
we have
\begin{equation}
\begin{split}
\abs{\estpssgK^{(\alpha)} - \pssg} &\leq \frac{1}{K} + 2 \max_{k \in [K]} \set{\frac{1}{k^2} \sum_{r = 0}^{k - 1} \left( \nrm{\EE{\mc}^{(k, r, \alpha)}}_2 + 2 \nrm{\EE{\bpi}^{(k,r, \alpha)}}_2 + \nrm{\EE{\bpi}^{(k,r, \alpha)}}_2^2 \right) }, \\
\text{ where } &\EE{\mc}^{(k, r, \alpha)} \eqdef \left(\estDpi^{(k,r, \alpha)} \right)^{1/2} \left(\estmc^{(k,r, \alpha)} - \mc^k\right) \left(\estDpi^{(k,r, \alpha)} \right)^{-1/2}, \\
&\EE{\bpi,1}^{(k ,r, \alpha)} = \left(\estDpi^{(k, r, \alpha)}\right)^{1/2}\Dpi^{-1/2} - \identity, \qquad \EE{\bpi,2}^{(k, r, \alpha)} = \Dpi^{1/2}\left(\estDpi^{(k, r, \alpha)}\right)^{-1/2} - \identity, \\
\end{split}
\end{equation}
and $\estmc^{(k,r, \alpha)}, \estDpi^{(k, r, \alpha)}$ are the $\alpha$-smoothed estimators for $\mc^k$ and $\Dpi$ constructed from the sample path $X_t^{(k, r)}$. 

\citet[Section~3.3]{cho2001comparison} prove the perturbation
bound
$$\nrm{\estbpi - \bpi}_{\infty} \leq \hat{\kappa} \nrm{\estmc - \mc}_{\infty},$$
where $\hat{\kappa} \eqdef \frac{1}{2} \max_{j \in [d]} \set{\estginv_{j,j} - \min_{i \in [d]} \set{\estginv_{i,j}}}$ and $\estginv$ is the empirical (Drazin) \emph{group inverse} of $\estmc$ \citep{meyer1975role}.
Since computing
$\hat{\kappa}$
via matrix inversion 
is computationally expensive,
we show in Lemma~\ref{lemma:non-reversible-kappa-control} that
\begin{equation}
\begin{split}
\nrm{\estbpi^{(k,r, \alpha)} - \bpi}_\infty \leq \frac{C_\mathcal{K}}{\pssg\left(\estmc^{(k,r, \alpha)}\right)} \ln \left( 2 \sqrt{\frac{2 (\floor{(m -r) / k} + d^2 \alpha)}{\Nmin^{(k,r)} + d \alpha}} \right) \nrm{\estmc^{(k,r, \alpha)} - \mc^k}_\infty,
\end{split}
\end{equation}
where $C_\mathcal{K}$
is the universal constant from Lemma~\ref{lemma:control-visits-all-k-skipped-chains}.
From the triangle inequality,
\begin{equation}
\begin{split}
&\nrm{\estmc^{(k,r, \alpha)}(i, \cdot) - \mc^k(i, \cdot)}_1 \\
&\leq \frac{N_i^{(k,r)}}{N_i^{(k,r)} + d \alpha} \nrm{\estmc^{(k,r)}(i, \cdot) - \mc^k(i, \cdot)}_1 + \frac{\alpha d \nrm{ (1/d) \cdot \unit - \mc^k(i, \cdot)}_1}{N_i^{(k,r)} + d \alpha},
\end{split}
\end{equation}
and writing
\begin{equation}
\begin{split}
\hat{d}^{(k,r, \alpha)} &\eqdef 4 \tau_{\delta/d, \floor{(m - r)/ k }} \sqrt{\frac{d }{\Nmin^{(k,r)} + d \alpha}} + \frac{ 2 \alpha d}{\Nmin^{(k,r)} + d \alpha}, \\
\hat{b}^{(k,r, \alpha)} &\eqdef \frac{C_\mathcal{K}}{\pssg\left(\estmc^{(k,r, \alpha)}\right)} \ln \left( 2 \sqrt{\frac{2 (\floor{(m -r) / k} + d^2 \alpha)}{\Nmin^{(k,r)} + d \alpha }} \right) \hat{d}^{(k,r, \alpha)} ,
\end{split}
\end{equation}
it follows via
Lemma~\ref{lemma:mc-empirical-learning-infinity-norm}
that $\nrm{\estbpi^{(k,r,\alpha)} - \bpi}_{\infty} \leq \hat{b}^{(k,r,\alpha)}$.

Define $\hat{c}^{(k,r,\alpha)} \eqdef \frac{1}{2} \max \bigcup_{i \in [d]} \set{\frac{\hat{b}^{(k,r,\alpha)}}{\estbpi^{(k,r,\alpha)}(i)}, \frac{\hat{b}^{(k,r,\alpha)}}{\left[\estbpi^{(k,r,\alpha)}(i) - \hat{b}^{(k,r,\alpha)}\right]_+}}$, and recall from \citet{hsu-mixing-ext},
that
$\nrm{\estbpi^{(k,r,\alpha)} - \bpi}_\infty \leq \hat{b}^{(k,r,\alpha)}
\implies
\nrm{\EE{\bpi}^{(k,r,\alpha)}}_2 \leq \hat{c}^{(k,r,\alpha)}$.
By sub-multiplicativity of the spectral norm and norm properties of diagonal matrices, we have
\begin{equation}
\begin{split}
\nrm{\EE{\mc}^{(k, r, \alpha)}}_2 \leq \frac{\Nmax^{(k,r)} + d \alpha}{ \floor{(m-r)/k} + d^2 \alpha} \sqrt{d} \nrm{ \estmc^{(k, r, \alpha)} - \mc^k }_\infty \frac{ \floor{(m-r)/k} + d^2 \alpha}{\Nmin^{(k,r)} + d \alpha}.
\end{split}
\end{equation}
Putting
\begin{equation}
\begin{split}
\hat{a}^{(k,r,\alpha)} \eqdef \sqrt{d} \frac{\Nmax^{(k,r)} + d \alpha}{\Nmin^{(k,r)} + d \alpha} \hat{d}^{(k,r, \alpha)},
\end{split}
\end{equation}
we have that
$\nrm{\EE{\mc}^{(k, r, \alpha)}}_2 \leq \hat{a}^{(k,r,\alpha)}$
holds with probability at least $1- \delta$.

Turning to the simpler reversible case,
consider the $\alpha$-smoothed version of the estimator $\frac{1}{2} \left( \estmc \rev + \estmc \right)$, where the $(i,j)$th
entry is $\frac{N_{i j} + N_{j i}}{2 N_i}$. Reversibility
allows us to apply Weyl's inequality:
\begin{equation}
\begin{split}
  \abs{\lambda_i\left( \frac{1}{2} \left( \estmc \rev + \estmc \right) \right) - \lambda_i \left( \mc \right)} &= \abs{\lambda_i \left( \frac{1}{2} \left( \widehat{\Lsym} \trn + \widehat{\Lsym} \right) \right) - \lambda_i \left( \Lsym \right)} \leq \nrm{ \widehat{\Lsym} - \Lsym }_2,
  \qquad i\in[d].
\end{split}
\end{equation}
The interval widths can now
be deduced from Corollary~\ref{corollary:learning-time-reversal-empirical-bounds} and Corollary~\ref{corollary:reversible-kappa-control}.

\subsubsection{Asymptotic interval widths}

\paragraph{Non-reversible setting.}

The definition of the pseudo-spectral gap
implies that
$\pssg(\mc^k) \geq k \pssg(\mc)$, and
assuming a smoothing parameter $\alpha < 1/d$,
a straightforward computation (ignoring logarithmic factors) yields
\begin{equation}
\begin{split}
\sqrt{m} \hat{a}^{(k,r,\alpha)} &= \tilde{\bigO} \left(  \frac{\sqrt{k} \nrm{\mc}_{\bpi} d}{\sqrt{\pimin}} \right), \\
 \sqrt{m}  \hat{b}^{(k,r,\alpha)} &\leq \tilde{\bigO} \left( \frac{\sqrt{d}}{\sqrt{k} \pssg \sqrt{\pimin} } \right), \\
 \sqrt{m}  \hat{c}^{(k,r,\alpha)} &\leq \tilde{\bigO} \left( \frac{\sqrt{d}}{\sqrt{k}\pssg \pimin^{3/2}} \right).
\end{split}
\end{equation}
It follows that
\begin{equation}
\begin{split}
\abs{\estpimin^{(\alpha)} - \pimin} &= \tilde \bigO \left(  \frac{\sqrt{d}}{\pssg \sqrt{\pimin m} } \right), \\
 \abs{\estpssgK^{(\alpha)} - \pssg} &= \tilde{\bigO} \left( \frac{1}{K} + \sqrt{\frac{d}{\pimin m}}  \left( \sqrt{d} \nrm{\mc}_{\bpi}  + \frac{1}{\pssg \pimin} \right) \right).
\end{split}
\end{equation}

\paragraph{Reversible setting.}
Here,
\begin{equation}
\begin{split}
\sqrt{m} \hat{a}^{(\alpha)} = \tilde{\bigO} \left(  \frac{\nrm{\mc}_{\bpi} d}{\sqrt{\pimin}} \right) ,\qquad \sqrt{m}  \hat{b}^{(\alpha)} = \tilde{\bigO} \left( \frac{\sqrt{d}}{\asg \sqrt{\pimin} } \right), \qquad \sqrt{m}  \hat{c}^{(\alpha)} = \tilde{\bigO} \left( \frac{\sqrt{d}}{\asg \pimin^{3/2}} \right),
\end{split}
\end{equation}
so that
\begin{equation}
\begin{split}
  \abs{\estpimin^{(\alpha)} - \pimin} = \tilde{\bigO} \left( \frac{\sqrt{d}}{\asg \sqrt{\pimin m} } \right), \qquad   \abs{\estasg^{(\alpha)} - \asg} = \tilde{\bigO} \left( \sqrt{\frac{d}{\pimin m}}  \left( \sqrt{d} \nrm{\mc}_{\bpi} + \frac{1}{\asg \pimin} \right) \right).
\end{split}
\end{equation}

\begin{remark}
  \label{remark:improvement-empirical-intervals}
  Let us compare the intervals obtained in \citet[Theorem 4.1]{hsu-mixing-ext} for the reversible case with ours.
  For estimating $\pimin$,
  we obtain an improvement of a factor of $\tilde \bigO \left( \sqrt{d} \right)$,
  and for $\pssg$, we witness a similar improvement when
  $\tmix = \Omega \left( \sqrt{d} \max_{i \in [d]} \bpi(i) \right)$
  --- i.e. in the case where the chain is not rapidly mixing.
  When comparing to \citet[Theorem 4.2]{hsu-mixing-ext}, which intersects the point-estimate intervals
  with the empirical ones, our results are asymptotically equivalent. However, this is a somewhat misleading comparison,
  since the aforementioned intersection, being asymptotic in nature, is oblivious to the rate decay of the empirical intervals.
\end{remark}

\subsubsection{Computational complexity in the reversible setting}

The best current time complexity of multiplying (and inverting, and diagonalizing) $d\times d$ matrices is
$\bigO(d^\omega)$ where $2 \leq \omega \leq 2.3728639$ \citep{le2014powers}.

\paragraph{Time complexity.}

In the reversible case, the time complexity of our algorithm is $\bigO \left( m + d^2 + \mathcal{C}_{\lambda_\star} \right)$, where $\mathcal{C}_{\lambda_\star}$ is the complexity of computing the second largest-magnitude eigenvalue of a symmetric matrix.
For this task, we consider
here the Lanczos algorithm.
Let $\lambda_1, \ldots, \lambda_d$ be the eigenvalues of a symmetric real matrix ordered by magnitude,
and
denote by
$\lambda_1^{\textrm{{\tiny \textup{LANCZOS}}}}$
the algorithm's approximation for $\lambda_1$.
Then,
for a
stochastic
matrix, it is known \citep{kaniel1966estimates, paige1971computation, saad1980rates}
that
\begin{equation}
\begin{split}
\abs{\lambda_1 - \lambda_1^{\textrm{{\tiny \textup{LANCZOS}}}}} \leq C_\textrm{{\tiny \textup{LANCZOS}}} R^{-2(n-1)},
\end{split}
\end{equation}
where $C_\textrm{{\tiny \textup{LANCZOS}}}$ a universal constant,
$n$ is the number of iterations
(in practice often $n \ll d$),
$R = 1 + 2 r + 2 \sqrt{r^2 + r}$,
and $r = \frac{\lambda_1 - \lambda_2}{\lambda_2 - \lambda_d}$.
So in order to attain additive accuracy
$\eta$,
it suffices iterate the method
$n \geq 1 + \frac{1}{2} \frac{ \ln \left( C \eta^{-1} \right)}{\ln (R)} = \bigO \left( \frac{\ln \left( \eta^{-1} \right)}{\ln (R)} \right)$ times.

A single
iteration
involves multiplying a vector by a matrix,
incurring a cost of
$\bigO \left( d^2 \right)$,
and so the full complexity of the
Lanczos algorithm
is
$\bigO\left( m + d^2 \frac{\ln \left( \eta^{-1} \right)}{\ln (R)} \right)$.
More refined complexity analyses may be found in
\citet{kuczynski1992estimating, arora2005fast}

The previous
approach of
\citet{hsu-mixing-ext}
involved computing the Drazin inverse via matrix inversion,
incurring a cost of $\bigO \left( m + d^\omega \right)$.
Thus, our proposed computational method is faster over a non-trivial regime.

\paragraph{Space complexity.}
When estimating the absolute spectral gap of a reversible case, the space complexity remains $\bigO \left( d^2 \right)$, as the full empirical transition matrix is being constructed. 

\subsubsection{Computational complexity in the non-reversible setting}

\paragraph{Time complexity.}

For the non-reversible setting, we are estimating multiplicative reversiblizations of powers of chains
which involves matrix multiplication.
Our time complexity is
$\bigO \left( K^2 \left(m + d^\omega\right) \right)$.

\paragraph{Space complexity.}
In the non-reversible case,
$\bigO \left( K^2 \right)$ matrices are being constructed,
so that the overall space complexity is $\bigO \left( K^2 d^2 \right)$.

\paragraph{Reducing computation of a pseudo-spectral gap to the computation of spectral radii.}
\label{section:reduction-to-radii}
We now show that,
in order to compute
$\pssgK$
it suffices to
compute the value of $K$ spectral radii.
For a Markov chain $\mc$ with stationary distribution $\bpi$ and corresponding $\Lsym$,
we observe that
$\sg((\mc^k) \rev \mc^k) = \sg((\Lsym^k) \trn \Lsym^k)$
and compute,
\begin{equation}
\label{eq:LktLk}
\begin{split}
  \left[\left(\Lsym^k\right) \trn \Lsym^k\right]_{(i,j)} = \sum_{\ell = 1}^{d} \sqrt{\frac{\pi_\ell}{\pi_i}} \mc(\ell, i)^k \sqrt{\frac{\pi_\ell}{\pi_j}} \mc(\ell,j)^k,
\qquad k \in \mathbb{N}
  .
\end{split}
\end{equation}
Since $(\Lsym^k) \trn \Lsym^k$ is a symmetric positive semi-definite matrix,
its eigenvalues may be ordered
such that $\nu_1 \geq \nu_2 \geq \dots \nu_d \geq 0$.
From \eqref{eq:LktLk}
it follows
that $\sqrt{\bpi}$ is a left eigenvector for eigenvalue $1$,
and by the Perron-Frobenius theorem, $\rho((\Lsym^k) \trn \Lsym^k) = 1$.
By symmetry, we can express $(\Lsym^k) \trn \Lsym^k$ over an {orthogonal} left row eigen-basis
$v_1, \dots, v_d$ with the associated eigenvalues
$\nu_1, \dots, \nu_d$, where $(\nu_1, v_1) = (1, \sqrt{\bpi})$:
\begin{equation}
\begin{split}
\left(\Lsym^k\right) \trn \Lsym^k = \sum_{i = 1}^{d} \nu_i (v_i \trn v_i) \text{ hence } \rho\left( \left(\Lsym^k \right) \trn \Lsym^k - \sqrt{\bpi} \trn \sqrt{\bpi} \right) = \nu_2.
\end{split}
\end{equation}
We have thus shown that at the cost of computing $\sqrt{\bpi} \trn \sqrt{\bpi}$
does not depend on $k \in [K]$,
reducing the problem
to computing the largest eigenvalue of a real symmetric matrix.
The latter may be 
achieved via the {Lanczos method},
and is more efficient than
the {power iteration methods}
in the symmetric case.
In summary, we have shown that
\begin{equation}
\begin{split}
\pssgK(\mc) = \max_{k \in [K]} \set{ \frac{1 - \rho \left( \left(\Lsym^k \right) \trn \Lsym^k - \sqrt{\bpi} \trn \sqrt{\bpi} \right)}{k}} 
\end{split}
.
\end{equation}

\paragraph{Discussion.}

Recall also that for any $k \in \mathbb{N}$, $\frac{\sg^\dagger_k}{k}$
is a lower bound for the pseudo-spectral gap, hence an upper one for the mixing time,
so that any $k$ would yield a usable and conservative value for it. In practice this implies that the procedure makes iterative improvements to its known value of $\tmix$.

\section{Additional auxiliary results}

\subsection{Pseudo-spectral gap of reversible chains.}

An immediate consequence of \eqref{eq:control-tmix-reversible} and \eqref{eq:control-tmix-non-reversible} is that in the reversible case,
\begin{equation}
\frac{\asg}{2 \ln \left(4/ \pimin \right)} \leq \pssg \leq \frac{\asg}{1 - \asg} \left( \frac{\ln\left( 1 / \pimin \right) + 1}{\ln{2}} +2\right),
\end{equation}
i.e. $\pssg = \tilde \Theta (\asg)$.
Lemma~\ref{lemma:pssg-reversible}
below establishes the stronger fact that
for
reversible Markov chains, the {absolute spectral gap} and the {pseudo-spectral gap} are within a multiplicative
factor of $2$;  moreover, the {pseudo-spectral gap} has a closed form in terms of the {absolute spectral gap}:
\begin{lemma}
  \label{lemma:pssg-reversible}
  For reversible
  $\mc$,
we have
\begin{equation}
\asg(\mc) \leq \pssg(\mc) = \asg(\mc)[2 -  \asg(\mc)] \leq 2\asg(\mc).
\end{equation}

\begin{proof}
  Reversibility implies
  $\mc \rev = \mc$, and so $\pssg = \max_{k \geq 1} \set{ \frac{\sg(\mc^{2k})}{k}}$.
  Denoting by $1 = \lambda_1 > \lambda_2 \geq \dots \geq \lambda_d$ the eigenvalues of $\mc$, we have that for all $i \in [d]$ and $k \geq 1$, $\lambda_i^{2k}$ is an eigenvalue for $\mc^{2k}$, and furthermore $\lambda_\star^{2k}$ with $\lambda_\star = \max \set{\lambda_2(\mc), \abs{\lambda_d(\mc)}}$ is necessarily the second largest. We claim that
\begin{equation}
\pssg = \max_{k \geq 1} \left\{ \frac{1 - \lambda_\star^{2k}}{k} \right\}= 1 - \lambda_\star^{2}
\end{equation}
--- that is, the maximum is achieved at $k=1$. Indeed, $1 - \lambda_\star^{2k} =  (1 - \lambda_\star^2)\left( \sum_{i = 0}^{k-1} \lambda_\star^i \right)$ and the latter sum is at most $k$ since $\lambda_\star <1$. As a result, $\pssg(\mc) = 1 - \lambda_\star^2 = 1 - (1 - \asg(\mc))^2 = \asg(\mc)[2 -  \asg(\mc)]$ and also $\asg(\mc) \leq \pssg(\mc) \leq 2\asg(\mc)$.
\end{proof}
\end{lemma}

\subsection{Controlling the number of visits to all \texorpdfstring{$k$}{k}-skipped \texorpdfstring{$r$}{r}-offset chains.}

The following Lemma~\ref{lemma:control-visits-all-k-skipped-chains} is instrumental in proving Theorem~\ref{theorem:pseudo-sg-ub}.
It quantifies the trajectory length
sufficient to guarantee that
that all states of an ergodic Markov chain with skipping rate $k\in[K]$
have been visited approximately according to their expected value with high confidence.
Recall that $\kps$, defined immediately following \eqref{definition:pssg},
is the smallest positive integer
such that $\pssg = {\sg \left( \left(\mc \rev \right)^\kps \mc ^\kps\right)}/{\kps}$.

\begin{lemma}
\label{lemma:control-visits-all-k-skipped-chains}
For $(d, K) \in \mathbb{N}^2$,  let $X_1, \dots, X_m \sim (\mc, \bmu)$ be a $d$-state time homogeneous
ergodic Markov chain with pseudo-spectral gap $\pssg$ and stationary distribution $\bpi$ minorized by $\pimin$.
For $ 0 < \eta \leq 1$, consider the event  
$$\mathcal{K}^K_\eta = \set{
\max_{(k,i)\in[K]\times[d]}
  \abs{N_i^{(k)} - \E[\mc, \bpi]{N_i^{(k)}}} \leq \eta \E[\mc, \bpi]{N_i^{(k)}}},$$
where $N_i^{(k)}$ is defined in (\ref{equation:def-number-of-visits}).
Then, for $m \geq \frac{C_\mathcal{K} K^2}{\pimin \eta^2 \pssg} \ln\left( \frac{\sqrt{2 \pimin^{-1}} d K }{\delta} \right)$,
we have that
$\mathcal{K}^K_\eta$ holds with probability at least $1 - \delta$, where $C_\mathcal{K} \le 192$ is a universal constant.
\end{lemma}
\begin{proof}
By two applications of the union bound, followed by \citet[Proposition~3.10]{paulin2015concentration}, we have
\begin{equation}
\begin{split}
\PR[\mc, \bmu]{\compt{\mathcal{K}^K_\eta}} & \leq \sqrt{\nrm{\bmu/\bpi}_{2, \bpi}} \sum_{k = 1}^{K} \sum_{i = 1}^{d}  \mathbf{P}_{\mc, \bpi}^{1/2}\left[ \abs{N_i^{(k)} - \E[\mc, \bpi]{N_i^{(k)}}} > \E[\mc, \bpi]{N_i^{(k)}} \right].
\end{split}
\end{equation}
This accounts for non-stationary starting distributions,
and it remains to upper bound each of the summands.
Fixing
$k$ and $i$,
define
$\phi_t(X_t) = \pred{
  \mydiv{k}{t}
  \text{ and } X_t = i}$,
$t \in [m]$.
Putting
$N_i^{(k)} = \sum_{t = 1}^{m} \phi_t(X_t)$,
we obtain 
$\E[\mc, \bpi]{\phi_t} = \pred{
  \mydiv{k}{t}
} \pi_i$
from the ergodic theorem for stationary chains.
Thus, for $j\in[d]$,
we have
$\abs{\phi_t(j) - \E[\mc, \bpi]{\phi_t}} \leq 1$.
We cannot directly apply \citet[Theorem~3.4]{paulin2015concentration} for reasons that we detail in
Remark~\ref{remark:known-concentration-bound-not-applicable},
and instead
derive
in Theorem~\ref{theorem:alternative-concentration-pssg}
(using his methods)
a concentration bound tailored
to our needs.

Let $\tilde{\sigma}_r$ defined as in Theorem~\ref{theorem:alternative-concentration-pssg}. For any $t$,
we compute the closed form for the variance of $\phi_t$:
\begin{equation}
\begin{split}
  \Var[\mc, \bpi]{\phi_t} = \E[\mc, \bpi]{\phi_t} \left( 1 - \E[\mc, \bpi]{\phi_t}\right) = \pred{
    \mydiv{k}{t}
  } \pi_i ( 1 - \pi_i).
\end{split}
\end{equation}
Thus, for
$1 \leq r \leq \kps$,
we have
\begin{equation}
\begin{split}
  \tilde{\sigma}_r &= \pi_i (1 - \pi_i) \abs{ \set{ 0 \leq s \leq \floor{(m - r) / \kps} :
      \mydiv{k}{r + s \kps}
  }} \leq \frac{\pi_i}{4 \kps} \left( m + \kps \right).
\end{split}
\end{equation}
Invoking Theorem~\ref{theorem:alternative-concentration-pssg},
\begin{equation}
\label{equation:control-visits-to-single-state}
\begin{split}
 \PR[\mc, \bpi]{\abs{N_i^{(k)} - \E[\mc, \bpi]{N_i^{(k)}}} > \eta \E[\mc, \bpi]{N_i^{(k)}}} &\leq 2 \expo{-\frac{ \floor{m/k}^2 \pi_i \eta^2  \pssg}{2 \left( m + \kps \right) + 20 \floor{m/k} \eta }} \\
 &\leq 2\expo{-\frac{ m \pi_i \eta^2  \pssg }{96 k^2}},
\end{split}
\end{equation}
where the second inequality relies on the fact that $m \geq \max \set{2K, \frac{1}{\pssg}}$,
and $\pssg = \frac{\sg^{\dagger}_\kps}{\kps} \leq \kps^{-1}$.
It follows that
\begin{equation}
\begin{split}
\PR[\mc, \bmu]{\compt{\mathcal{K}^K_\eta}} & \leq \sqrt{2 \pimin^{-1}} \sum_{k = 1}^{K} \sum_{i = 1}^{d} \expo{-\frac{ m \pi_i \eta^2  \pssg }{96 k^2}} \leq \sqrt{2 \pimin^{-1}} K d \expo{-\frac{ m \pimin \eta^2  \pssg }{192 K^2}}. \\
\end{split}
\end{equation}
\end{proof}

\begin{theorem}[Bernstein-type bound for non-reversible Markov chain.]
\label{theorem:alternative-concentration-pssg}
For $d \in \mathbb{N}$, let $X_1, \dots, X_m \sim (\mc, \bpi)$ be a $d$-state stationary time homogeneous ergodic
Markov chain with pseudo-spectral gap $\pssg$.
Suppose that
$\phi_1 ,\dots \phi_m:[d]\to\R$
are
such that
for some $C>0$
and all
$j\in[d],t\in[m]$,
we have
$\abs{\phi_t(j) - \E[\mc, \bpi]{\phi_t}} \leq C$.
Define, for $1 \leq r \leq \kps$, the quantities
$$\tilde{\sigma}_r \eqdef \sum_{s = 0}^{\floor{(m - r) / \kps}} \Var[\mc, \bpi]{\phi_{r + s  \kps}}$$
and
$$\tilde{\sigma} \eqdef \kps \max_{1 \leq r \leq \kps} \tilde{\sigma}_r,$$ where $\kps$
is
the smallest positive integer such that $\pssg = \frac{\sg^{\dagger}_\kps}{\kps}$. Then
\begin{equation}
\begin{split}
\PR[\mc, \bpi]{\abs{\sum_{t = t}^{m} \phi_t(X_t) - \E[\mc, \bpi]{\sum_{t = t}^{m} \phi_t(X_t)}} > \eps} \leq 2 \expo{-\frac{\eps^2 \pssg}{8 \tilde{\sigma} + 20 \eps C}}.
\end{split}
\end{equation}
\end{theorem}
\begin{proof}
  Our proof
proceeds along
the
lines
of \citet[Theorem~3.4]{paulin2015concentration}.
We begin by partitioning the sequence
$\left(\phi_1(X_1), \dots, \phi_m(X_m)\right)$ into $\kps$
skipped sub-sequences indexed by $1 \leq r \leq \kps$,
\begin{equation}
\begin{split}
\left(\phi(\X_{r + s \kps})\right)_{0 \leq s \leq \floor{(m - r)/\kps}},
\end{split}
\end{equation}
with
\begin{equation}
\begin{split}
\paren{\X_{r + s \kps}}_{0 \leq s \leq \floor{(m - r)/\kps}} \sim (\mc^\kps, \bpi \mc^{r-1})
\end{split}
\end{equation}
from Jensen's inequality, for any
distribution
$\boldsymbol{\alpha} = (\alpha_1, \dots, \alpha_\kps) \in \Delta_{\kps}$ and $\theta > 0$,
\begin{equation}
\begin{split}
\E[\mc, \bpi]{\expo{\theta \sum_{t = 1}^{m} \phi_t(X_t)}} \leq \sum_{r = 1}^{\kps} \alpha_i \E[\mc, \bpi]{\expo{\frac{\theta}{\alpha_i} \sum_{s = 0}^{\floor{(m - r)/\kps}} \phi_{r + s \kps}(X_{r + s \kps})}}.
\end{split}
\end{equation}
In particular for $i \in [\kps]$
and $\alpha_i = \frac{1}{\kps}$,
it follows from
\citet[(5.6), (5.7)]{paulin2015concentration} that
\begin{equation}
\begin{split}
\E[\mc, \bpi]{\expo{\theta \sum_{t = t}^{m} \phi_t(X_t)}} &\leq  \frac{1}{\kps} \sum_{r = 1}^{\kps} \expo{\frac{2 \theta^2 \kps^2 \tilde{\sigma}_r}{\sg^{\dagger}_\kps} \left( 1 - \frac{10 \theta \kps}{\sg^{\dagger}_\kps} \right)^{-1}} \\
&\leq \expo{\frac{2 \theta^2 \kps \max_{1 \leq r \leq \kps}\tilde{\sigma}_r}{\pssg} \left( 1 - \frac{10 \theta }{\pssg} \right)^{-1}}.
\end{split}
\end{equation}
From
the estimate
on its moment generating function, the random variable
$\sum_{t = t}^{m} \phi_t(X_t)$ is {sub-gamma} with {variance factor}
$\nu \eqset \frac{4 \kps \max_{1 \leq r \leq \kps}\tilde{\sigma}_r}{\pssg}$ and {scale parameter} $B \eqset \frac{10}{\pssg}$, i.e. 
$$\E[\mc, \bpi]{\expo{\theta \sum_{t = t}^{m} \phi_t(X_t)}} \leq \expo{\frac{\nu \theta^2}{2(1 - B \theta)}}.$$
An application of Markov's inequality concludes the proof.
\end{proof}

\begin{remark}
  \label{remark:known-concentration-bound-not-applicable}
  Curiously,
  \citet[Theorem~3.4]{paulin2015concentration},
  in its most general form,
does not seem to yield useful bounds for our particular problem.
Indeed, one is faced with bounding the ratio
$\frac{R(m,k,\kps)}{\kps}$ where
$R(m,k,\kps) \eqdef \cfrac{\sum_{1 \leq r \leq \kps} \sqrt{\tilde{\sigma}_r}}{\min_{1 \leq r \leq \kps} \sqrt{\tilde{\sigma}_r}}$. However, for our considered $\phi_1, \dots, \phi_m$,
the latter quantity is computed to be
$$R(m,k,\kps) = \frac{\sum_{1 \leq r \leq \kps} \sqrt{\abs{ \set{ 0 \leq s \leq \floor{(m - r) / \kps} :
        \mydiv{k}{r + s \kps}
}}}}{\min_{1 \leq r \leq \kps} \sqrt{\abs{ \set{ 0 \leq s \leq \floor{(m - r) / \kps} :
        \mydiv{k}{r + s \kps}
}}}},$$
and in particular, the the denominator
might be zero.
(For a concrete example, take $k = \kps = 5$, and $r = 1$.)
We hope that our observation is a step towards
answering \citet[Remark~3.7]{paulin2015concentration}.
\end{remark}

\subsection{Empirical bounds for learning the stationary distribution of a chain.}
\label{sec:empirical-bounds}

The technical results of this section are at the core of three different improvements. First, to extend the perturbation results of \citet{hsu2015mixing} to the non-reversible setting, second, to improve the width of their confidence intervals for $\pimin$ roughly by a factor $\tilde \bigO(\sqrt{d})$ in the reversible case through the use of a vector-valued martingale technique, and third, to reduce the computation cost of their intervals from $\bigO(d^3)$ to $\bigO(d^2)$, by showing that we can trade the computation of a pseudo-inverse with the already computed estimator for the absolute spectral gap.

\begin{lemma}[Perturbation bound for stationary distribution of ergodic Markov chain]
\label{lemma:non-reversible-kappa-control}
Let $\mc_1$ (resp. $\mc_2$)
be 
an ergodic Markov chain with stationary distribution
$\bpi_1$ (resp. $\bpi_2$) minorized by $\pimin(\mc_1)$ (resp. $\pimin(\mc_2)$)
and pseudo-spectral gap $\pssg(\mc_1)$ (resp. $\pssg(\mc_2)$).
Then
\begin{equation}
\begin{split}
\nrm{\bpi_1 - \bpi_2}_\infty \leq \frac{C_\mathcal{K}}{\max \set{ \pssg(\mc_1), \pssg(\mc_2) }} \ln \left( 2 \sqrt{\frac{2}{\max \set{ \pimin(\mc_1), \pimin(\mc_2) }}} \right) \nrm{\mc_1 - \mc_2}_\infty,
\end{split}
\end{equation}
where $C_\mathcal{K}$ is the universal constant in Lemma~\ref{lemma:control-visits-all-k-skipped-chains}.
\end{lemma}
\begin{proof}
The proof follows a similar argument as to the one at \citep[Lemma~8.9]{hsu-mixing-ext} except that \eqref{equation:control-visits-to-single-state}, valid for non-reversible Markov chains, is used instead of \citet[Chapter~12]{levin2009markov}.
\citet[Section 3.3]{cho2001comparison}
have shown established that
$\nrm{\bpi_1 - \bpi_2}_\infty \leq \kappa_1 \nrm{\mc_1 - \mc_2}_\infty$,
where
$\kappa_1 = \frac{1}{2} \max_{j \in [d]} \set{ \pi_j  \max_{i \neq j} \E[\mc_1, \delta_i]{\tau_j}}$, $\boldsymbol{\delta}_i$
is
the distribution
whose support consists of
state $i$,
and $\tau_j$
is
the first time state $j$ is visited by the Markov chain.
We proceed by with upper bounding $\E[\mc_1, \boldsymbol{\delta}_i]{\tau_j}$.
Setting
$\eta \eqset k \eqset 1$
in
\eqref{equation:control-visits-to-single-state},
we have, for $(m,i,j) \in \mathbb{N} \times [d]^2$, that
\begin{equation}
\begin{split}
\PR[\mc_1, \boldsymbol{\delta}_i]{\tau_j > m} &= \PR[\mc_1, \boldsymbol{\delta}_i]{N_j = 0} \leq \PR[\mc_1, \boldsymbol{\delta}_i]{\abs{N_j - \pi_j m} > \pi_j m}.
\end{split}
\end{equation}
Further,
since
$\nrm{\boldsymbol{\delta}_i/\bpi_1}_{2, \bpi_1} = \frac{1}{\bpi_1(i)}$
for $m \geq \frac{C_\mathcal{K}}{\bpi_1(j) \pssg(\mc_1)} \ln \left( 2 \sqrt{\frac{2}{\bpi_1(i)}} \right)$, it
follows
that $\PR[\mc_1, \boldsymbol{\delta}_i]{\tau_j > m} \leq \frac{1}{2}$.
Hence,
\begin{equation}
\begin{split}
\E[\mc_1, \boldsymbol{\delta}_i]{\tau_j} &= \sum_{\ell \geq 0} \PR[\mc_1, \boldsymbol{\delta}_i]{\tau_j > \ell} = \sum_{0 \leq \ell < m} \PR[\mc_1, \boldsymbol{\delta}_i]{\tau_j > \ell} + \sum_{k \geq 1}\sum_{k m \leq \ell < (k+1) m} \PR[\mc_1, \boldsymbol{\delta}_i]{\tau_j > \ell} \\
&\leq m + \sum_{k \geq 1}\sum_{k m \leq \ell < (k+1) m} \PR[\mc_1, \boldsymbol{\delta}_i]{\tau_j > k m}.
\end{split}
\end{equation}
By the Markov property,
$\PR[\mc_1, \boldsymbol{\delta}_i]{\tau_j > m} \leq 1/2
\implies
\PR[\mc_1, \boldsymbol{\delta}_i]{\tau_j > k m} \leq 2^{-k}$,
$k\ge1$,
and so
\begin{equation}
\begin{split}
\E[\mc_1, \boldsymbol{\delta}_i]{\tau_j} &\leq m + m \sum_{k \geq 1} 2^{-k} = m + m \frac{2^{-1} - \lim_{k \rightarrow \infty}2^{-k}}{1 - 1/2} = 2m,
\end{split}
\end{equation}
which completes the proof.
\end{proof}

The following corollary is a natural extension of a similar result of \citet{hsu2015mixing} to matrix-valued martingales.

\begin{corollary}[to Theorem~\ref{theorem:matrix-freedman}]
\label{corollary:freedman-matrix-compacted}
Consider a matrix martingale difference sequence $\set{\Y_t : t = 1, 2, 3, \dots}$ with
dimensions $d_1 \times d_2$, such that $\nrm{\Y_t}_2 \leq 1$ almost surely for $t = 1,2, \dots$. Then with $\nrm{\mathbf{\Sigma}_m}_2$ as defined in Theorem~\ref{theorem:matrix-freedman},
for all $\eps \geq 0$ and $\sigma^2 > 0$,
\begin{equation}
\begin{split}
\PR{\nrm{\sum_{t=1}^{m}\Y_t}_2 > \sqrt{2 \nrm{\mathbf{\Sigma}_m}_2 \tau_{\delta, m}} + \frac{5}{3} \tau_{\delta, m} } &\leq \delta, \\ 
\text{ where } \tau_{\delta, m} \eqdef \inf \set{t > 0: \left(1 + \ceil{\ln(2m/t)}_+\right) (d_1 + d_2)  e^{-t} \leq \delta} &= \bigO \left( \ln\left( \frac{(d_1 + d_2) \ln m}{\delta} \right) \right).\\
\end{split}
\end{equation}
\end{corollary}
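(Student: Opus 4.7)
The plan is to derive this corollary from Theorem~\ref{theorem:matrix-freedman} by a standard \emph{peeling} (stratification) argument on the predictable quadratic variation $V := \nrm{\mathbf{\Sigma}_m}_2$, together with the fixed-point property defining $\loglog{\delta}{m}$. The almost-sure bound $\nrm{\Y_t}_2 \leq 1$ yields the deterministic envelope $V \leq m$, which delimits the peeling range.

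Set $L := \loglog{\delta}{m}$ and $K := \ceil{\ln(2m/L)}_+$, and introduce the dyadic partition $B_0 := [0, L]$, $B_j := (L \cdot 2^{j-1}, L \cdot 2^j]$ for $j = 1, \ldots, K$. Then $B_0 \cup \cdots \cup B_K \supseteq [0, m]$ covers every possible value of $V$, and on each $B_j$ the candidate threshold $\sqrt{2 V L} + (5/3) L$ is minorized by its value at the left endpoint of the band; call this band-dependent floor $\eps_j$. The event of interest decomposes as
$$\set{\nrm{\textstyle\sum_t \Y_t}_2 > \sqrt{2 V L} + (5/3)L,\; V \in B_j} \subseteq \set{\nrm{\textstyle\sum_t \Y_t}_2 > \eps_j,\; V \leq L \cdot 2^j},$$
and the right-hand event is directly controlled by Theorem~\ref{theorem:matrix-freedman} with $\sigma^2 = L \cdot 2^j$ and $\eps = \eps_j$. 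A careful algebraic check --- the key point being that the $(5/3)L$ term provides an extra additive $L$ over the $(2/3)L$ term appearing in the standard Bernstein linearization $\eps \geq (2/3)L + \sqrt{2 L \sigma^2}$, and this slack absorbs the $\sqrt 2$ gap between $\sqrt{2 \cdot L\,2^{j-1}\cdot L}$ (the threshold floor on $B_j$) and $\sqrt{2 \cdot L\,2^j \cdot L}$ (what Freedman requires for $\sigma^2 = L\,2^j$) --- shows the per-band failure probability is at most $(d_1 + d_2) e^{-L}$.

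A union bound over the $K+1$ bands then yields
$$\PR{\nrm{\textstyle\sum_t \Y_t}_2 > \sqrt{2 V L} + (5/3) L} \leq (K+1)(d_1 + d_2) e^{-L} \leq \delta,$$
where the last inequality is exactly the defining property of $L = \loglog{\delta}{m}$. The asymptotic estimate $\loglog{\delta}{m} = \bigO(\ln((d_1 + d_2) \ln m / \delta))$ then follows by substituting the ansatz $t = c \ln((d_1 + d_2) \ln m / \delta)$ into the defining implicit inequality and verifying that it holds for some absolute constant $c$.

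The main obstacle is the band-by-band algebraic bookkeeping alluded to above: the bound $\sqrt{2 V L} + (5/3) L$ is tight enough that the margin over the Freedman-required threshold $\sqrt{2 L \sigma^2} + (2/3) L$ must be tracked exactly, and the dyadic spacing has to be compatible with that margin uniformly across the $K$ bands. The edge case $j = 0$ (where the $\sqrt{2VL}$ contribution may vanish) also needs separate verification, since there the entire burden falls on the $(5/3)L$ additive term. Everything else --- the envelope $V \leq m$, the union bound, and the implicit definition of $\loglog{\delta}{m}$ --- is routine.
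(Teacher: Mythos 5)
Your strategy coincides with the paper's: peel $V=\nrm{\mathbf{\Sigma}_m}_2$ into geometrically growing bands using the envelope $V\le m$ (obtained from $\nrm{\Y_t}_2\le1$), apply Theorem~\ref{theorem:matrix-freedman} once per band with $\sigma^2$ equal to the band's right endpoint, take a union bound, and close with the fixed-point definition of $\loglog{\delta}{m}$. The differences are in the band parameters, and they matter. The paper peels at ratio $e$ starting from $\sigma_0^2=\eps/2$, so the number of bands is exactly the $1+\ceil{\ln(2m/\eps)}_+$ that the definition of $\loglog{\delta}{m}$ pays for, and the base band works out exactly: with $\sigma^2=\eps/2$ Freedman asks for $\sqrt{2(\eps/2)\eps}+\tfrac23\eps=\tfrac53\eps$, which is where the additive constant comes from. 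Your dyadic bands with $K=\ceil{\ln(2m/L)}_+$ do not cover $[0,m]$: the top band reaches only $L2^K\le 2L(2m/L)^{\ln 2}$, which is below $m$ once $m/L$ exceeds roughly $46$; and if you instead take enough dyadic bands to cover $[0,m]$ you need about $\log_2(m/L)\approx 1.44\ln(m/L)$ of them, more than the count built into $\loglog{\delta}{m}$, so the final union bound no longer closes. Your base band $B_0=[0,L]$ also fails on its own: at $V=0$ the floor is $\tfrac53 L$, while Freedman with $\sigma^2=L$ requires about $1.79\,L$ (and $\sqrt2 L+\tfrac23 L$ via the usual relaxation); the additive $\tfrac53$ is calibrated to a base band of width $\eps/2$, not $\eps$.

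The more serious problem is your ``key algebraic point.'' On $B_j$ the floor is $2^{j/2}L+\tfrac53 L$ while Freedman with $\sigma^2=L2^j$ demands $\sqrt2\,2^{j/2}L+\tfrac23 L$; the required containment amounts to $(\sqrt2-1)2^{j/2}\le 1$, which holds only for $j\le 2$. An additive slack of size $L$ cannot absorb a multiplicative $\sqrt2$ inflation whose magnitude grows like $2^{j/2}L$, so the per-band inclusion fails on all higher bands, and this is not rescued by using the exact (unrelaxed) Freedman threshold. You should also be aware that the paper's own write-up glosses over the same tension: its step ``observing that $\sqrt{2\max\set{\sigma_0^2, e\nrm{\mathbf{\Sigma}_k}_2}\eps}\le\sqrt{2\nrm{\mathbf{\Sigma}_k}_2\eps}+\eps$'' is valid only when $\nrm{\mathbf{\Sigma}_k}_2\le\eps/(2(\sqrt e-1)^2)\approx 1.19\,\eps$, i.e.\ on the first band or two. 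What ratio-$r$ peeling honestly yields is a bound of the form $\sqrt{2r\nrm{\mathbf{\Sigma}_m}_2\loglog{\delta}{m}}+\tfrac53\loglog{\delta}{m}$; getting the leading constant down to $1$ requires either accepting the $\sqrt r$ factor in the statement or peeling at ratio $1+o(1)$ with a correspondingly larger band count (hence a larger $\loglog{\delta}{m}$). So the step you defer to ``a careful algebraic check'' is precisely where the corollary's constant is decided, and as proposed it does not go through.
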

\begin{proof}
  Let $m \in \mathbb{N}$. From properties of the spectral norm (sub-additivity, sub-multiplicativity, invariance under transpose),
  an application of Jensen's inequality,
  and $\nrm{\Y_t}_2 \leq 1$, we have
\begin{equation}
\begin{split}
\nrm{\W_{col, k}}_2 = \nrm{\sum_{t=1}^{k} \E[t-1]{ \Y_t \Y_t \trn}}_2 \leq \sum_{t=1}^{k} \mathbf{E}_{t-1}{ \nrm{  \Y_t \Y_t \trn}_2} = \sum_{t=1}^{k}  \mathbf{E}_{t-1}{\nrm{  \Y_t }^2_2} \leq k,
\end{split}
\end{equation}
and similarly $\nrm{\W_{\row, k}}_2 \leq k$,
concluding that
$\nrm{\mathbf{\Sigma}_k}_2 \leq k$.
Let $\eps > 0$ and $\sigma >0$.
Define
$\sigma_i^2 \eqdef \frac{e^i \eps}{2}$
for $i \in \set{0, 1, \dots, \ceil{\ln\left(2m/\eps\right)}_+}$,
and $\sigma_{-1}^2 \eqdef -\infty$.
Observing that
$\sqrt{2 \max \set{ \sigma_0^2, e \nrm{\mathbf{\Sigma}_k}_2 } \eps} \leq \sqrt{2 \nrm{\mathbf{\Sigma}_k}_2 \eps} + \eps$,
we have
\begin{equation}
\begin{split}
&\PR{\exists k \in [m], \nrm{\sum_{t=1}^{k}\Y_t}_2 > \sqrt{2 \nrm{\mathbf{\Sigma}_k}_2 \eps} + \frac{5}{3}\eps } \\
&\leq \sum_{i=0}^{\ceil{\ln(2m/\eps)}_+} \PR{\exists k \in [m], \nrm{\sum_{t=1}^{k}\Y_t}_2 > \sqrt{2 \max \set{ \sigma_0^2, e \nrm{\mathbf{\Sigma}_k}_2  }\eps} + \frac{2}{3}\eps \text{ and } \nrm{\mathbf{\Sigma}_k}_2 \in (\sigma_{i-1}^2, \sigma_{i}^2] } \\
&\leq \sum_{i=0}^{\ceil{\ln(2m/\eps)}_+} \PR{\exists k \in [m], \nrm{\sum_{t=1}^{k}\Y_t}_2 > \sqrt{2 \max \set{ \sigma_0^2, e \sigma_{i-1}^2 } \eps} + \frac{2}{3}\eps \text{ and } \nrm{\mathbf{\Sigma}_k}_2 \in (\sigma_{i-1}^2, \sigma_{i}^2] } \\
&\leq \sum_{i=0}^{\ceil{\ln(2m/\eps)}_+} \PR{\exists k \in [m], \nrm{\sum_{t=1}^{k}\Y_t}_2 > \sqrt{2 \sigma_{i}^2 \eps} + \frac{2}{3}\eps \text{ and } \nrm{\mathbf{\Sigma}_k}_2 \leq \sigma_{i}^2 }.
\end{split}
\end{equation}
Applying Theorem~\ref{theorem:matrix-freedman}
deviation size $\sqrt{2 \sigma_i^2 \eps} + \frac{2}{3}\eps$ yields
\begin{equation}
\begin{split}
\PR{\exists k \in [m], \nrm{\sum_{t=1}^{k}\Y_t}_2 > \sqrt{2 \nrm{\mathbf{\Sigma}_k}_2 \eps} + \frac{5}{3}\eps } &\leq \left(1 + \ceil{\ln(2m/\eps)}_+\right) (d_1 + d_2)  e^{-\eps},
\end{split}
\end{equation}
which concludes the proof.
\end{proof}

The following lemma gives an empirical high-confidence
bound for the problem of learning an unknown Markov chain $\mc$ with respect to the
$\nrm{\cdot}_\infty$ norm.

\begin{lemma}
\label{lemma:mc-empirical-learning-infinity-norm}
Let $X_1, \dots, X_m \sim (\mc, \bmu)$ a $d$-state Markov chain and $\estmc$ defined as in Section~\ref{section:pseudo-sg-ub}.
Then, with probability at least
$1 - \delta$, 
\begin{equation}
\begin{split}
\nrm{\estmc - \mc}_\infty \leq 4 \tau_{\delta/d, m} \sqrt{\frac{d}{\Nmin}},
\end{split}
\end{equation}
where $\tau_{\delta, m} = \inf\set{t > 0 : \left(1 + \ceil{\ln(2m/t)}_+\right) (d + 1)  e^{-t} \leq \delta}$, and $\Nmin$ is defined in (\ref{equation:def-random-variable-convenient-notation}).
\end{lemma}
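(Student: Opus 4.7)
The plan is to bound each row $\estmc(i,\cdot) - \mc(i,\cdot)$ in $\ell_1$-norm via the rectangular matrix Freedman inequality (Corollary~\ref{corollary:freedman-matrix-compacted}), then union bound over $i \in [d]$. For fixed $i$, I would introduce the row-vector martingale difference sequence $Y_t^{(i)} \eqdef \pred{X_t = i}(\e_{X_{t+1}} - \mc(i, \cdot))$ for $t = 1, \dots, m-1$, adapted to the natural filtration $\filt_t = \sigma(X_1, \dots, X_t)$. The Markov property gives $\E{Y_t^{(i)} \mid \filt_t} = 0$, and by construction $\sum_{t=1}^{m-1} Y_t^{(i)} = N_i \cdot (\estmc(i, \cdot) - \mc(i, \cdot))$, so the task reduces to controlling the spectral norm of this partial sum.

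Next I would compute the two inputs required by Corollary~\ref{corollary:freedman-matrix-compacted}: a uniform bound on $\nrm{Y_t^{(i)}}_2$ and on the predictable quadratic variation $\nrm{\mathbf{\Sigma}_{m-1}}_2$. A direct expansion yields $\nrm{Y_t^{(i)}}_2 \leq \sqrt{2}$. For the variance, the conditional covariance $\E{{Y_t^{(i)}}\trn Y_t^{(i)} \mid \filt_t}$ equals $\pred{X_t = i}(\diag(\mc(i, \cdot)) - \mc(i, \cdot)\trn \mc(i, \cdot))$, a PSD matrix whose trace $1 - \nrm{\mc(i, \cdot)}_2^2 \leq 1$ also upper-bounds its spectral norm; the same bound of $1$ holds for $\E{Y_t^{(i)} {Y_t^{(i)}}\trn \mid \filt_t}$. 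Thus after rescaling to $Y_t^{(i)}/\sqrt{2}$ (to meet the unit-norm hypothesis of the corollary), the summed variance envelope is at most $N_i / 2$.

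Applying Corollary~\ref{corollary:freedman-matrix-compacted} with $(d_1, d_2) = (1, d)$ at confidence $\delta/d$ and a union bound over $i \in [d]$, with probability at least $1 - \delta$,
\begin{equation*}
\nrm{\sum_{t=1}^{m-1} Y_t^{(i)}}_2 \leq \sqrt{2 N_i \loglog{\delta/d}{m}} + \tfrac{5\sqrt{2}}{3} \loglog{\delta/d}{m}, \qquad \forall i \in [d].
\end{equation*}
Dividing by $N_i$, converting $\nrm{\cdot}_1 \leq \sqrt{d}\,\nrm{\cdot}_2$ on each row, and taking the maximum over $i$, this reduces to $\sqrt{d}\bigl(\sqrt{2 \loglog{\delta/d}{m}/\Nmin} + \tfrac{5\sqrt{2}}{3} \loglog{\delta/d}{m}/\Nmin\bigr)$. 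In the informative regime $\loglog{\delta/d}{m} \leq \Nmin$, the two terms combine to $(8\sqrt{2}/3)\sqrt{d\,\loglog{\delta/d}{m}/\Nmin}$, and a final use of $\sqrt{\loglog{\delta/d}{m}} \leq \loglog{\delta/d}{m}$ (since $\loglog{\cdot}{\cdot} \geq 1$) yields the claimed $4 \loglog{\delta/d}{m} \sqrt{d/\Nmin}$.

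The main obstacle I expect is the bookkeeping of constants: pushing the leading multiplier down to $4$ forces the tighter variance bound $\nrm{\diag(p) - p\trn p}_2 \leq 1 - \nrm{p}_2^2$ rather than the lazy $\nrm{Y_t^{(i)}}_2^2 \leq 2$, and the application of the corollary must absorb the randomness of $N_i$ into the variance envelope --- this is handled precisely by the uniform-in-time form of Corollary~\ref{corollary:freedman-matrix-compacted}, which lets us plug in the realized value of $N_i$ at time $m-1$ without paying an additional union-bound cost over its range.
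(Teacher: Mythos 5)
Your proposal is correct and follows essentially the same route as the paper: a row-wise vector-valued martingale difference, the uniform matrix-Freedman corollary (Corollary~\ref{corollary:freedman-matrix-compacted}) with the random variance envelope bounded by $N_i$ (up to the $1/\sqrt{2}$ rescaling), an $\ell_1$-to-$\ell_2$ conversion costing $\sqrt{d}$, and a union bound over the $d$ rows, with the same constant bookkeeping to reach the factor $4$ (the paper cites \citet{wolfer2018minimax} for the variance computation you carry out explicitly). The only cosmetic difference is your restriction to the regime $\loglog{\delta/d}{m} \leq \Nmin$, which is harmless since outside it the stated bound exceeds $2 \geq \nrm{\estmc - \mc}_\infty$ and is vacuous, just as the paper implicitly uses $\Nmin \geq 1$.
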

\begin{proof}
For a fixed $i$,
define the row vector sequence $\Y$ by
\begin{equation}
\begin{split}
\Y_{0} = 0, \Y_{t} = \frac{1}{\sqrt{2}} \left[\pred{X_{t-1} = i} (\pred{X_t = j} - \mc(i,j)) \right]_{j}.
\end{split}
\end{equation}
Notice that $\sum_{t=1}^{m} \Y_t = \left[ N_{i j} - N_i \mc(i, j)\right]_j$, and from the Markov property $\E[t-1]{\Y_{t}} = \zero$, so that $\Y_t$ defines a vector valued martingale difference. Let $\W_{\col, m}, \W_{\row, m}$ and $\nrm{\mathbf{\Sigma}_m}_2$ be as defined in Theorem~\ref{theorem:matrix-freedman}. Then
\begin{equation}
\begin{split}
\Y_t \Y_t \trn &= \nrm{\Y_t}_2^2 = \frac{1}{2} \pred{X_{t-1} = i} \left( 1  + \nrm{\mc(i,\cdot)}_2^2 - 2 \mc(i,X_t) \right) \leq \pred{X_{t-1} = i} \\
\left[\Y_t \trn \Y_t\right]_{(j, \ell)} &= \frac{\pred{X_{t-1} = i}}{2} \left[ \pred{X_t = j} - \mc(i,j) \pred{X_t = \ell} - \mc(i, \ell) \right]_{(j, \ell)},
\end{split}
\end{equation}
so that $\W_{\col, m} \leq N_i$.
Further,
$\nrm{\Y_t \trn \Y_t}_2 \leq \sqrt{\nrm{\Y_t \trn \Y_t}_1  \nrm{\Y_t \trn \Y_t}_\infty}$
(H\"older's inequality),
$\nrm{\Y_t \trn \Y_t}_2 \leq \pred{X_{t-1} = i}$,
and
$\nrm{\mathbf{\Sigma}_m}_2 \leq N_i$
(sub-additivity of the norm and Jensen's inequality).

Corollary~\ref{corollary:freedman-matrix-compacted}
and another application of H\"older's inequality, yields, for all $i \in [d]$,
\begin{equation}
\begin{split}
&\PR[\mc, \bmu]{\nrm{\estmc(i, \cdot) - \mc(i, \cdot)}_1 > \sqrt{\frac{2 d \tau_{\delta/d, m}}{N_i}} + \frac{5 \tau_{\delta/d, m} \sqrt{d}}{3 N_i}  } \\
&\leq \PR[\mc, \bmu]{\nrm{\estmc(i, \cdot) - \mc(i, \cdot)}_1 > \left( \sqrt{2 \nrm{\mathbf{\Sigma}_m}_2 \tau_{\delta/d, m}} + \frac{5}{3} \tau_{\delta/d, m} \right) \frac{\sqrt{d}}{N_i} } \\
&\leq \PR[\mc, \bmu]{\nrm{\sum_{t=1}^{m} \Y_t}_2 >  \sqrt{2 \nrm{\mathbf{\Sigma}_m}_2 \tau_{\delta/d, m}} + \frac{5}{3} \tau_{\delta/d, m} } \leq \frac{\delta}{d}.
\end{split}
\end{equation}
Finally, the observation that
\begin{equation}
\sqrt{\frac{2 d \tau_{\delta/d, m}}{\Nmin}} + \frac{5 \tau_{\delta/d, m} \sqrt{d}}{3 \Nmin} \leq 4 \tau_{\delta/d, m} \sqrt{\frac{d}{\Nmin}}
\end{equation}
completes the proof.
\end{proof}

\begin{remark}
  In the discrete distribution learning model
  \citet{DBLP:conf/stoc/KearnsMRRSS94, DBLP:conf/innovations/Waggoner15},
  the minimax complexity for learning a $[d]$-supported distribution up to precision
  $\eps$ with high confidence is of order
  $m = \tilde{\Theta} \left( \frac{d}{\eps^2} \right)$,
  and so the bound
  in Lemma~\ref{lemma:mc-empirical-learning-infinity-norm} is in a sense optimal,
  and can be thought of as an empirical version of the bounds derived in \citet{wolfer2018minimax}.
\end{remark}

\begin{corollary}[to Lemma~\ref{lemma:mc-empirical-learning-infinity-norm}]
\label{corollary:learning-time-reversal-empirical-bounds}
Let $X_1, \dots, X_m \sim (\mc, \bmu)$ a $d$-state Markov chain and $\estmc \rev$ is such that $\estmc \rev (i,j) \eqdef \frac{N_{ j i }}{N_i}$. Then, with probability at least
$1 - \delta$, 
\begin{equation}
\begin{split}
\nrm{\estmc \rev - \mc \rev}_\infty \leq 4 \tau_{\delta/d, m} \sqrt{\frac{d}{\Nmin}},
\end{split}
\end{equation}
where $\mc \rev$ is the time reversal of $\mc$, $\tau_{\delta, m}$ is as in Lemma~\ref{lemma:mc-empirical-learning-infinity-norm}, and $\Nmin$ is defined in \eqref{equation:def-random-variable-convenient-notation}.
\begin{proof}
The only change is to consider the time-reversed martingale
\begin{equation}
\begin{split}
\Y_{0} = 0, \Y_{t} = \frac{1}{\sqrt{2}} \left[\pred{X_{t} = i} (\pred{X_{t-1} = j} - \mc \rev (i,j)) \right]_{j}
\end{split}
\end{equation}
and mimic
the proof of Lemma~\ref{lemma:mc-empirical-learning-infinity-norm}.
This yields the claim with probability at least $1 - \delta$.
\end{proof}
\end{corollary}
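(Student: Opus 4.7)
The plan is to reduce to Lemma~\ref{lemma:mc-empirical-learning-infinity-norm} by observing that the time-reversed trajectory is itself a Markov chain governed by $\mc\rev$, so the statistics used to form $\estmc\rev$ are, up to cosmetic relabeling, the natural plug-in counts for the reversed process.

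First I would invoke the standard fact (see e.g.\ \citet[Proposition~1.23]{levin2009markov}) that for a stationary chain $X_1,\ldots,X_m \sim (\mc,\bpi)$, the time-reversed sequence $Y_t \eqdef X_{m-t+1}$ satisfies $(Y_1,\ldots,Y_m) \sim (\mc\rev,\bpi)$, where $\mc\rev(i,j) = \bpi(j)\mc(j,i)/\bpi(i)$ is the definition of the adjoint that appears in the statement. Next I would relate the empirical counts in both directions: by inspection of the indicator variables, the number of visits of $Y$ to state $i$ (over positions $1,\ldots,m-1$) equals $N_i$ (over positions $2,\ldots,m$ of $X$), and the number of $i \to j$ transitions in $Y$ equals the number of $j \to i$ transitions in $X$, i.e.\ $N_{ji}$. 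Therefore the natural plug-in estimator of the transition matrix $\mc\rev$ built from the reversed sample,
\begin{equation*}
\widetilde{\mc\rev}(i,j) \eqdef \frac{N_{ij}^{(Y)}}{N_i^{(Y)}} = \frac{N_{ji}}{N_i} = \estmc\rev(i,j),
\end{equation*}
is exactly the estimator appearing in the corollary (boundary adjustments in $N_i$ of at most one observation are absorbed into constants and do not affect the rate).

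Having identified $\estmc\rev$ as the plug-in estimator for the reversed Markov chain, I would apply Lemma~\ref{lemma:mc-empirical-learning-infinity-norm} verbatim to $(Y_1,\ldots,Y_m)$: construct, for each row index $i \in [d]$, the vector-valued martingale difference sequence
\begin{equation*}
\Y_t\rev \eqdef \tfrac{1}{\sqrt{2}} \sum_{j \in [d]} \bigl( \pred{Y_{t-1} = i}(\pred{Y_t = j} - \mc\rev(i,j)) \bigr)\e_j,
\end{equation*}
verify via the Markov property that $\E[t-1]{\Y_t\rev} = \zero$, bound the quadratic variation $\nrm{\mathbf{\Sigma}_m}_2 \leq N_i^{(Y)} = N_i$ as in the original argument, and then invoke Corollary~\ref{corollary:freedman-matrix-compacted} followed by a union bound over $i \in [d]$ with H\"older's inequality to convert the $\ell_2$ control into $\ell_1$ control on each row, yielding the claimed $4\loglog{\delta/d}{m}\sqrt{d/\Nmin}$ bound in $\nrm{\cdot}_\infty$.

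The only genuine subtlety, and the place where the proof differs from a direct quotation of the preceding lemma, is the initial distribution: time-reversal behaves canonically only when the original chain is stationary. Under the stationarity convention used throughout this section, the identification above is exact; for a non-stationary start, the standard device is to pay a $\sqrt{2\nrm{\bmu/\bpi}_{2,\bpi}}$ factor via \citet[Theorem~3.10]{paulin2015concentration} as in Section~\ref{section:pseudo-sg-ub}. Beyond that, the proof is essentially a transcription of Lemma~\ref{lemma:mc-empirical-learning-infinity-norm} with $Y$ in place of $X$ and $\mc\rev$ in place of $\mc$.
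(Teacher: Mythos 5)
Your proposal is correct and follows essentially the same route as the paper: reverse the (stationary) trajectory so that it is a Markov chain with kernel $\mc\rev$, identify $\estmc\rev$ with the plug-in counts of the reversed chain, and rerun the martingale/Freedman argument of Lemma~\ref{lemma:mc-empirical-learning-infinity-norm} verbatim. Your extra remarks on the count identification (up to boundary terms) and on stationarity being the point where time-reversal is canonical are consistent with, and slightly more explicit than, the paper's own proof.
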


The next lemma
provides
a perturbation bound for the stationary distribution of a reversible Markov chain
in terms of the symmetrized estimator of the absolute spectral gap.

\begin{corollary}
\label{corollary:reversible-kappa-control}
Let $X_1, \dots, X_m \sim \mc$ an ergodic reversible Markov chain with stationary distribution $\bpi$ minorized by $\pimin$ and absolute-spectral gap $\asg$, and let $\estmc, \estpi$ the normalized-count estimators for $\mc, \bpi$. Then
\begin{equation}
\begin{split}
\nrm{\estbpi - \bpi}_\infty \leq \frac{C_\mathcal{K}}{\estasg} \ln \left( 2 \sqrt{\frac{2m}{\Nmin}} \right) \left( \nrm{\estmc - \mc}_\infty + \nrm{\estmc \rev - \mc \rev }_\infty \right),
\end{split}
\end{equation}
where $C_\mathcal{K}$ is the universal constant in Lemma~\ref{lemma:control-visits-all-k-skipped-chains}, and $\estasg \eqdef \asg\left( \frac{1}{2} \left( \estmc + \estmc \rev \right) \right) $, the absolute spectral gap of the additive reversiblization of $\estmc$.
\end{corollary}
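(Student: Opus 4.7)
The plan is to reduce the reversible case to the already-established non-reversible perturbation bound of Lemma~\ref{lemma:non-reversible-kappa-control} by routing the comparison through the \emph{additive reversiblization} $\tilde{\mc} \eqdef \tfrac{1}{2}(\estmc + \estmc\rev)$. Since $\estmc\rev$ is the adjoint of $\estmc$ in $\ell^2(\estbpi)$, both $\estmc$ and $\estmc\rev$ are stochastic with stationary distribution $\estbpi$ (up to the harmless boundary term $\pred{X_m=j}-\pred{X_1=j}$ already standard in this literature), so their convex combination $\tilde{\mc}$ is a stochastic matrix stationary at $\estbpi$, and moreover reversible by the symmetry $\estbpi(i)\tilde{\mc}(i,j)=\estbpi(j)\tilde{\mc}(j,i)$ obtained by averaging detailed balance for $\estmc$ and $\estmc\rev$.

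With this in hand, I would apply Lemma~\ref{lemma:non-reversible-kappa-control} to the two reversible, ergodic chains $\mc_1 = \tilde{\mc}$ and $\mc_2 = \mc$. Because both are reversible, Lemma~\ref{lemma:pssg-reversible} gives $\pssg^{\mathscr{S}}(\tilde{\mc}) = \asg(\tilde{\mc}) = \estasg$ and $\pssg^{\mathscr{S}}(\mc) = \asg$, so the leading factor satisfies
\begin{equation*}
\frac{1}{\max\{\pssg^{\mathscr{S}}(\tilde{\mc}),\pssg^{\mathscr{S}}(\mc)\}} \;=\; \frac{1}{\max\{\estasg,\asg\}} \;\leq\; \frac{1}{\estasg}.
\end{equation*}
The logarithmic factor in Lemma~\ref{lemma:non-reversible-kappa-control} is then bounded via
$\max\{\estpimin,\pimin\}\geq \estpimin = \Nmin/(m-1) \geq \Nmin/m$,
yielding $\ln(2\sqrt{2/\max\{\estpimin,\pimin\}}) \leq \ln(2\sqrt{2m/\Nmin})$, which matches the target expression.

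For the perturbation term, I use reversibility of $\mc$ (so $\mc = \mc\rev$) to write
\begin{equation*}
\tilde{\mc}-\mc \;=\; \tfrac{1}{2}(\estmc - \mc) + \tfrac{1}{2}(\estmc\rev - \mc\rev),
\end{equation*}
and then the triangle inequality in $\nrm{\cdot}_\infty$ gives
$\nrm{\tilde{\mc}-\mc}_\infty \leq \tfrac{1}{2}\nrm{\estmc-\mc}_\infty + \tfrac{1}{2}\nrm{\estmc\rev-\mc\rev}_\infty$.
The factor $1/2$ is absorbed into the universal constant $C$, completing the chain of inequalities.

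The only genuinely delicate step is justifying that $\tilde{\mc}$ really does have $\estbpi$ as its stationary distribution and is ergodic, so that Lemma~\ref{lemma:non-reversible-kappa-control} is applicable; this is a structural check on empirical detailed balance (and on the event that $\Nmin>0$, which is implicit in the bound through its appearance in the denominator). Everything else is an essentially mechanical combination of Lemma~\ref{lemma:non-reversible-kappa-control}, Lemma~\ref{lemma:pssg-reversible}, and the triangle inequality.
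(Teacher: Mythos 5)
Your proposal is correct and follows essentially the same route as the paper: both arguments pass through the additive reversiblization $\tfrac{1}{2}(\estmc + \estmc\rev)$, whose stationary distribution is $\estbpi$, apply the perturbation bound underlying Lemma~\ref{lemma:non-reversible-kappa-control} together with Lemma~\ref{lemma:pssg-reversible} to replace the (dilated) pseudo-spectral gap by $\estasg$ and to bound the logarithmic factor via $\estpimin \geq \Nmin/m$, and then split $\nrm{\tfrac{1}{2}(\estmc+\estmc\rev) - \mc}_\infty$ by the triangle inequality using reversibility of $\mc$. The caveats you flag (empirical detailed balance up to the boundary term, and ergodicity of the reversiblized empirical chain on the event $\Nmin>0$) are the same ones implicitly present in the paper's proof.
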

\begin{proof}
  As $\estbpi$ is also the stationary distribution of $\frac{1}{2} \left( \estmc + \estmc \rev \right)$
  we have the
  the perturbation bound
  $\nrm{\estbpi - \bpi}_\infty \leq \hat{\kappa} \nrm{\frac{1}{2} \left( \estmc + \estmc \rev \right) - \mc}_\infty$,
  where
  $\hat{\kappa} = \kappa \left( \frac{1}{2} \left( \estmc + \estmc \rev \right) \right)$.
  By reversibility of $\mc$ and norm sub-additivity,
\begin{equation}
\begin{split}
\nrm{\estbpi - \bpi}_\infty \leq \hat{\kappa} \frac{1}{2} \left( \nrm{  \estmc - \mc }_\infty + \nrm{  \estmc \rev - \mc \rev }_\infty \right).
\end{split}
\end{equation}
The proof is concluded by invoking
Lemmas~\ref{lemma:non-reversible-kappa-control} and~\ref{lemma:pssg-reversible}.
\end{proof}

\section{Results from the literature}

\begin{lemma}[from \citealp{kazakos1978bhattacharyya}]
\label{lemma:mc-hellinger-recursive}
Let $(\mc_0, \bmu_0)$ and $(\mc_1, \bmu_1)$ two $d$-state Markov chains, and define
\beq
\left[ \bmu_0, \bmu_1  \right]_{\surd} \eqdef \left[ \sqrt{\bmu_0(i) \bmu_1(i)}  \right]_{i \in [d]} \text{ and }
\left[ \mc_0, \mc_1  \right]_{\surd}
\eqdef \left[ \sqrt{\mc_0(i,j) \mc_1(i,j)}  \right]_{(i,j) \in [d]^2}.
\eeq
Then for $\X = (X_1, \dots, X_m), \Y = (Y_1, \dots, Y_m)$ two trajectories of length $m$ sampled respectively from two $(\mc_0, \bmu_0)$ and $(\mc_1, \bmu_1)$, it holds that
\beq
1 - H^2(\X, \Y) = \left[ \bmu_0, \bmu_1  \right]_{\surd} \trn \cdot \left(  \left[ \mc_0, \mc_1  \right]_{\surd}\right)^m \cdot \unit.
\eeq
\end{lemma}

\begin{theorem}[Rectangular Matrix Freedman, {\citet[Corollary~1.3]{tropp2011freedman}}]
\label{theorem:matrix-freedman}
Consider a matrix martingale $\set{\X_t : t = 0, 1, 2, \dots}$ whose values are matrices with dimension $d_1 \times d_2$, and let $\set{\Y_t : t = 1, 2, 3, \dots}$ be the difference sequence. Assume that the difference sequence is uniformly bounded with respect to the spectral norm, i.e. $\exists R > 0$,
$$\nrm{\Y_t}_2 \leq R \text{ almost surely for }  t = 1,2, \dots$$
Define two predictable quadratic variation processes for this martingale:
\begin{equation}
\begin{split}
\W_{\col, m} \eqdef \sum_{t=1}^{m} \E[t-1]{ \Y_t \Y_t \trn} \text{ and } \W_{\row, m} \eqdef \sum_{t=1}^{m} \E[t-1]{ \Y_t \trn \Y_t} \text{ for } m = 1,2,3, \dots , \\
\end{split}
\end{equation}
and write $\nrm{\mathbf{\Sigma}_m}_2 = \max \set{ \nrm{\W_{\col, m}}_2, \nrm{\W_{\row, m}}_2 }$. Then, for all $\eps \geq 0$ and $\sigma^2 > 0$,
\begin{equation}
\begin{split}
\PR{\max_{m \geq 0} \nrm{\sum_{t=1}^{m}\Y_t}_2 > \eps \text{ and } \nrm{\mathbf{\Sigma}_m}_2 \leq \sigma^2} &\leq (d_1 + d_2) \expo{-\frac{\eps^2/2}{\sigma^2 + R\eps/3}} 
\end{split}
\end{equation}
\end{theorem}

\end{document}